\documentclass[12pt,reqno,a4paper]{amsart}
\usepackage[utf8]{inputenc}
\usepackage[T1]{fontenc}
\usepackage{amssymb}
\usepackage{pgf,tikz,pgfplots}
\pgfplotsset{compat=1.12}
\usepackage{mathrsfs}
\usetikzlibrary{arrows}
\usepackage{bm}
\usepackage[alpine]{ifsym}
\usepackage{xpicture}
\usepackage{calculator}
\usepackage{graphicx}
\usepackage{enumerate}
\usepackage{enumitem}
\usepackage{acronym}
\usepackage{xspace}
\usepackage{xfrac}    
\usepackage{faktor} 
\usepackage{caption}
\usepackage{subcaption}

\textwidth=15cm \textheight=22cm \topmargin=0.5cm
\oddsidemargin=0.5cm \evensidemargin=0.5cm
\usepackage[a4paper,top=3.3cm,bottom=3cm,left=3cm,right=3cm,bindingoffset=5mm]{geometry}

\usepackage{float}
\usepackage{color}

\usepackage[colorlinks=true]{hyperref}

\usepackage[normalem]{ulem}

\newtheorem{theorem}{Theorem}[section]
\newtheorem{lemma}[theorem]{Lemma}
\newtheorem{corollary}[theorem]{Corollary}
\newtheorem{proposition}[theorem]{Proposition}

\theoremstyle{definition}
\newtheorem{example}[theorem]{Example}
\newtheorem{remark}[theorem]{Remark}
\newtheorem{definition}[theorem]{Definition}
\newtheorem{algorithm}[theorem]{Algorithm}

\numberwithin{equation}{section}

\newcommand{\Z}{\mathbb{Z}}
\newcommand{\Q}{\mathbb{Q}}
\newcommand{\R}{\mathbb{R}}
\newcommand{\C}{\mathbb{C}}
\newcommand{\PP}{\mathbb{P}}

\def\fd{\mathbb{F}_\delta}

\def\fol{$\mathcal{F}$\xspace}
\def\folc{$\mathcal{F}^{\C^2}$\xspace}
\def\folp{$\mathcal{F}^{\PP^2}$\xspace}
\def\fold{$\mathcal{F}^{\delta}$\xspace}
\def\pf{\mathcal{P}_\mathcal{F}}

\def\df{D_\mathcal{F}}

\def\bf{\mathcal{B}_\mathcal{F}}

\def\zf{Z_\mathcal{F}}
\def\tf{T_{\mathcal{F}}}

\def\ipr{rational first integral\xspace}

\newcommand{\sn}{{\sum_{i=1}^n}}


\newcommand{\foldelta}{{\mathcal{F}^{\delta}}}
\newcommand{{\estrictfoldelta}}{{\widetilde{\mathcal{F}}}^{\delta}}




\title[Algebraic integrability with bounded genus]{Algebraic integrability with bounded genus}
\author[C. Galindo]{Carlos Galindo}
\address{Universitat Jaume I, Campus de Riu Sec, Departamento de Matem\'aticas \& Institut Universitari de Matem\`atiques i Aplicacions de Castell\'o, 12071
Caste\-ll\'on de la Plana, Spain.}\email{galindo@uji.es}   \email{callejo@uji.es}

\author[F. Monserrat]{Francisco Monserrat}
\address{Universitat Politècnica de València, Departament de Matemàtica Aplicada \& Institut Universitari de Matemàtica Pura i Aplicada,
Camí de Vera s/n, 46022 València (Spain).}
\email{framonde@mat.upv.es}

\author[E. P\'erez-Callejo]{Elvira P\'erez-Callejo}


\subjclass[2010]{Primary: 32S65, 34C05, 14C20}
\keywords{Rational first integrals; Hirzebruch surfaces}
\thanks{Partially supported by grants PID2022-138906NB-C22, TED2021-130358B-I00 and PRE2019-089907 funded by MCIN/AEI/10.13039/501100011033 and by “European Union NextGenerationEU/PRTR”, as well as by grants GACUJIMB/2023/03 and UJI-B2021-02 funded by Universitat Jaume I}

\begin{document}
\begin{abstract}
We provide an algorithm which decides whether a polynomial foliation \folc on the complex plane has a polynomial first integral of genus $g\neq 1$. Except in a specific case, an extension of the algorithm also decides if \folc has a rational first integral of that genus.
\end{abstract}

\maketitle

\section{Introduction}

At the end of the 19th century Darboux \cite{dar}, Poincaré \cite{poi11,poi12,poi13,poi21}, Painlevé \cite{pai} and Autonne \cite{aut} addressed the problem of solving complex planar polynomial differential systems. A natural question they asked was to characterize those systems such that every orbit of the attached vector field (or differential $1$-form) is algebraic. Equivalently, they wanted to decide whether a vector field as above has a rational first integral or, in other words, whether it is algebraically integrable. Of course, once confirmed the existence of a rational first integral, they also desired to compute it. 

Algebraic integrability presents the advantage that when a differential equation has a (rational) first integral, the study of its dynamics is reduced by a dimension. In addition, it is related to the second part of the XVI Hilbert problem \cite{lli2,lli}, to the Einstein's field equations in general relativity \cite{hew} and to the center problem for quadratic vector fields \cite{sch,cha-lli,lli2,lli}.

Nowadays the algebraic integrability problem for complex planar polynomial vector fields remains unsolved. To try to solve it, Poincaré proposed to give an upper bound on the degree of a general invariant curve and, in the strongest version, it is desirable that this bound depends only on the degree of the vector field. A complex planar vector field defines a foliation on the (affine) complex plane which can be extended to the complex projective plane. To solve the algebraic integrability problem for this last class of foliations is equivalent to solve it for the affine case; many of the recent  developments on algebraic integrability have been obtained in this context. The bound corresponding to the above mentioned strong version does not exist in general, even though the analytic types and the quantity of singularities of the foliation are fixed (see \cite{l-n}). 
However, an upper bound is known when the reduction of the singularities of the foliation on the complex projective plane has only one dicritical exceptional divisor \cite{GalMon2014}. Other papers dealing with algebraic integrability or the problem classically known as \emph{Poincaré problem} (which consists of giving an upper bound on the degree of the irreducible algebraic invariant curves in terms of the degree of the foliation and/or other data, without necessarily assuming algebraic integrability) are \cite{car, ca-ca,c-l,ce-li,es-kl,GalMon2010,pere,PerSva,soa2,Walcher,zam1,zam2}.

One of the natural ingredients to be added in the algebraic integrability problem is the (geometric) genus $g$ of the (normalization of) a general invariant algebraic curve. The genus of a non-singular projective variety $X$ is defined as $g_X=h^0(X,K_X)=\dim H^0(X,K_X)$, $K_X$ being the canonical sheaf of $X$. It is a birational invariant and extremely important for the classification problem \cite[Chapter II, Remark 8.18.3]{Har}.

Painlevé posed the problem of recognizing this value $g$ for an algebraically integrable foliation on the projective plane and, again, Lins-Neto in \cite{l-n} gave a negative answer by giving families of algebraically integrable foliations with analytic type fixed and genus arbitrarily large.

Returning to the idea of bounding the degree $d$ of a general invariant curve but also considering the genus, in \cite{pere} is proposed a bound on $d$ for foliations \folp of general type on the complex projective plane depending on $r$ (the degree of the foliation), $g$ and the sequence $\{h^0(\PP^2,K^{\otimes m}_{\mathcal{F}^{\PP^2}})\}_{m>0}$, where $K_{\mathcal{F}^{\PP^2}}$ is the canonical sheaf of \folp. This result was refined in \cite{PerSva}, where it is provided a enormous bound on $d$ for foliations \folp birationally equivalent to non-isotrivial fibrations of genus $\geq 2$. The bound only depends on $r$ and $g$.

The term \emph{effective algebraic integration} has been coined in the last decades and it groups the methods/algorithms to decide whether a planar vector field is algebraically integrable.

\emph{The \textbf{main results} in this paper are Algorithm \ref{algoritmo3} and Corollary \ref{rr2}. Algorithm \ref{algoritmo3} is an effective algebraic integration algorithm for a polynomial foliation \folc on the complex plane. It considers  an extended foliation \fol of \folc that one choose either on the projective plane $\PP^2$ or on any Hirzebruch surface $\fd$. Its input is the foliation \fol, a part of the reduction of the singularities of \fol (the dicritical configuration) and the genus $g\neq 1$, and the output (if a computable condition is not met), a rational first integral of genus $g$ if it exists. Corollary \ref{rr2} (together with Remark \ref{rr1}) shows that when one desires to know whether the foliation \folc has a rational first integral with fixed irreducible factors in the denominator or, even more interesting, whether \folc has a polynomial first integral (always of genus $g\neq 1$), the algorithm does not depend on any condition.}

Our algorithm is an effective algebraic integration algorithm for both foliations on the projective plane and on any Hirzebruch surface.

Taking a short tour on the algorithms on effective algebraic integration, many of them operate with bounded degree (of a general algebraic invariant curve) \cite{pr-si,man2,dua,d-l-a,fergia,cheze,bost}.

Invariant algebraic curves are important objects in most of the previous algorithms. An interesting fact is the result by Jouanolou \cite{Joua} that improves a result by Darboux \cite{dar} and states that, if our vector field has at least $\binom{d+1}{2}+2$ invariant algebraic curves, it is algebraically integrable and, from those curves, one can compute a rational first integral. In \cite{GalMon2006} it is given an algorithm that decides about algebraic integrability of a foliation on the projective plane (and computes a rational first integral in the affirmative case) when the cone of curves of the surface obtained by blowing-up at the dicritical configuration of the foliation is polyhedral. This algorithm allows us to compute a certain number of invariant algebraic curves which leads us to find the rational first integral.

As mentioned, we are mainly interested in complex planar polynomial differential systems which can be studied from the foliation \folc on the complex plane defined by the vector field attached to the differential system. As in the case of the complex projective plane, \folc also admits an extension to a foliation \fold on any Hirzebruch surface $\fd$ and Algorithm \ref{algoritmo} shows how to get \fold from \fol. Foliations \fold have an important role in this paper. 

Section \ref{secPrelim} contains information we will use later. Among other things, we briefly recall the concepts of foliation (Subsection \ref{secFol}) and Hirzebruch surface (Subsection \ref{secHirzSurf}) as well as an important input in our main algorithm, the dicritical configuration of a foliation, which is a part of the configuration involved in the reduction of its singularities \cite{Seiden}.

Let $S_0$ denote either the complex projective plane $\PP^2$ or a Hirzebruch surface $\fd$ and \fol a foliation on $S_0$, which can be regarded as the extension to $S_0$ of a foliation \folc as before. Section \ref{sec_ipr} is the core of the paper where we state the results which explain why Algorithm \ref{algoritmo3} works in the problem of effective integration with genus different from one. The key object here, given in (\ref{divisor}), is the divisor $\df$ on the surface $Z_{\mathcal{F}}$ obtained after blowing-up at the dicritical configuration $\mathcal{B}_{\mathcal F}$ of \fol (see (\ref{www})). When \fol is algebraically integrable, one can find a rational first integral from the complete linear system $|\df|$, which has projective dimension one. Subsection \ref{rfi} explains this fact and some interesting properties of $\df$ which are useful for our purposes, like $\df^2=0$ or $\df\cdot C=0$ for any {\fol-}invariant curve.

Then, what one desires is to find a candidate to be $\df$ in order to check out algebraic integrability. Subsection \ref{subsec_alg_int}, apart from Algorithm \ref{algoritmo3} and Corollary \ref{rr2}, also contains Algorithm \ref{algoritmo2} which decides about algebraic integrability on any foliation \fol under certain assumptions on a concrete $\Q$-divisor $T_{\alpha_{\mathcal F}^\Sigma}$ on $Z_{\mathcal F}$. Both algorithms have, as input, a restricted set $\Sigma$ of independent algebraic solutions (Definition \ref{def_restricted}), which is a suitable finite (even empty) set of integral invariant curves.

Our reasoning looks for candidates to be the $\Q$-divisor $T_{\mathcal F}=\frac{1}{b}\df$ introduced in (\ref{eqn_def_tf}), $b$ being a specific positive integer. When the foliation admits a rational first integral, the class $[T_{\mathcal F}]$ in the Néron-Severi space of $Z_{\mathcal F}$ is orthogonal to the classes in the set $V(\Sigma)$ introduced before Definition \ref{indep}. $V(\Sigma)$ contains the classes of the elements in $\Sigma$ and the strict transforms of the non-dicritical exceptional divisors. 

Assume now that we consider any non-necessarily algebraically integrable foliation \fol. The inputs in Algorithm \ref{algoritmo3} (from which we deduce Corollary \ref{rr2}) allow us to obtain a family $\{T_\alpha\}$ of candidates to be $T_{\mathcal F}$, where $\alpha$ is a parametrized vector on $\ell$ parameters, $\ell$ depending on the number of dicritical exceptional divisors and the cardinality of $\Sigma$. In addition $T_\alpha^2$ must vanish since $T_{\mathcal{F}}^2=0$. Then, with the help of Theorem \ref{teo1}, we prove in Lemma \ref{lemm} that the map $\alpha\rightarrow T_\alpha^2$ has an absolute maximum, only reached at $\alpha_{\mathcal{F}}^\Sigma$, value we considered before, which will give the candidate $T_{\alpha_{\mathcal F}^\Sigma}$ we use in our algorithms. Our main theorem, Theorem \ref{teo2}, states that if $T^2_{\alpha_{\mathcal F}^\Sigma}<0$, \fol is not algebraically integrable and if it is algebraically integrable and $T^2_{\alpha_{\mathcal F}^\Sigma}=0$, then $T_{\mathcal F}=T_{\alpha_{\mathcal F}^\Sigma}$.

Section \ref{sec_ipr} ends with our algorithm in bounded genus $g\neq 1$, Algorithm \ref{algoritmo3}, where we use the previous considerations and the case $T^2_{\alpha_{\mathcal F}^\Sigma}>0$ is also contemplated. We always get an answer except when certain inequality $p_{\inf}\cdot p_{\sup}\leq 0$ holds, in this case the algorithm gives no conclusive answer. However, Corollary \ref{rr2} shows that a minor modification always stops if one looks for a polynomial first integral of \folc. This is a particular case of that contemplated in Remark \ref{rr1}, where it is checked algebraic integrability (with bounded genus) for rational first integrals of the type $f/(f_1^{a_1}\cdots f_r^{a_r})$, where $f_1,\ldots,f_r$ are fixed polynomials and $a_i\in\Z_{>0}$.

It is worthwhile to add that our algorithms to decide about algebraic integrability on the complex plane are quite versatile. This is because one can choose to consider extensions to the projective plane or any Hirzebruch surface and, in each case, one can pick different restricted sets of independent algebraic solutions depending on the invariant curves one knows.

Section \ref{sec_examples} concludes the paper by giving several examples that show how our algorithms work. Examples \ref{ex_noip} and \ref{ex_ip1} give answer by using Algorithm \ref{algoritmo2}, Example \ref{ex_ip2} uses Algorithm \ref{algoritmo3} and Example \ref{ex_ip3} corresponds to Corollary \ref{rr2}. We have not been able to find any foliation satisfying the condition under which our algorithm gives no conclusive answer.

\section{Preliminaries} \label{secPrelim}

In this paper we study complex planar polynomial differential systems through their associated vector fields or  differential $1$-forms. Specifically, we are interested in algebraic integrability of polynomial foliations on the complex plane. We address this problem by extending our foliations to foliations on the projective plane or on Hirzebruch surfaces. In this section we recall some definitions and facts we will need. We start with the concept of foliation on a surface and the reduction procedure of its singularities. The last part of the section deals with Hirzebruch surfaces, foliations on these surfaces (or on the projective plane) and their algebraic integrability.

\subsection{Foliations}\label{secFol}

We recall one of the possible definitions for \emph{singular holomorphic foliation} (\emph{foliation}, in the sequel) and some related concepts and results.

Let $S$ be a smooth complex surface. A (singular holomorphic) \textit{foliation} $\mathcal{F}$ on $S$ is defined by a family of pairs $\{(U_i,v_i)\}_{i\in I}$, where $\{U_i\}_{i\in I}$ is an open covering of $S$  and, for all $i\in I$, $v_i$ is a non-vanishing holomorphic vector field on $U_i$ such that for any $i,j\in I$:
\begin{gather*}
v_i=g_{ij}v_j \text{ on }U_i\cap U_j\text{ for some element }g_{ij}\in\mathcal{O}_{S}(U_i\cap U_j)^*,
\end{gather*}
$\mathcal{O}_S$ being the sheaf of holomorphic functions on $S$.

A foliation is not exactly the collection $\{(U_i,v_i)\}_{i\in I}$, but an equivalence class: we identify two foliations given by collections $\{(U_i,v_i)\}_{i\in I}$ and $\{(U'_i,v'_i)\}_{i\in I}$ as above whenever $v_i$ and $v'_i$ coincide on $U_i\cap U'_i$ up to multiplication by a nowhere vanishing holomorphic function.

The functions $g_{ij}\in \mathcal{O}_{S}(U_i\cap U_j)^*$ define a commutative cocycle and, therefore, a class in $H^1(S,{\mathcal O}^*_S)$, that is, a line bundle on $S$. The sheaf associated to this line bundle is called the \emph{canonical sheaf} of $\mathcal F$ and will be denoted by ${\mathcal K}_{\mathcal F}$.

Given a point $p\in U_i$, the \textit{algebraic multiplicity} of $\mathcal{F}$ at $p$,  $\nu_p(\mathcal{F})$, is the order of the vector field $v_i$ at $p$. It is $t$ if and only if $(v_i)_p\in \mathcal{M}_p^t \Theta_{S,p}$ and $(v_i)_p\notin \mathcal{M}_p^{t+1} \Theta_{S,p}$, $\mathcal{M}_p$ (respectively, $\Theta_{S,p}$) being the maximal ideal of the local ring $\mathcal{O}_{S,p}$ (respectively, the tangent sheaf of $S$ at $p$). The \emph{singularities} of $\mathcal{F}$ are the points $p$ where $\nu_p(\mathcal{F})\geq 1$. In this paper any considered foliation has isolated singularities.

A foliation $\mathcal{F}$ on $S$ can also be defined by using $1$-forms. Indeed, it is given by a family $\{(U_i,\omega_i)\}_{i\in I}$ where $\{U_i\}_{i\in I}$ is, as above, an open covering of $S$ and, for all $i\in I$, $\omega_i$ is a non-zero regular differential $1$-form on $U_i$ such that, for any $i,j\in I$:
\begin{gather*}
\omega_i=f_{ij}\omega_j \text{ on }U_i\cap U_j\text{, for some element }f_{ij}\in\mathcal{O}_{S}(U_i\cap U_j)^*.
\end{gather*}

Let $C$ be a curve on $S$ and $\mathcal F$ a foliation on $S$ defined by a family $\{(U_i,v_i)\}_{i\in I}$ (respectively,$\{(U_i,\omega_i)\}_{i\in I}$ ) as above. Assume that $f_i=0$ is the equation of $C$ on $U_i$ for all $i\in I$. $C$ is said to be \emph{invariant} by \fol if, for all $i\in I$, it holds that $v_i(f_i)=h_if_i$ (respectively, $w_i\wedge df_i=f_i \eta_i$) for some regular function $h_i$ (respectively, differential $2$-form $\eta_i$) on $U_i$. We also say that $C$ is {\fol-}invariant.

\subsection{Configurations of infinitely near points}\label{subsec_conf}

Let $p$ be a point on a smooth complex surface $S$ and consider the blowup of $S$ centered at $p$, $\pi_p:{\rm Bl}_p(S)\rightarrow S$. The points in the exceptional divisor $E_{p}=\pi_p^{-1}(p)$ are named points in the \emph{first infinitesimal neighbourhood} of $p$. Assuming that we blow up at some points in $E_p$, the set of points in the new created exceptional divisors is called the \emph{second infinitesimal neighbourhood} of $p$. Inductively one defines the \emph{$k$th infinitesimal neighbourhood} of $p$, $k\geq 1$. A point $q$ is \emph{infinitely near} $p$ (denoted $q\geq p$) if either $q=p$ or it belongs to some $k$th infinitesimal neighbourhood of $p$. Finally, a point $q$ is \emph{infinitely near} $S$ if it is infinitely near a point in $S$. In addition, given two infinitely near $S$ points $p$ and $q$, $q$ \emph{is proximate to} $p$ (denoted $q\rightarrow p$) if $q$ belongs to $E_p$ or to any of its strict transforms. When $q$ is proximate to $p$ and $q\notin E_p$, $q$ is named \emph{satellite} and, otherwise, it is called \emph{free}. 

Consider a finite sequence
\begin{equation}\label{seq}
X_{n+1}\overset{\pi_n}{\rightarrow}X_{n}\overset{\pi_{n-1}}{\rightarrow}\cdots\overset{\pi_2}{\rightarrow}X_2\overset{\pi_1}{\rightarrow}X_1=S
\end{equation}
where, for all $i\in \{1,\ldots,n\}$, $p_i\in X_i$ and $\pi_i:X_{i+1}:={\rm Bl}_{p_i}(X_i)\rightarrow X_i$ is the blowup of $X_i$ centered at $p_i$. The set ${\mathcal C}=\{p_1,\ldots,p_n\}$ given by the centers of a finite sequence as (\ref{seq}) is called a \emph{configuration} (of infinitely near $S$ points). 

The \emph{proximity graph} of $\mathcal C$, denoted $\Gamma_{\mathcal C}$, is a labelled graph that provides a visual display of the proximity relations among the points in $\mathcal C$. It is constructed from bottom to top. The vertices at the bottom level, level $0$, correspond to the points of ${\mathcal C}\cap S$. Level $1$ corresponds to the points in $\mathcal C$ belonging to the first infinitesimal neighbourhoods of the points in ${\mathcal C}\cap S$; similarly, for $k\geq 2$, the vertices in the $k$th level correspond to the elements of $\mathcal C$ belonging to the $k$th infinitesimal neighbourhood of the points in ${\mathcal C}\cap S$. A vertex $p_i$ in a certain level is connected by an edge to a vertex $p_j$ in a higher level whenever $p_j$ is proximate to $p_i$. Each vertex is labelled $p_i$ according to the point in ${\mathcal C}$ it represents.

A configuration ${\mathcal C}$ is a \emph{chain} if $\geq$ defines a total order in $\mathcal C$, that is, there is only one vertex in each level of the proximity graph $\Gamma_{\mathcal C}$.

Finally, for any point $p_\ell\in{\mathcal C}$ the \emph{complete chain} associated to $p_\ell$ is the set
$$({\mathcal C})^{p_\ell}:=\{p_i\in {\mathcal C}\mid p_\ell \geq  p_i\}.$$

\subsection{Reduction of singularities}\label{subsec_red_sing}

Let $\mathcal{F}$ be a foliation on a smooth complex surface $S$. Let $p$ be a singularity of \fol. Assume that $\mathcal{F}$ is given at $p$, in local coordinates $x$ and $y$, by a local differential $1$-form $\omega =A(x,y) dx + B(x,y) dy=\sum_{k=m}^\infty \omega_k$, where  $\omega_k$ is the $k$th homogeneous jet of $\omega$. Then , the first non-vanishing jet of $\omega$ is $\omega_m :=a_m(x,y) dx + b_m(x,y) dy$, where $m=\nu_p(\mathcal{F})$. The point $p$ is a {\it simple singularity} of $\mathcal{F}$ whenever $m=1$ and the matrix
\[
 \left(
    \begin{array}{cc}
      \frac{\partial b_1}{\partial x} & \frac{\partial b_1}{\partial y} \\
     - \frac{\partial a_1}{\partial x} & - \frac{\partial a_1}{\partial y}\\
    \end{array}
  \right)
\]
has  two eigenvalues $\lambda_1, \lambda_2$ such that either the product $\lambda_1 \lambda_2$ does not vanish and their quotient is not a positive rational number, or $\lambda_1 \lambda_2 = 0$ and $\lambda_1^2 + \lambda_2^2$ does not vanish. Non-simple singularities are called {\it ordinary}. If the polynomial $d(x,y):= x a_m(x,y)+y b_m(x,y)$ is equal to zero, then we say that $p$ is a {\it terminal dicritical singularity}.  Notice that any terminal dicritical singularity is also an ordinary singularity.

The ordinary singularities of a foliation $ \mathcal{F}$ (recall that we assume that they are isolated) can be reduced by blowing-up. The following result summarizes well-known facts on reduction of singularities and terminal dicritical singularities (see \cite{Seiden,Bru} and \cite[Theorem 1 and Proposition 1]{fergal1}).

\begin{theorem}
\label{jmaa}
Let $\mathcal{F}$ be a foliation on a smooth complex surface $S$. Then:
\begin{enumerate}
\item There is a sequence of finitely many point blowups
\begin{equation}
Z=X_{n+1}\overset{\pi_n}{\rightarrow}X_{n}\overset{\pi_{n-1}}{\rightarrow}\cdots\overset{\pi_2}{\rightarrow}X_2\overset{\pi_1}{\rightarrow}X_1=S,
\end{equation}
such that the strict transform of $\mathcal{F}$  on $Z$ has no ordinary singularity.
\item A singularity $p$ of $\mathcal{F}$ is not terminal dicritical if and only if the exceptional divisor of the surface {\rm Bl}$_p(S)$ is invariant by the strict transform of $\mathcal{F}$ on {\rm Bl}$_p(S)$.
\end{enumerate}

\end{theorem}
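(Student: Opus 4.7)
I would split the statement into its two parts and treat them independently: part (2) is a local computation in one blowup chart, while part (1) is the classical Seidenberg reduction of singularities, which I would obtain by noetherian induction on a suitably chosen numerical invariant attached to each ordinary singularity.

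For part (2), I would work in local analytic coordinates $(x,y)$ centered at $p$, write $\omega = A(x,y)\,dx + B(x,y)\,dy$ with first non-vanishing jet $\omega_m = a_m(x,y)\,dx + b_m(x,y)\,dy$ where $m = \nu_p(\mathcal{F})$, and use the standard chart $(x,t)$ of ${\rm Bl}_p(S)$ defined by $y = tx$, in which $E_p = \{x = 0\}$. Substituting $dy = t\,dx + x\,dt$ and expanding,
\[
\pi_p^*\omega = x^{m}\bigl[(a_m(1,t) + t\,b_m(1,t))\,dx + x\,b_m(1,t)\,dt\bigr] + \text{(terms of higher order in $x$)}.
\]
The coefficient $a_m(1,t)+t\,b_m(1,t)$ is exactly $d(1,t)$ for $d(x,y)=xa_m+yb_m$. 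If $d\not\equiv 0$ (so $p$ is not terminal dicritical), I would divide by $x^m$ to obtain the strict transform $\widetilde\omega$; its $dt$-coefficient $x\,b_m(1,t)+\cdots$ vanishes on $x=0$, so $\widetilde\omega\wedge dx$ is divisible by $x$ and $E_p$ is $\mathcal{F}$-invariant. If $d\equiv 0$ (so $p$ is terminal dicritical), then $a_m(1,t)+t\,b_m(1,t)\equiv 0$, the $dx$-coefficient gains an extra factor of $x$, the strict transform is obtained by dividing by $x^{m+1}$, and its $dt$-coefficient restricts on $E_p$ to $b_m(1,t)$, which is not identically zero; hence $E_p$ is not $\mathcal{F}$-invariant. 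A symmetric computation in the chart $x = sy$ handles the remaining point of $E_p$, and part (2) follows.

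For part (1), I would follow Seidenberg's strategy and attach to every ordinary singularity $p$ a lexicographically ordered numerical invariant $\iota(p)$, built from $\nu_p(\mathcal{F})$ together with combinatorial data of the tangent cone $\omega_m$ (typically the multiplicities of the distinct linear factors of $d(x,y)$ in the non-dicritical case, and of $b_m(1,t)$, $a_m(s,1)$ in the terminal dicritical case). The heart of the argument is to show that, for every ordinary singularity $p'$ of the strict transform of $\mathcal{F}$ on ${\rm Bl}_p(S)$ lying on $E_p$, one has $\iota(p') < \iota(p)$. By noetherian induction, finitely many blowups then eliminate all ordinary singularities, producing the desired $Z$.

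The main obstacle is the choice of $\iota$: one needs a single invariant that strictly decreases in both alternatives of part (2). In the non-dicritical branch, $E_p$ becomes invariant and the new singularities on it are governed by the roots of $d(1,t)$, so $\nu_p(\mathcal{F})$ already drops or the combinatorial factor refines. In the terminal dicritical branch $\nu_p(\mathcal{F})$ may be preserved by some of the at most $m+1$ new singularities, and here the argument is more delicate; I would measure the multiplicities of the roots of $b_m(1,t)$ and show that these give a strict decrease in the lexicographic tuple. This is precisely the analysis carried out in the references cited in the statement, and part (2) above is the key input that determines which of the two refinements of $\iota$ applies after each blowup.
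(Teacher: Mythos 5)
The paper does not prove this theorem at all: it is presented as a summary of well-known facts and the proof is delegated entirely to the cited references (Seidenberg, Brunella, and Theorem~1 and Proposition~1 of the Fern\'andez--Galindo paper). So any proof you supply is already more than the paper offers. Your treatment of part (2) is a correct and essentially complete local computation: the identification of the $dx$-coefficient of $\pi_p^*\omega$ with $x^m\bigl(d(1,t)+O(x)\bigr)$, the case split on whether $d\equiv 0$, the extra division by $x$ in the dicritical case, and the observation that $b_m(1,t)\not\equiv 0$ (since $d\equiv 0$ and $b_m\equiv 0$ would force $\omega_m=0$) are exactly the standard argument, and the second chart $x=sy$ covers the missing point of $E_p$. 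This matches in spirit what Proposition~1 of the cited reference does.

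Part (1), however, is a strategy rather than a proof, and the decisive step is missing. You assert that a lexicographic invariant built from $\nu_p(\mathcal{F})$ and the factor multiplicities of the tangent cone strictly decreases at every ordinary singularity created on $E_p$, but this is precisely the hard content of Seidenberg's theorem and you defer it to the references. In particular, the invariant as you describe it does not obviously handle the ordinary singularities with $\nu_p(\mathcal{F})=1$ — e.g.\ a linearizable saddle with eigenvalue ratio a positive rational $p/q$, or a nilpotent linear part — where the algebraic multiplicity does \emph{not} drop under blowup and the termination argument runs instead on a secondary quantity (essentially a Euclidean-algorithm count on $(p,q)$, or a Milnor-number-type invariant in the nilpotent case). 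Your tuple of ``multiplicities of linear factors of $d(x,y)$'' does not visibly capture this, so the claimed strict decrease is not established. Since the paper itself only cites the literature here, this does not put you behind the paper, but a self-contained proof of part (1) would require constructing and controlling that finer invariant explicitly.
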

A \emph{reduction of singularities} of $\mathcal{F}$ is a minimal (with respect to the number of involved blowups) map  $\pi: Z \rightarrow S$ as in Part ($1$) of Theorem \ref{jmaa}.\medskip

Let $\mathcal{S}_{\mathcal F}=\{p_1,\ldots,p_n\}$ be the configuration of centers of the reduction of singularities of $\mathcal{F}$. $\mathcal{S}_{\mathcal F}$ is called the \emph{singular configuration} of $\mathcal{F}$, and its elements \emph{infinitely near ordinary singularities} of $\mathcal{F}$.

We say that a point $p_i\in \mathcal{S}_{\mathcal{F}}$ is an \emph{infinitely near terminal dicritical singularity} if it is a terminal dicritical singularity of the strict tranform of $\mathcal{F}$ on the surface $p_i$ belongs to. Let  $\mathcal{D}_{\mathcal{F}}$ be the set of infinitely near terminal dicritical singularities and, for each $q\in \mathcal{D}_{\mathcal{F}}$, let us consider the complete chain
$$(\mathcal{S}_{\mathcal{F}})^q=\{p_i\in \mathcal{S}_{\mathcal{F}}\mid \mbox{$q\geq p_i$}\}.$$

The configuration $\mathcal{B}_{\mathcal{F}}:=\cup_{q\in \mathcal{D}_{\mathcal{F}}} (\mathcal{S}_{\mathcal{F}})^q$ is called \emph{dicritical configuration} of $\mathcal{F}$ and, its elements, \emph{infinitely near dicritical singularities}. Finally, the composition of the sequence of blowups centered at the points of $\mathcal{B}_{\mathcal F}$ will be named \emph{dicritical reduction of singularities} of $\mathcal{F}$.

\subsection{Hirzebruch surfaces}\label{secHirzSurf}

Let $\mathbb{F}_\delta:=\mathbb{P}(\mathcal{O}_{\mathbb{P}^1}\oplus \mathcal{O}_{\mathbb{P}^1}(\delta))$ be the $\delta$th complex Hirzebruch surface, $\delta\geq 0$, where $\mathbb{P}^1$ denotes  the complex projective line. There is a projection morphism $\sigma: \mathbb{F}_{\delta}\rightarrow \mathbb{P}^1$ giving $\fd$ a structure of ruled surface. $\mathbb{F}_{\delta}$ also has a structure of toric variety since it can be regarded as the quotient of $\left(\mathbb{C}^2\setminus \{\mathbf{0}\}\right)\times \left(\mathbb{C}^2\setminus \{\mathbf{0}\}\right)$ by an action of the algebraic torus $\left(\mathbb{C}\setminus \{0\}\right)\times \left(\mathbb{C}\setminus \{0\}\right)$. Considering coordinates $(X_0,X_1;Y_0,Y_1)$ in $\left(\mathbb{C}^2\setminus \{\mathbf{0}\}\right)\times \left(\mathbb{C}^2\setminus \{\mathbf{0}\}\right)$ and $(\lambda, \mu)\in\left(\mathbb{C}\setminus \{0\}\right)\times \left(\mathbb{C}\setminus \{0\}\right)$, the mentioned action is given by
\[
(\lambda,\mu)\cdot(X_0,X_1;Y_0,Y_1):=(\lambda X_0,\lambda X_1; \mu Y_0, \lambda^{-\delta} \mu Y_1).
\]

The Cox ring of $\fd$ (see \cite{Cox}) is the $\Z^2$-graded polynomial ring $\mathbb{C}[X_0,X_1,Y_0,Y_1]$, where $\deg X_0: = \deg X_1 :=(1,0)$, $\deg (Y_0):= (0,1)$  and $\deg Y_1: = (-\delta, 1)$. A polynomial in $\mathbb{C}[X_0,X_1,Y_0,Y_1]$ is \textit{bihomogeneous of bidegree} $(d_1,d_2)\in\Z^2$ if it belongs to the homogeneous piece corresponding to  $(d_1,d_2)$, that is, if it is a complex linear combination of monomials $X_0^{a_1}X_1^{a_2}Y_0^{b_1}Y_1^{b_2}$ with $a_1+a_2-\delta b_2=d_1$ and $b_1+b_2=d_2$.

The Picard group ${\rm Pic}(\fd)$ of $\fd$ (identified with the group of divisors modulo numerical equivalence) is generated by the classes of two divisors, $F$ and $M$, where $F$ is a fiber of $\sigma$ and $M$ a section such that $M^2=\delta$; thus $F^2=0$ and $F\cdot M=1$. Throughout the paper, ${\mathcal O}_{\mathbb{F}_{\delta}}(d_1,d_2)$ denote the invertible sheaf ${\mathcal O}_{\fd}(d_1F+d_2M)$.

The effective classes in ${\rm Pic}(\fd)$ are  $d_1[F]+d_2[M]$, where  $(d_1,d_2)\in \mathbb{Z}^2$ satisfies $d_1 + \delta d_2 \geq 0$ and $d_2 \geq 0$ and, for any divisor $D$, $[D]$ stands for the class of $D$ in the Picard group \cite{Har}. Given an effective class $d_1[F]+d_2[M]$, the set of non-zero elements in the space of global sections of ${\mathcal O}_{\fd}(d_1,d_2)$ is naturally identified with the set of bihomogeneous polynomials of bidegree $(d_1,d_2)$.

A Hirzebruch surface is covered by four affine open sets $U_{ij},\;i,j\in\{0,1\}$:
\begin{equation}\label{eqn_open_cover}
U_{ij}:=\{(X_0,X_1;Y_0,Y_1)\in\fd \;|\;X_i\neq 0 \text{ and }Y_j\neq 0\},
\end{equation}
where we identify $(X_0,X_1;Y_0,Y_1)$ with its image by the natural quotient map $\left(\mathbb{C}^2\setminus \{\mathbf{0}\}\right)\times \left(\mathbb{C}^2\setminus \{\mathbf{0}\}\right)\rightarrow \fd$ associated with the above mentioned action. Notice that, in $U_{00}$, \[
(X_0,X_1;Y_0, Y_1)=(1,X_1/X_0;1,X_0^\delta Y_1/Y_0)\]
and thus, we can identify the points on $U_{00}$ with the points $(x,y)$ on $\C^2$ through the isomorphism $(1,x;1,y)\rightarrow (x,y)$. A similar identification holds for any other open set $U_{ij}$ giving rise to change of coordinates maps in the overlap of two open sets $U_{ij}$ (see, for instance, \cite{GalMonOliv} for more details).

\subsection{Foliations on Hirzebruch surfaces}

Let \fold be a foliation on a Hirzebruch surface $\mathbb{F}_{\delta}$. By \cite{GalMonOliv}, \fold is determined by a differential $1$-form
\[
\Omega^\delta=A_{\delta,0} dX_0 + A_{\delta,1}  dX_1 + B_{\delta,0} dY_0 + B_{\delta,1} dY_1,
\]
where $A_{\delta,0}$, $A_{\delta,1}$, $B_{\delta,0}$ and $B_{\delta,1}$ are bihomogeneous polynomials in $\C[X_0,X_1,Y_0,Y_1]$ without non-constant common factors (not all of them equal to 0) of respective bidegrees $(d_1 - \delta + 1,d_2 + 2)$, $(d_1 - \delta + 1,d_2 + 2)$, $(d_1 - \delta + 2, d_2 + 1)$ and $(d_1+2, d_2 + 1)$, with $d_1,d_2\in \mathbb{Z}$. Moreover they must satisfy the following two conditions:
\begin{gather*}
A_{\delta,0} X_0+A_{\delta,1} X_1 - \delta B_{\delta,1} Y_1 =0 \mbox{ and } B_{\delta,0} Y_0 + B_{\delta,1} Y_1=0.
\end{gather*}
Notice that ${\mathcal K}_{\mathcal F}={\mathcal O}_{\mathbb{F}_{\delta}}(d_1,d_2)$.

Finally, if $x=\frac{X_1}{X_0}$ and $y=\frac{X_0^\delta Y_1}{Y_0}$ are the affine coordinates in $U_{00}\cong \mathbb{C}^2$ considered in Subsection \ref{secHirzSurf}, then the foliation on $U_{00}$ defined by the restriction of \fold is given by the differential $1$-form $A_{\delta,1}(1,x,1,y)dx+B_{\delta,1}(1,x,1,y)dy$. There are analogous expressions for the restriction of \fold to the remaining affine open subsets $U_{ij}$, see \cite{GalMonOliv} for details.   

\begin{remark}
Foliations on Hirzebruch surfaces behave in a similar way to those on the complex projective plane $\PP^2$. Let $(X:Y:Z)$ be projective coordinates, then $\PP^2$ can be covered by three affine open sets $U_{W}=\{(X:Y:Z)\in\PP^2\,|\,W\neq 0\}$, where $W$ equals $X,\,Y$ or $Z$. We can identify each one of these affine open sets with $\C^2$. Then, it is well-known that a foliation \folp (of degree $r$) on $\PP^2$ is determined by a differential $1$-form \[
\Omega^{\PP^2}:=A_{\PP^2}dX+B_{\PP^2}dY+C_{\PP^2}dZ,\]
where $A_{\PP^2},\,B_{\PP^2}$ and $C_{\PP^2}$ are homogeneous polynomials (not all zero) of the same degree $r+1$ in the polynomial ring $\C[X,Y,Z]$ without non-constant common factors and satisfying the following condition (called Euler's condition):\[
A_{\PP^2}X+B_{\PP^2}Y+C_{\PP^2}Z=0.\]
\end{remark}

\subsection{Polynomial foliations on $\mathbb{C}^2$ and their extensions to Hirzebruch surfaces}

Consider a polynomial differential system on $\mathbb{C}^2$
\begin{equation}\label{poly}
\dot{x}=P(x,y),\;\; \dot{y}=Q(x,y),
\end{equation}
where $P,Q$ are polynomials in $\mathbb{C}[x,y]$ without non-constant common factors or, equivalently, the planar vector field $P(x,y)\frac{\partial}{\partial x}+Q(x,y)\frac{\partial}{\partial y}$ or the differential $1$-form $Q(x,y)dx-P(x,y)dy$. The system (\ref{poly}) defines a foliation on the affine plane $\mathbb{C}^2$ and we call this type of foliations, \emph{polynomial foliations} on $\mathbb{C}^2$.

Let \folc be a polynomial foliation on $\mathbb{C}^2$ and consider a Hirzebruch surface $\mathbb{F}_{\delta}$. Identifying the affine plane with the affine open subset $U_{00}\subseteq \mathbb{F}_{\delta}$ described in Subsection \ref{secHirzSurf}, \folc can be regarded as a foliation on $U_{00}$ and it admits an extension to $\mathbb{F}_{\delta}$. We denote this extension by ${\mathcal F}^\delta$ and it can be computed by means of the following algorithm, \cite[Section 3]{GalMonPerC2022}.

\begin{algorithm}\label{algoritmo}

$ $\vspace{0.25cm}

\textbf{Input:} A non-negative integer $\delta$, a differential $1$-form $A(x,y)dx+ B(x,y)dy$ (where $A(x,y), B(x,y)$ are coprime polynomials in $\mathbb{C}[x,y]$) defining a polynomial foliation \folc on $\C^2$.

\textbf{Output:} Polynomials $A_{\delta,0}, A_{\delta,1}, B_{\delta,0}, B_{\delta,1} \in\C[X_0,X_1,Y_0,Y_1]$ such that the $1$-form $\Omega^\delta=A_{\delta,0}dX_0+ A_{\delta,1}dX_1+ B_{\delta,0}dY_0+ B_{\delta,1}dY_1$ defines a foliation ${\mathcal F}^\delta$ on $\mathbb{F}_{\delta}$ whose restriction to $U_{00}\cong \mathbb{C}^2$ coincides with \folc. \\

\begin{enumerate} [label=(\arabic*)]

\item Write the rational functions $A\left(\frac{X_1}{X_0}, \frac{X_0^{\delta} Y_1}{Y_0} \right)$ and $B\left(\frac{X_1}{X_0}, \frac{X_0^{\delta} Y_1}{Y_0} \right)$ as reduced rational fractions $\frac{X_0^{\alpha_0}A_{\delta, 1}}{X_0^{\alpha_1} Y_0^{\alpha_2}}$ and $\frac{X_0^{\beta_0}B_{\delta, 1}}{X_0^{\beta_1} Y_0^{\beta_2}}$, respectively, where $(\alpha_0,\alpha_1,\alpha_2)$ and $(\beta_0,\beta_1,\beta_2)$ belong to $\mathbb{Z}_{\geq 0}^3$ and $A_{\delta, 1}$ and $B_{\delta, 1}$ are bigraded homogeneous polynomials in $\C[X_0,X_1,Y_0,Y_1]$ of respective bidegrees $(\lambda_1,\lambda_2)=(\alpha_1-\alpha_0,\alpha_2)$ and $(\mu_1,\mu_2)=(\beta_1-\beta_0,\beta_2)$ such that $A_{\delta,1},\;B_{\delta,1}$ and $X_0 Y_0$ are pairwise coprime.

\item Let $m_1:=\lambda_1-\mu_1+1+\delta$. If $m_1>0$, then $B_{\delta, 1}:=X_0^{m_1}B_{\delta, 1}$; otherwise, $A_{\delta, 1}:=X_0^{-m_1}A_{\delta, 1}$.

\item Let $m_2:=\lambda_2-\mu_2-1$. If $m_2>0$, then $B_{\delta, 1}:=Y_0^{m_2}B_{\delta, 1}$; otherwise, $A_{\delta, 1}:=Y_0^{-m_2}A_{\delta, 1}$.

\item Let $\gamma_2:=0$ if $Y_0$ divides $B_{\delta, 1}$, and $\gamma_2:=1$ otherwise. Set $B_{\delta, 1}:= Y_0^{\gamma_2} B_{\delta, 1}$ and $A_{\delta, 1}:=Y_0^{\gamma_2}A_{\delta, 1}$.

\item Let $\gamma_1:=0$ if $X_0$ divides $\delta Y_1 B_{\delta, 1}-X_1 A_{\delta, 1}$ and $\gamma_1:=1$ otherwise. Set $A_{\delta, 1}:=X_0^{\gamma_1} A_{\delta, 1}$ and $B_{\delta, 1}:=X_0^{\gamma_1} B_{\delta, 1}$.

\item Set $A_{\delta, 0}:=\frac{\delta Y_1 B_{\delta, 1}-X_1 A_{\delta, 1}}{X_0}$ and $B_{\delta, 0}:=\frac{-Y_1 B_{\delta, 1}}{Y_0}$.

\item Return $A_{\delta, 0}, A_{\delta, 1}, B_{\delta, 0}$ and $B_{\delta, 1}$.
\end{enumerate}
\end{algorithm}

\begin{remark}
If the differential $1$-form given in the input of the preceding algorithm is $dx$ (respectively, $dy$), then the polynomial $B_{\delta,1}=0$ (respectively, $A_{\delta,1}=0$) must be considered, by convention, as a global section of the sheaf ${\mathcal O}_{\mathbb{F}_{\delta}}(-\delta-1,-1)$, i.e., $(\mu_1,\mu_2)=(\delta-1,-1)$ (respectively $(\lambda_1,\lambda_2)=(\delta-1,-1)$) at Step (1); this is because, otherwise, the obtained $1$-form $\Omega^{\delta}$ would not be reduced. 

\end{remark}

\subsection{Rational first integrals}

 We start this subsection by defining the concept of \emph{rational first integral} of a foliation (respectively, polynomial foliation) on $\PP^2$ or $\fd$ (respectively, on $\mathbb{C}^2$). We also recall some of their properties.

\begin{definition}
Let $S$ denote either the (complex) plane $\mathbb{C}^2$, the projective plane $\PP^2$ or a Hirzebruch surface $\mathbb{F}_{\delta}$, and let $\mathcal{F}$ be a foliation on $S$ (which is assumed to be polynomial if $S=\mathbb{C}^2$). A non-constant rational function $h$ of $S$ is a \emph{rational first integral} of $\mathcal{F}$ if its indeterminacy set is contained in $\mathcal{S}_{\mathcal F}\cap S$ and its level curves contain the leaves of $\mathcal F$. If $\mathcal{F}$ admits a rational first integral we say that $\mathcal{F}$ is \emph{algebraically integrable}.
\end{definition}

Let \folc be an algebraically integrable polynomial foliation on $\mathbb{C}^2$ defined by a polynomial vector field $X:=P(x,y)\frac{\partial}{\partial x}+Q(x,y)\frac{\partial}{\partial y}$ (or, equivalently, by the associated $1$-form $\omega=Q(x,y)dx-P(x,y)dy$). A rational first integral of  \folc is, then, given by a non-constant (reduced) rational function $R=\frac{F}{G}\in \mathbb{C}(x,y)$ such that $X(R)=0$ (or, equivalently, $\omega \wedge dR=0$). We say that a rational function $R\in \mathbb{C}(x,y)$ is \emph{composite} if there exists a rational function (in one variable) $u=u_1(t)/u_2(t)\in \mathbb{C}(t)$, of degree $\deg(u):=\deg(u_1)-\deg(u_2)$ greater than 1, such that $R=u\circ H$, where $H\in \mathbb{C}(x,y)\setminus \mathbb{C}$. Otherwise $R$ is said to be \emph{non-composite}. If a foliation \folc is algebraically integrable, then the set of rational first integrals of \folc is  closed under composition with non-constant rational functions of $\mathbb{C}(t)$ and, moreover, a reduced rational function $R\in \mathbb{C}(x,y)$ is a rational first integral of \folc if and only if $u\circ R$ is a rational first integral of \folc for some non-constant rational function $u\in \mathbb{C}(t)$. In addition, there exists a non-composite rational first integral $R$ of \folc such that any other rational first integral has the form $u\circ R$ for some $u\in \mathbb{C}(t)\setminus \mathbb{C}$; then we say that $R$ is a \emph{primitive} rational first integral of \folc. Proofs of these facts can be seen in \cite{bost}.

The closures of the level curves of a primitive rational first integral $R=\frac{F}{G}$ of \folc, namely those defined by the equations $\alpha F(x,y)+\beta G(x,y)=0$ (where $(\alpha:\beta)$ runs over $\mathbb{P}^1$), form a \emph{pencil} of curves in $\mathbb{C}^2$, which we denote by ${\mathcal P}_{\mathcal{F}^{\C^2}}$. 

Regarding $\mathbb{C}^2$ as the affine open subset $U_Z$ (respectively, $U_{00}$) of $\PP^2$ (respectively, $\mathbb{F}_{\delta}$), the rational function $R$ corresponds to a rational first integral $R(X/Z,Y/Z)$ (respectively, $R(X_1/X_0,X_0^\delta Y_1/Y_0)$) of the extension \fol of \folc to $\PP^2$ (respectively, $\mathbb{F}_{\delta}$). As before, its level curves define a pencil of curves $\mathcal{P}_{\mathcal{F}}$ in $\PP^2$ (respectively, $\mathbb{F}_\delta$).

The reduced and irreducible invariant by \folc (respectively, \fol) curves are exactly the irreducible components of the curves in $\mathcal{P}_{\mathcal F^{\C^2}}$ (respectively, $\mathcal{P}_{\mathcal{F}}$). In addition, all the curves of the pencil $\mathcal{P}_{\mathcal F^{\C^2}}$ (respectively, $\mathcal{P}_{\mathcal{F}}$), except finitely many, are reduced and irreducible.

\section{Rational first integrals with given genus}\label{sec_ipr}

From now on, we denote by $S_0$ a surface which is either $\PP^2$ or $\fd$, $\delta\geq 0$. In this section we introduce the pieces for proving our main results. These are Algorithm \ref{algoritmo3} and Corollary \ref{rr2} and are also stated at the end of this section. Consider a polynomial foliation \folc on $\mathbb{C}^2$. \emph{Through the extension \fol of \folc to $S_0$, Algorithm \ref{algoritmo3} allows us to decide (except in a particular situation) whether \folc has a rational first integral of given genus $g\neq 1$ (and computes it in the affirmative case)}. Corollary  \ref{rr2} proves that a minor modification of Algorithm \ref{algoritmo3} decides about polynomial integrability without any exception. Recall that the \emph{genus of a rational first integral} of \folc is the geometric genus of (the compactification of) a general curve of the pencil $\mathcal{P}_{\mathcal F^{\C^2}}$ and it is a birational invariant. The existence of a \ipr of a foliation \folc is equivalent to the existence of a \ipr of its extended foliation \fol, and one can be computed from the other.

\subsection{The $\mathbb{Q}$-divisor $\tf$ associated to an algebraically integrable foliation \fol}\label{rfi}

In this subsection, \folc is a polynomial foliation on $\mathbb{C}^2$ which admits a (primitive) rational first integral $R$. We are going to give some information concerning the pencil ${\mathcal P}_{\mathcal F}$ (and, as a consequence, the first integral) coming from the global geometry of the surface obtained from the dicritical configuration of the extended foliation \fol. In particular we will define a $\Q$-divisor attached to \fol which will be very useful in our next subsection on algebraic integrability with bounded genus.

If $S_0=\PP^2$ (respectively, $\fd$), we consider the rational first integral of \fol given by $\frac{F}{G}=R(X/Z,Y/Z)$ (respectively, $R(X_1/X_0, X_0^\delta Y_1/Y_0)$),  $F, G$ being coprime polynomials in $\C[X,Y,Z]$ (respectively, $\mathbb{C}[X_0,X_1,Y_0,Y_1]$). Then, the pencil ${\mathcal P}_{\mathcal{F}}$ is generated by the curves on $S_0$ with equations $F=0$ and $G=0$ and the assignment $P\mapsto (F(P):G(P))$ gives rise to a rational map $\phi: S_0\dashrightarrow \mathbb{P}^1$ whose indeterminacy locus is supported at the set of base points of ${\mathcal P}_{{\mathcal F}}$ (which is finite). By \cite[Theorem II.7]{Beau}, there exists a finite sequence of point blowups 
\begin{equation}\label{www}
Z_{\mathcal{F}}=Z_{n+1}\rightarrow Z_{n}\rightarrow\cdots\rightarrow Z_2\rightarrow Z_1:=S_0,
\end{equation}
and a morphism $\varphi: Z_{{\mathcal F}}\rightarrow \mathbb{P}^1$ that eliminates the indeterminacies of $\phi$, that is, if $\pi_{\mathcal{F}}$ denotes the composition of the blowups in (\ref{www}), it holds that $\varphi=\phi\circ \pi_{\mathcal{F}}$. Equivalently, $\pi_{\mathcal{F}}$ eliminates the base points of the pencil ${\mathcal P}_{\mathcal{F}}$ (see the proof of \cite[Theorem II.7]{Beau}, or \cite[Sections 5 and 6]{fergal1} for a very explicit description of this process). 

It turns out that the configuration given by the centers of the blowups in (\ref{www}) coincides with the dicritical configuration $\mathcal{B}_{\mathcal{F}}$ associated to \fol. This fact is deduced from \cite{TesJulio} and, also, it is proved in \cite[Corollary 2]{fergal1} (although the latter result is shown for foliations on the projective plane, the proof is local in nature and, therefore, it can be trivially adapted for foliations on a Hirzebruch surface). Hence, \emph{the dicritical reduction of singularities of $\mathcal{F}$ provides the surface $Z_{\mathcal{F}}$ corresponding to the elimination of base points of the pencil $\mathcal{P}_{\mathcal{F}}$}. Let us write
$$\mathcal{B}_{\mathcal{F}}=\{p_1,\ldots,p_n\}.$$

Throughout the paper, given a curve $C$ on $S_0$ or on any of the intermediate surfaces $Z_i$, $2\leq i\leq n$, we will denote by $C^*$ (respectively, $\widetilde{C}$) the total (respectively, strict) transform of $C$ on the surface $Z_{\mathcal{F}}$. Also, we denote by $E_i$ the exceptional divisor produced by the blowup $Z_{i+1}\rightarrow Z_i$ of the sequence (\ref{www}). Denote by $L$ a line on $\PP^2$ and keep the above notation. Then, the Picard group of $Z_{\mathcal{F}}$ is generated by $[L^*], [E_1^*], \ldots,[E^*_n]$ (respectively, $[F^*], [M^*], [E^*_1],\ldots,[E^*_n]$) whenever $S_0=\PP^2$ (respectively, $S_0=\fd$), where $[\cdot]$ stands for numerical equivalence class. Recall that, for all $i,r\in \{1,\ldots,n\}$, ${E}^*_i\cdot {E}^*_r=-\delta_{ir}$, $\delta_{ir}$ being the Kronecker's delta, and $\widetilde{E}_i=E_i^*-\sum_{p_r\rightarrow p_i} E_r^*$.

Consider the divisor, also on $Z_{\mathcal F}$,
\begin{equation}\label{divisor}
D_{\mathcal{F}}:=bL^*-\sum_{i=1}^n m_i E_i^*\,\,\left(\text{respectively, }aF^*+b M^*-\sum_{i=1}^n m_i E_i^*\right)
\end{equation}
if $S_0=\PP^2$ (respectively, $S_0=\fd$) such that the above morphism $\varphi$ coincides with the morphism $\phi_{|D_{\mathcal{F}}|}$ induced by the complete linear system $|D_{\mathcal{F}}|$ (up to the choice of a suitable basis). Then, the strict transform on $Z_{\mathcal{F}}$ of a general curve of the pencil ${\mathcal P}_{\mathcal{F}}$ is linearly equivalent to $D_{\mathcal{F}}$ (see the proof of Theorem II.7 in \cite{Beau}). Hence, $\pf\subseteq |bL|$ (respectively, ${\mathcal P}_{\mathcal{F}}\subseteq |aF+bM|$) whenever $S_0=\PP^2$ (respectively, $S_0=\fd$), and all curves in ${\mathcal P}_{\mathcal{F}}$ but finitely many (which we call \emph{special curves}) are reduced and irreducible. Moreover, the multiplicity of their strict transforms at $p_i$ is $m_i$, $1\leq i\leq n$.

Notice that $\df$ is a nef divisor on $Z_{\mathcal F}$ and, as a consequence, $b> 0$ (respectively, $a\geq 0$ and $b\geq 0$) if $S_0=\PP^2$ (respectively, $S_0=\fd$) \cite{Har}. We name $\df$ the \emph{characteristic divisor} of \fol.

An exceptional divisor $E_i$ is \emph{dicritical} (with respect to ${\mathcal F}$) if $p_i$ is an infinitely near terminal dicritical singularity; otherwise $E_i$ is non-dicritical.

The next two lemmas provide some properties of the divisor $\df$.
\begin{lemma}\label{clave}
Keep the above notation and assumptions. Let $\widetilde{\mathcal{F}}$ be the strict transform of \fol and $C$ a curve both on $Z_{\mathcal{F}}$. The following conditions are equivalent:
\begin{itemize}
    \item[(a)] $C$ is $\widetilde{\mathcal{F}}$-invariant.
    \item[(b)] The integral (reduced and irreducible) components of $C$ are either strict transforms of $\mathcal{F}$-invariant integral curves or strict transforms of non-dicritical (with respect to ${\mathcal F}$) exceptional divisors.
    \item[(c)] $D_{\mathcal{F}}\cdot C=0$.
\end{itemize}
\end{lemma}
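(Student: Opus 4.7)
The plan is to prove the cycle (b)$\Rightarrow$(a)$\Rightarrow$(c)$\Rightarrow$(a)$\Rightarrow$(b), after first reducing to the case in which $C$ is integral. The reduction is straightforward: write $C=\sum b_iC_i$ as a sum of integral components with $b_i\in\mathbb{Z}_{>0}$. Invariance of $C$ amounts to invariance of each $C_i$, condition (b) is itself a statement about the $C_i$'s, and, as $D_{\mathcal{F}}$ is nef (being base-point-free, since $\varphi$ is a morphism), the equality $D_{\mathcal{F}}\cdot C=0$ is equivalent to $D_{\mathcal{F}}\cdot C_i=0$ for every $i$.

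For (a)$\Leftrightarrow$(b) the argument is local. If $C$ is the strict transform of an $\mathcal{F}$-invariant integral curve on $S_0$, its invariance is preserved at each of the blow-ups composing $\pi_{\mathcal{F}}$, so $C$ is $\widetilde{\mathcal{F}}$-invariant. If $C=\widetilde{E}_i$ for some non-dicritical $p_i$, then by Theorem \ref{jmaa}(2), $E_i$ is invariant by the strict transform of $\mathcal{F}$ on $\mathrm{Bl}_{p_i}(Z_i)$, and that invariance is preserved under the subsequent blow-ups. Conversely, an integral $\widetilde{\mathcal{F}}$-invariant curve either maps by $\pi_{\mathcal{F}}$ onto a curve of $S_0$ whose $\mathcal{F}$-invariance descends from that of its strict transform, or is contracted by $\pi_{\mathcal{F}}$, in which case it is the strict transform of one of the $E_i$, and Theorem \ref{jmaa}(2) applied at $p_i$ rules out the terminal dicritical case.

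For (a)$\Rightarrow$(c), observe that every fiber $F$ of $\varphi$ belongs to $|D_{\mathcal{F}}|$, so two distinct fibers are linearly equivalent and disjoint, giving $D_{\mathcal{F}}^2=0$. Every integral $\widetilde{\mathcal{F}}$-invariant curve appears as a component of some (possibly special) fiber of $\varphi$: this is the key algebraic-integrability input, since on the one hand every $\mathcal{F}$-invariant irreducible curve on $S_0$ is a component of some member of $\mathcal{P}_{\mathcal{F}}$, and on the other hand the non-dicritical exceptional divisors appear as components of the total transforms of the members of $\mathcal{P}_{\mathcal{F}}$. Then $D_{\mathcal{F}}\cdot F=0$ and nefness of $D_{\mathcal{F}}$ force $D_{\mathcal{F}}\cdot C'=0$ for each integral component $C'$ of $F$. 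Finally, (c)$\Rightarrow$(a) is short: because $D_{\mathcal{F}}\sim\varphi^{*}\mathcal{O}_{\mathbb{P}^1}(1)$, for an integral curve $C$ the number $D_{\mathcal{F}}\cdot C$ equals the degree of $\varphi|_C$, so $D_{\mathcal{F}}\cdot C=0$ forces $\varphi|_C$ to be constant, whence $C$ lies in a fiber of $\varphi$ and is therefore $\widetilde{\mathcal{F}}$-invariant.

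The main obstacle is the dictionary used in (a)$\Rightarrow$(c) between integral $\widetilde{\mathcal{F}}$-invariant curves and components of fibers of $\varphi$: one needs to verify that every $\mathcal{F}$-invariant irreducible curve of $S_0$ actually belongs to some member of $\mathcal{P}_{\mathcal{F}}$ (a consequence of the existence of a primitive rational first integral) and, crucially, that no dicritical exceptional divisor can appear in a fiber---such divisors are transverse to $\widetilde{\mathcal{F}}$ and meet the general fiber in finitely many points. Once this dictionary is in place, the remaining arguments are short applications of nefness of $D_{\mathcal{F}}$ and of the projection formula for $\varphi$.
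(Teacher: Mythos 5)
Your proof is correct and follows essentially the same route as the paper: both arguments rest on identifying $\varphi$ with the morphism eliminating the base points of $\mathcal{P}_{\mathcal{F}}$, on nefness of $D_{\mathcal{F}}$ together with $D_{\mathcal{F}}^2=0$, and on the dichotomy between dicritical and non-dicritical exceptional divisors (where the paper simply cites \cite[Propositions 1 and 2]{fergal1}, you re-derive the needed facts from Theorem \ref{jmaa}(2) and a transversality argument). The only compressed spot is your justification that a dicritical $\widetilde{E}_{k_j}$ is not contracted by $\varphi$: ``transverse, hence meets the general fiber in finitely many points'' should be expanded to note that if $\varphi$ were constant on $\widetilde{E}_{k_j}$, the leaves through its (infinitely many) general points, being transverse to it and contained in that single fiber, would produce infinitely many integral components of one fiber --- but this is exactly the content of the cited \cite[Proposition 2]{fergal1}, so no genuine gap remains.
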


\begin{proof}
To prove the equivalence between (a) and (b), it is enough to show that $\widetilde{E}_i$ is $\widetilde{\mathcal{F}}$-invariant if and only if $E_i$ is non-dicritical. This fact is proved in \cite[Proposition 1]{fergal1} for $S_0=\PP^2$ and the same arguments are valid for $S_0=\fd$.

Let us show the equivalence between (b) and (c). Without loss of generality we can assume that $C$ is an integral curve on $Z_{\mathcal{F}}$. 
On the one hand, if $C$ is the strict transform of a curve $C'$ on $S_0$, then one has that $C$ is $\widetilde{\mathcal{F}}$-invariant if and only if $C'$ is $\mathcal{F}$-invariant. This happens if and only if $C$ is a component of a fiber of the morphism $\varphi$, which is equivalent to say that $C$ is contracted by $\varphi$ (that is, $D_{\mathcal{F}}\cdot C=0$). On the other hand, applying \cite[Proposition 2]{fergal1} (whose proof is also true within our framework), if $C=\widetilde{E}_i$ for some $i\in \{1,\ldots,n\}$, it holds that $E_i$ is non-dicritical if and only if $m_i-\sum_{p_r\rightarrow p_i}m_r=0$, that is, if and only if $D_{\mathcal{F}}\cdot \widetilde{E}_i=0$.

\end{proof}

\begin{lemma}\label{clave2}
Keeping the above notation and assumptions, the following equalities hold:
\begin{itemize}
\item[(a)] $D_{\mathcal F}^2=0$.
\item[(b)] $(\pi_{\mathcal{F}})_*|D_{\mathcal{F}}|=\mathcal{P}_{\mathcal{F}}$.
\end{itemize}

\end{lemma}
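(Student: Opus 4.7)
My plan is to leverage two facts just recorded before the lemma: (i) the morphism $\varphi\colon Z_{\mathcal F}\to\PP^1$ coincides (up to choice of basis) with the map $\phi_{|\df|}$ induced by the complete linear system $|\df|$, so the elements of $|\df|$ are precisely the divisorial fibers $\varphi^{-1}(t)$, $t\in\PP^1$; and (ii) the strict transform on $Z_{\mathcal F}$ of a general curve of $\pf$ is linearly equivalent to $\df$.

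For part (a), I would pick a general $C\in\pf$. Being a level curve of the rational first integral, $C$ is \fol-invariant, and hence its strict transform $\widetilde C$ on $Z_{\mathcal F}$ is $\widetilde{\mathcal F}$-invariant. Lemma \ref{clave} then yields $\df\cdot\widetilde C=0$, and combining this with $\widetilde C\sim\df$ gives $\df^2=\df\cdot\widetilde C=0$. An equivalent route is to observe that two distinct fibers of the morphism $\varphi$ are set-theoretically disjoint and both represent the class of $\df$, so $\df^2=0$.

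For part (b), the plan is to transport the bijection $t\mapsto\varphi^{-1}(t)$ between $\PP^1$ and $|\df|$ to the bijection $t\mapsto\phi^{-1}(t)$ between $\PP^1$ and $\pf$ via pushforward by $\pi_{\mathcal F}$. Concretely, for each $t\in\PP^1$, write $\varphi^{-1}(t)=\widetilde C_t+R_t$, where $C_t:=\phi^{-1}(t)\in\pf$ and $R_t$ is an effective exceptional divisor; the latter is supported on the strict transforms $\widetilde E_i$ that $\varphi$ contracts, which by Lemma \ref{clave} are exactly the non-dicritical exceptional divisors that lie over $C_t$. Since $(\pi_{\mathcal F})_*\widetilde E_i=0$ for every $i$, pushforward gives $(\pi_{\mathcal F})_*\varphi^{-1}(t)=C_t\in\pf$, proving $(\pi_{\mathcal F})_*|\df|\subseteq\pf$. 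The reverse inclusion is immediate: every $C\in\pf$ equals $\phi^{-1}(t)$ for some $t$, and thus arises as the pushforward of the element $\varphi^{-1}(t)\in|\df|$.

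The main technical obstacle I anticipate is the bookkeeping of multiplicities in the decomposition $\varphi^{-1}(t)=\widetilde C_t+R_t$ for the finitely many reducible or non-reduced special curves of $\pf$, in particular ensuring that the exceptional part $R_t$ picks up precisely the coefficients needed so that $\widetilde C_t+R_t$ lies in the class of $\df$ on $Z_{\mathcal F}$ while losing nothing under $(\pi_{\mathcal F})_*$. This should reduce to a local computation at the base points of $\pf$ and follow routinely from the construction of the dicritical reduction of singularities recalled in Subsection \ref{subsec_red_sing}, together with the projection formula.
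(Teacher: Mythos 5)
Your proof is correct, and part (a) is essentially the paper's argument (two general members of $|\df|$, being distinct fibers of $\varphi$, are disjoint); your alternative route through Lemma \ref{clave} is also sound and not circular, since that lemma's proof does not use $\df^2=0$. For part (b), however, you take a genuinely different route on the nontrivial inclusion $(\pi_{\mathcal F})_*|\df|\subseteq\pf$. You identify every member of the complete linear system $|\df|$ with a fiber $\varphi^*(t)$ and push forward; this is clean (and the multiplicity bookkeeping you worry about is a non-issue, since $(\pi_{\mathcal F})_*$ of a divisor under a birational morphism simply kills exceptional components and sends strict transforms back with their multiplicities), but it rests entirely on the assertion that $\varphi$ is induced by the \emph{complete} linear system $|\df|$, i.e.\ that $|\df|$ contains nothing beyond the fibers of $\varphi$. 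That completeness is exactly the content hiding behind the equality in (b), and the paper's proof is designed not to assume it: given an arbitrary effective $D\in|\df|$, it uses part (a) together with nefness of $\df$ to get $\df\cdot\widetilde{H}_1=0$ for every integral component $H_1$ of $(\pi_{\mathcal F})_*D$, then Lemma \ref{clave} to place each $H_1$ inside a member of $\pf$, and finally the finiteness of the special (reducible or non-reduced) curves of the pencil to derive a contradiction if $(\pi_{\mathcal F})_*|\df|\setminus\pf$ were non-empty. So the paper's argument buys independence from the completeness claim (it effectively proves that $|\df|$ is just the pencil of fibers), while yours buys brevity at the price of leaning on that stated fact; as written against the paper's setup your proof is valid, but you should be aware that it is closer to unwinding the definition of $\df$ than to verifying the substantive half of the statement.
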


\begin{proof}

Part (a) is because two general elements of the linear system $|\df|$ do not meet. 

To prove Part (b), notice that the inclusion $\mathcal{P}_{\mathcal{F}}\subseteq (\pi_{\mathcal{F}})_*|D_{\mathcal{F}}|$ holds because, for any curve $C\in \pf$, either $\widetilde{C}\in |\df|$ or there exists an effective divisor $E$ with exceptional support such that $\widetilde{C}+E\in |\df|$ (see the proof of Theorem II.7 in \cite{Beau}). 

To prove the equality we reason by contradiction. Then, assume that $(\pi_{{\mathcal F}})_*|D_{\mathcal{F}}|\setminus \pf$ is not empty (and, therefore, infinite). Let us show that the integral components of any curve $H\in (\pi_{{\mathcal F}})_*|\df|\setminus \pf$ are components of a special curve of the pencil ${\mathcal P}_{{\mathcal F}}$. Indeed, if $H_1$ is such a component then $D_{{\mathcal F}}\cdot \widetilde{H}_1=0$ (as a consequence of Part (a) and the fact that $D_{{\mathcal F}}$ is a nef divisor). By Lemma \ref{clave}, there exists a curve $F\in \pf$ such that $H_1$ is an integral component of $F$. Finally, notice that $F$ is a special curve of ${\mathcal P}_{{\mathcal F}}$ because, otherwise, $\widetilde{F}$ would be an element of the complete linear system $|\df|$ and, therefore, $F$ would be equal to $H$ (which is a contradiction because $H\not\in \pf$). Since there are finitely many special curves in $\pf$, there are finitely many possible curves $H$ as above, which is a contradiction with the fact that the set $(\pi_{{\mathcal F}})_*|D_{\mathcal{F}}|\setminus \pf$ is infinite.

\end{proof} 

The following result shows that for deciding about algebraic integrability of foliations on Hirzebruch surfaces, \emph{one can assume, without loss of generality, that their dicritical configurations are not empty}. By Bézout's Theorem, this assumption is also valid for foliations on $\PP^2$.

\begin{proposition}\label{befe}

Let \fold be a foliation on a Hirzebruch surface $\fd$. Assume that the dicritical configuration $\mathcal{B}_{\foldelta}$ of $\foldelta$ is empty. Then, either $\foldelta$ is the foliation defined by the fibers of the ruling $\mathbb{F}_{\delta}\rightarrow \mathbb{P}^1$ given by the projection $(X_0,X_1;Y_0,Y_1)\mapsto (X_0,X_1)$, or $\delta=0$ and $\mathcal F^0$ is defined by the fibers of the ruling $\mathbb{F}_{0}\cong\mathbb{P}^1\times \mathbb{P}^1 \rightarrow \mathbb{P}^1$ given by the projection $(X_0,X_1;Y_0,Y_1)\mapsto (Y_0,Y_1)$. 

Moreover, when $\mathcal{B}_{\foldelta}\neq \emptyset$, the coefficient $b$ in the expression given in (\ref{divisor}) is different from 0.

\end{proposition}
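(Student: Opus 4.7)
My plan is to prove the two claims of the proposition separately, both under the standing hypothesis of Subsection 3.1 that $\mathcal{F}^{\C^2}$ (and hence $\foldelta$) is algebraically integrable, so that the pencil $\pencildelta$ and the characteristic divisor $D_{\foldelta}$ are available. The key input I would reuse, already established earlier in this subsection, is the identification of the sequence of blowups (\ref{www}) centered at $\mathcal{B}_{\foldelta}$ with the minimal elimination of the base points of $\pencildelta$.

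For the first claim, assume $\mathcal{B}_{\foldelta}=\emptyset$. Then $Z_{\foldelta}=\fd$ and $\pencildelta$ is already base-point-free, so the associated rational map is in fact a morphism $\phi\colon \fd \to \PP^1$ whose general fiber is an irreducible reduced curve $C$ of class $[C]=aF+bM$ with $b\geq 0$ and $a+\delta b\geq 0$. Since two general fibers of $\phi$ do not meet, one has $C^2=b(2a+\delta b)=0$. A short case analysis then completes the argument. When $b=0$, irreducibility of $C$ forces $a=1$, so $[C]=F$, $\phi=\sigma$, and $\foldelta$ is the foliation of the fibers of $\sigma$. When $b\geq 1$ and $2a+\delta b=0$, an inspection of the bihomogeneous polynomials of bidegree $(-\delta b/2,\,b)$ in the Cox ring shows that $Y_1$ divides every such polynomial whenever $\delta\geq 1$, which rules out irreducible representatives, and the only surviving option is $\delta=0$, $b=1$, with $[C]=M$ and $\phi$ equal to the projection onto the second factor of $\mathbb{F}_0\cong \PP^1\times \PP^1$.

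For the second claim, I argue by contrapositive. Suppose $b=0$ in (\ref{divisor}). Because $D_{\foldelta}$ is nef, pairing with $M^*$ gives $a\geq 0$; moreover $a=0$ would force $D_{\foldelta}=-\sum_{i=1}^n m_iE_i^*$, which has no nontrivial global sections and contradicts the existence of the pencil, so $a\geq 1$. Applying $(\pi_{\foldelta})_*$ to $D_{\foldelta}=aF^*-\sum_{i=1}^n m_iE_i^*$ yields $(\pi_{\foldelta})_* D_{\foldelta}=aF$, whence, by Lemma \ref{clave2}(b), every member of $\pencildelta$ lies in $|aF|$. But a general element of $\pencildelta$ is irreducible and reduced while a general element of $|aF|$ is a sum of $a$ fibers of $\sigma$; hence $a=1$ and $\pencildelta$ coincides with the pencil of fibers of $\sigma$. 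This forces $\foldelta$ to be the fiber foliation of $\sigma$, whose pencil is base-point-free, so $\mathcal{B}_{\foldelta}=\emptyset$, contrary to hypothesis.

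The main obstacle is the exclusion step in the first claim, namely showing that the effective classes $-\tfrac{\delta b}{2}F+bM$ with $b\geq 1$ on $\fd$ for $\delta\geq 1$ admit no irreducible representative. The proposed resolution is a direct monomial inspection in the bigraded Cox ring: the bidegree conditions $a_1+a_2-\delta b_2=-\delta b/2$ and $b_1+b_2=b$ together with non-negativity of the exponents force $b_2\geq 1$ in every contributing monomial, so $Y_1$ divides every section, and every curve in the class splits off the negative section $\{Y_1=0\}$. The rest of the argument is routine bookkeeping with the intersection pairing on $Z_{\foldelta}$ and the push-forward identities under $\pi_{\foldelta}$.
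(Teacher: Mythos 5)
Your proof is correct. For the first statement it follows essentially the paper's route: both arguments reduce to $D_{\foldelta}^2=2ab+\delta b^2=0$ (Lemma \ref{clave2}) and split into the cases $b=0$ and $a=-\delta b/2$; the only local difference is how the subcase $\delta\geq 1$, $a=-\delta b/2$, $b\geq 1$ is excluded. The paper intersects $D_{\foldelta}$ with the negative section $M_0$ (getting $D_{\foldelta}\cdot M_0=a<0$, impossible for a base-point-free system), while you inspect monomials in the Cox ring and note that $Y_1$ divides every section of bidegree $(-\delta b/2,b)$; both are valid, and yours has the minor advantage of exhibiting explicitly how every member of such a class splits off the negative section. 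For the second statement your route is genuinely different and considerably longer than the paper's: the paper observes in one line that $b=0$ together with $\mathcal{B}_{\foldelta}\neq\emptyset$ forces $D_{\foldelta}^2=-\sum_{i=1}^n m_i^2<0$ (each $m_i\geq 1$ since the $p_i$ are base points of the pencil), contradicting $D_{\foldelta}^2=0$. You instead argue by contrapositive, pushing $D_{\foldelta}$ forward to $aF$, invoking Lemma \ref{clave2}(b) to place $\pencildelta$ inside $|aF|$, and forcing $a=1$ from irreducibility of the general member, so that the pencil is base-point free. This is correct but buys nothing over the direct self-intersection computation; the one genuinely useful point you make explicit, which the paper leaves implicit, is that the whole statement lives under the standing hypothesis of Subsection 3.1 that $\foldelta$ is algebraically integrable, without which $D_{\foldelta}$ and the coefficient $b$ are not defined.
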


\begin{proof}
Firstly, assume that $\mathcal{B}_{\foldelta}$ is empty and, hence, $Z_{\foldelta}=\mathbb{F}_\delta$. Let $D_{\foldelta}=aF+bM$, with $a,b\in \mathbb{Z}$. Then, by Lemma \ref{clave2}, $0=D_{\foldelta}^2=2ab+b^2\delta$ and, therefore, either $b=0$ or $a=-b\delta/2$. 

In the first case, as the projective dimension of $(\pi_{{\mathcal F}^\delta})|D_{\foldelta}|={\mathcal P}_{\foldelta}$ is $1$, one has that $a=1$ and, then, ${\mathcal P}_{\foldelta}$ is the pencil of curves with equations $\alpha X_0+\beta X_1=0$, where $(\alpha:\beta)$ runs over $\mathbb{P}^1$. This means that the algebraic $\foldelta$-invariant curves are given by the fibers of the natural ruling $\mathbb{F}_\delta\rightarrow \mathbb{P}^1$.

In the second case (that is, $a=-b\delta/2$), $\delta$ must be zero because, otherwise, $D_\foldelta\cdot M_0<0$ (which is a contradiction because the linear system $|D_\foldelta|$ has no base point). Then $D_\foldelta=bM$ and a similar reasoning as in the above paragraph shows that $b=1$ and the algebraic  $\foldelta$-invariant curves are given by the fibers of the projection defined by $(X_0,X_1,Y_0,Y_1)\mapsto (Y_0,Y_1)$.

The last statement holds because, if ${\mathcal B}_{\foldelta}\neq \emptyset$ and $b=0$, then $D_{\foldelta}^2<0$, which is a contradiction by Lemma \ref{clave2}.

\end{proof}

We introduce more notation that will be used in the next subsection. Let \fol be an algebraically integrable foliation on $S_0$ and $\bf\neq \emptyset$ its dicritical configuration. Then, by Bézout's Theorem if $S_0=\PP^2$, or by Proposition \ref{befe} if $S_0=\fd$, $b$ is different from zero (with notation as in (\ref{divisor})). Denote by $\tf$ the $\mathbb{Q}$-divisor $\tf:=\frac{1}{b}\df$ on $Z_{\mathcal F}$, $\df$ being the characteristic divisor of \fol. Thus, $\tf$ equals
\begin{equation}\label{eqn_def_tf}
{\everymath={\displaystyle}
\begin{array}{c}
L^*-\sum_{i=1}^{n}s_i{E}_i^*,\,\text{ if }S_0=\PP^2\text{ and}\\
h F^*+M^*-\sum_{i=1}^{n}s_i{E}_i^*,\,\text{ if }S_0=\fd,
\end{array}}
\end{equation}
where $h:=a/b\in\Q$ and $s_i:=m_i/b\in \Q_{>0}$ for all $i$.

 Recall that $\bf=\{p_0,\ldots,p_n\}$. For each $r\in \{1,\ldots,n\}$, let $q_r\in {\mathcal B}_{\mathcal F}$ be the unique point in $S_0$ such that $p_r\geq q_r$. Let us consider the following divisor with exceptional support:
 \begin{equation*}
\hat{E}_r:=\sum_{i=1}^n a_{ir} E_i^*,
\end{equation*}
where, for all $i=1,\ldots,n$, $a_{ir}:={\rm mult}_{p_i}(\varphi_r)$, $\varphi_r$ denoting an analytically irreducible germ of curve in ${\mathcal O}_{S_0,q_r}$ whose strict transform is transversal to the divisor $E_r$ at a general point, and ${\rm mult}_{p_i}(\varphi_r)$ is the multiplicity of its strict transform at $p_i$. Notice that $a_{ir}=0$ if $p_i\not\in ({\mathcal B}_{\mathcal F})^{p_r}$, $a_{rr}=1$ and $a_{ir}=\sum_{p_\ell\rightarrow p_i} a_{\ell r}$ for all $i$ such that  $p_i\in ({\mathcal B}_{\mathcal F})^{p_r}\setminus \{p_r\}$. See Subsection \ref{subsec_conf} for the concepts of complete chain and proximity. The set $\hat{\mathcal E}:=\{\hat{E}_1,\ldots,\hat{E}_n\}$ is a basis of the free $\mathbb{Z}$-module $\bigoplus_{i=1}^n \mathbb{Z} E_i^*$ and  \cite[Lemma 8.4.5]{Cas} shows that 
 $$\sum_{i=1}^n m_i E_i^*=\sum_{j=1}^d \rho_{k_j} \hat{E}_{k_j},$$
 where $p_{k_1},\ldots,p_{k_d}$ denote the (infinitely near) terminal dicritical singularities of \fol and $\rho_{k_j}:=m_{k_j}-\sum_{p_\ell\rightarrow p_{k_j}} m_\ell$ for all $j=1,\ldots,d$. Notice that $\rho_{k_j}>0$ for all $j$ by Lemma \ref{clave} (because $\widetilde{E}_{k_j}$ is not {\fol-}invariant and $\df$ is a nef divisor). Hence, we can write the $\mathbb{Q}$-divisor $\tf$ in the following form:
\begin{equation*}
{\everymath={\displaystyle}
\begin{array}{c}
 \tf=L^*-\sum_{j=1}^d \beta_j \hat{E}_{k_j},\,\text{ if }S_0=\PP^2\text{ and}\\
 \tf=hF^*+M^*-\sum_{j=1}^d \beta_j \hat{E}_{k_j},\text{ if }S_0=\fd,
\end{array}}
\end{equation*}
 where $\beta_j:=\rho_{k_j}/b$. Thus, for all $j=1,\ldots,d$, we can rewrite 
 \begin{equation}\label{epsilonpico}
\hat{E}_{k_j}=\sum_{i=1}^n \lambda_{ij} E_i^*
 \end{equation}
 and then
\begin{equation}\label{tt2}
{\everymath={\displaystyle}
\begin{array}{c}
 \tf=L^*- \sum_{i=1}^n \left(\sum_{j=1}^d \lambda_{ij}\beta_j\right) E_i^*,\,\text{ if }S_0=\PP^2\text{ and}\\
 \tf=hF^*+M^*- \sum_{i=1}^n \left(\sum_{j=1}^d \lambda_{ij}\beta_j\right) E_i^*,\,\text{ if }S_0=\fd,
\end{array}}
\end{equation}
 where $\lambda_{ij}:=a_{i k_j}$ for all $i=1,\ldots,n$.
 
 \begin{remark}\label{lambdas}
 The basis $\hat{\mathcal E}$ and the values $\lambda_{ij}$ given in (\ref{epsilonpico}) are defined \emph{regardless of whether or not $\mathcal F$ is algebraically integrable}. Moreover, the values $\lambda_{ij}$ can be computed directly from the proximity graph of the configuration $\mathcal{B}_{\mathcal F}$.
\end{remark}

\subsection{An algorithm for algebraic integrability with bounded genus}\label{subsec_alg_int}

In this subsection, we state our main results on algebraic integrability announced at the beginning of the section. We also give the main ingredients for stating and proving our results. Let \folc be a polynomial foliation on $\C^2$ (which needs not to be algebraically integrable). Let $S_0$ be either the projective plane $\PP^2$ or a Hirzebruch surface $\fd$, $\delta\geq 0$, and consider \fol the extension of \folc to $S_0$. Let ${\mathcal K}_{\mathcal F}={\mathcal O}_{\PP^2}(r-1)$ (respectively, ${\mathcal O}_{\mathbb{F}_\delta}(d_1,d_2)$) if $S_0=\PP^2$ (respectively, $S_0=\fd$) be its canonical sheaf.

Assume, as mentioned above, that \emph{the dicritical configuration ${\mathcal B}_{\mathcal F}$ is not empty} (notice that Proposition \ref{befe} describes the rational first integrals of \fol whenever $S_0=\fd$ and ${\mathcal B}_{\mathcal F}=\emptyset$). Let $d$ be the number of \emph{(infinitely near) terminal dicritical singularities}.

Keep the notation in the previous subsections. In particular, suppose that
$${\mathcal B}_{\mathcal F}=\{p_1,\ldots,p_n\}$$
and denote by $$p_{k_1},\ldots,p_{k_d}\in {\mathcal B}_{\mathcal F}$$  the terminal dicritical singularities of \fol. 

Set $NS(Z_{\mathcal F})\cong{\rm Pic}(Z_{\mathcal F})\otimes_{\mathbb{Z}} \mathbb{R}$ the Néron-Severi space of the surface $Z_{\mathcal F}$. If $S_0=\PP^2$ (respectively, $\fd$), it is a real vector space of dimension $n+1$ (respectively, $n+2$) (the Picard number of $Z_{\mathcal F}$). We identify the class $[D]$ in ${\rm Pic}(Z_{\mathcal F})$ of each divisor $D$ on $Z_{\mathcal F}$ with its image in $NS(Z_{\mathcal F})$.

The canonical sheaf of $S_0$ is ${\mathcal O}_{\PP^2}(-3)$ (respectively, ${\mathcal O}_{\mathbb{F}_\delta}(\delta-2,-2)$) \cite{Har} and, therefore, the canonical sheaf of $Z_{\mathcal F}$ is ${\mathcal O}_{Z_{\mathcal F}}(K_{Z_{\mathcal F}})$, where $K_{Z_{\mathcal F}}$ equals $-3L^*+\sum_{i=1}^n E_i^*$ (respectively, $(\delta-2)F^*-2M^*+\sum_{i=1}^n E_i^*$) whenever $S_0=\PP^2$ (respectively, $\fd$). Finally, the canonical sheaf of the strict transform $\widetilde{\mathcal F}$ of \fol on $\zf$ is ${\mathcal K}_{\widetilde{\mathcal F}}:={\mathcal O}_{Z_{\mathcal F}}(K_{\widetilde{\mathcal F}})$, where $K_{\widetilde{\mathcal F}}$ is
\begin{gather*}
(r-1)L^*-\sum_{i=1}^n (\nu_{p_i}(\mathcal F)+\epsilon_{p_i}(\mathcal F)-1)E_i^*,\,\text{ when }S_0=\PP^2\text{, and}\\
d_1F^*+d_2M^*-\sum_{i=1}^n (\nu_{p_i}(\mathcal F)+\epsilon_{p_i}(\mathcal F)-1)E_i^*\,\text{ otherwise,}
\end{gather*}
$\nu_{p_i}(\mathcal F)$ being the multiplicity at $p_i$ of the strict transform of \fol on the surface containing $p_i$, and $\epsilon_{p_i}(\mathcal F)$ equals $1$ (respectively, $0$) if $p_i$ is a terminal dicritical singularity (respectively, otherwise) \cite[Proposition 1.1]{cam-car-gar}.

Given a finite set $\Sigma$ of integral curves on $Z_{\mathcal{F}}$, $V(\Sigma)$ denotes the following subset of $NS(Z_{\mathcal{F}})$:
$$V(\Sigma):=\{[C]\mid C\in \Sigma\}\cup \left\{[K_{\widetilde{\mathcal F}}-K_{Z_{\mathcal F}}]\right\}\cup\left\{[\widetilde{E}_i]| \mbox{$E_i$ is non-dicritical}
\right\}.$$

The following concept will be useful.

\begin{definition}\label{indep}
A \textit{set of independent algebraic solutions of} \fol (of length $\sigma$) is a set $\Sigma=\{C_1,\ldots,C_\sigma\}$ of integral curves on $S_0$, {\fol}-invariant, and such that the elements in the set $V(\Sigma)$ are $\mathbb{R}$-linearly independent. The empty set is also an independent set of algebraic solutions.
\end{definition}

\begin{remark}\label{K-K}
If \fol is algebraically integrable, then the divisor $K_{\widetilde{\mathcal F}}-K_{Z_{\mathcal F}}$ is linearly equivalent to a linear combination of $\widetilde{\mathcal F}$-invariant curves \cite[Lemma 1.1]{serrano} (see also \cite[page 3621]{GalMon2014}). 
\end{remark}

\begin{remark}
Notice that there are $n-d$ non-dicritical exceptional divisors and, therefore, when $S_0=\PP^2$ (respectively, $\fd$), the cardinality $\sigma$ of a set $\Sigma$ of independent algebraic solutions is, at most, $d$ (respectively, $d+1$). This is because the Picard number of $Z_{{\mathcal F}}$ is $n+1$ (respectively, $n+2$). However, if ${\mathcal F}$ is algebraically integrable, $\sigma\leq d-1$ (respectively, $\sigma\leq d$) because the codimension of the linear span of $V(\Sigma)$ is, at least, $1$; indeed, in this case, $V(\Sigma)$ is contained in a hyperplane of $NS(Z_{\mathcal F})\cong \R^{n+1}$ (respectively, $\mathbb{R}^{n+2}$) by Lemma \ref{clave}.
\end{remark}

We just saw that when $S_0=\PP^2$ (respectively, $\fd$), $\sigma>d-1$ (respectively, $\sigma>d$) implies that the foliation \fol is not algebraically integrable. Therefore, \emph{throughout the rest of the paper, we will assume that all considered sets of independent algebraic solutions have cardinality $\sigma$ less than or equal to $d-1$ (respectively, $d$) in case that $S_0=\PP^2$ (respectively, $\fd$)}. We will not mention it explicitly. The maximum possible value for $\sigma$ is denoted by $\sigma_{\max}$. \medskip

The following proposition shows that, choosing a suitable Hirzebruch surface $\mathbb{F}_{\delta}$, one can find non-empty sets of independent algebraic solutions. For any non-negative integer $\delta$, let us denote by $C_{X_0}$ (respectively, $C_{Y_0}$) the curve on $\mathbb{F}_{\delta}$ with equation $X_0=0$ (respectively, $Y_0=0$).

\begin{proposition} \label{prop_X0}
Assume that \folc is the polynomial foliation on $\C^2$ defined by the $1$-form $\omega=A(x,y)dx+B(x,y)dy$ and let \fold be its extension to $\mathbb{F}_{\delta}$. Then, the following statements hold:

\begin{enumerate}
\item For all $\delta\in \mathbb{Z}_{\geq 0}$, except for at most one value, $C_{X_0}$ is an {\fold}-invariant curve. 
\item Assume $B(x,y)\neq 0$, then $C_{Y_0}$ is an {\fold}-invariant curve  if and only if $\deg_yA(x,y)\leqslant \deg_yB(x,y)+1$. Here, $\deg_y$ means degree in the variable $y$ of a polynomial in $(\C[x])[y]$.
\end{enumerate}
\end{proposition}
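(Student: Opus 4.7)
The plan is to verify invariance of $C_{X_0}$ and $C_{Y_0}$ directly on affine charts of $\mathbb{F}_\delta$ meeting them. For Part (1), I use the chart $U_{10}$ with local coordinates $u := X_0/X_1$ and $v := X_1^\delta Y_1/Y_0$ (so $C_{X_0}\cap U_{10} = \{u=0\}$ and $x = 1/u$, $y = u^\delta v$). For Part (2), I use the chart $U_{01}$ with local coordinates $p = x$ and $q = 1/y$ (so $C_{Y_0}\cap U_{01} = \{q=0\}$). In either chart, invariance of the boundary curve reduces to a divisibility condition on the coefficients of the pulled-back $1$-form after clearing the common $u$- or $q$-factor.

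For Part (2), the pullback of $\omega = A\,dx + B\,dy$ to $U_{01}$ equals
\[
A(p, 1/q)\,dp - q^{-2} B(p, 1/q)\,dq,
\]
an expression which does not involve $\delta$. Writing $A(p, 1/q) = q^{-\deg_y A}\bar{A}(p,q)$ and $B(p, 1/q) = q^{-\deg_y B}\bar{B}(p,q)$ with $\bar{A}(p,0)$ and $\bar{B}(p,0)$ nonzero, and multiplying through by $q^M$ for $M := \max(\deg_y A,\, \deg_y B+2)$, one obtains a regular reduced form. Its $dp$-coefficient is divisible by $q$, which is exactly the criterion for $\{q=0\}$ to be invariant, if and only if $M = \deg_y B + 2 > \deg_y A$, equivalently $\deg_y A \le \deg_y B + 1$.

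For Part (1), the pullback of $\omega$ to $U_{10}$ takes the form $P_\delta\,du + Q_\delta\,dv$ with
\[
P_\delta = -u^{-2} A(1/u, u^\delta v) + \delta\, v\, u^{\delta - 1}\, B(1/u, u^\delta v), \qquad Q_\delta = u^\delta B(1/u, u^\delta v).
\]
Setting $D_\delta(P) := \max\{i - \delta j : c_{ij}\neq 0\}$ for a polynomial $P = \sum c_{ij} x^i y^j$, the lowest $u$-powers of $Q_\delta$ and of the two summands of $P_\delta$ are, respectively, $k_Q := \delta - D_\delta(B)$, $-D_\delta(A)-2$, and $\delta - 1 - D_\delta(B) = k_Q - 1$. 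After dividing by the common $u$-factor, $\{u=0\}$ is invariant precisely when the lowest $u$-power $k_P$ of $P_\delta$ is strictly less than $k_Q$.

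Since for $\delta \geq 1$ the second summand of $P_\delta$ already has lowest $u$-power equal to $k_Q - 1 < k_Q$, the inequality $k_P < k_Q$ is automatic unless both summands contribute at that same lowest power and then cancel. This forces the twin conditions $D_\delta(B) - D_\delta(A) = \delta + 1$ and the polynomial identity $A_\alpha(v) = \delta\, v\, B_\beta(v)$, where $A_\alpha$ and $B_\beta$ are the restrictions of $A$ and $B$ to the edges of their Newton polygons dual to the weight $(1, \delta)$; the case $\delta = 0$, in which the second summand vanishes identically, has to be treated separately and yields the condition $\deg_x B \leq \deg_x A + 1$. The main obstacle is to establish that this combined condition admits at most one $\delta \in \mathbb{Z}_{\geq 0}$: on each linear piece of the piecewise-linear map $\delta \mapsto D_\delta(B) - D_\delta(A) - \delta$ the top parts $A_\alpha, B_\beta$ are fixed, so the cancellation identity becomes a single linear equation in $\delta$ with at most one solution, and a compatibility analysis across different pieces (using the joint combinatorics of the Newton polygons of $A$ and $B$) should then upgrade this to global uniqueness.
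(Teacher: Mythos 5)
Your chart-by-chart computation is a sound reformulation of what the paper does (the paper instead tracks divisibility by $X_0$ and $Y_0$ of the coefficients of the bihomogeneous $1$-form produced by its extension algorithm, Algorithm 2.3). Part (2) of your proposal is complete and correct: the transition to $U_{01}$ does not involve $\delta$, invariance of $\{q=0\}$ is governed by $q$-divisibility of the $dp$-coefficient of the reduced form, and this yields exactly $\deg_y A\le \deg_y B+1$. For Part (1), your reduction is also correct: with the stated invariance criterion ($u$ must divide the $dv$-coefficient of the reduced form), non-invariance of $C_{X_0}$ is equivalent to exact cancellation of the lowest $u$-order terms of the two summands of $P_\delta$, i.e.\ to $D_\delta(B)-D_\delta(A)=\delta+1$ together with $A_\alpha(v)=\delta\, v\, B_\beta(v)$.

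However, the step you yourself flag as ``the main obstacle'' is a genuine gap, and it cannot be closed: the exceptional set of $\delta$ need not be a singleton. Take $A=x^2y^2+3y$ and $B=x^3y+x$ (which are coprime). For $\delta=1$ the pullback of $\omega$ to $U_{10}$ reduces to $-2v\,du+(u+v)\,dv$, and for $\delta=3$ it reduces to $2v^2\,du+(uv+1)\,dv$; in both cases the $dv$-coefficient is not divisible by $u$, so $C_{X_0}$ fails to be invariant for both $\delta=1$ and $\delta=3$. In your language, the $\delta=1$ and $\delta=3$ faces of the Newton polygon of $A$ are the disjoint vertices $(2,2)$ and $(0,1)$, so the two cancellation identities $a_{ij}=\delta_1 b_{i+1,j-1}$ and $a_{i'j'}=\delta_2 b_{i'+1,j'-1}$ constrain disjoint sets of coefficients and are simultaneously satisfiable; the single-linear-equation argument is valid within one linear piece of $\delta\mapsto D_\delta(B)-D_\delta(A)-\delta$, but there is no compatibility forcing distinct pieces to be consistent, so global uniqueness fails. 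What your piecewise-linear analysis does legitimately give is that the exceptional set of $\delta$ is \emph{finite} (each linear piece contributes at most one bad $\delta$ and there are finitely many pieces), which is weaker than the statement but is all that the paper actually needs for its later use of this proposition. For transparency: the paper's own proof of Part (1) makes the same unsupported leap, asserting that $X_0$ can divide $\delta Y_1B_{\delta,1}-X_1A_{\delta,1}$ for only one value of $\delta$; the example above contradicts that assertion as well.
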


\begin{proof}
Let $\Omega_\delta=A_{\delta,0}dX_0+A_{\delta,1}dX_1+B_{\delta,0}dY_0+B_{\delta,1}dY_1$ the $1$-form defining \fold, where $A_{\delta,0},\;A_{\delta,1},\;B_{\delta,0},\;B_{\delta,1}$ are the output of Algorithm \ref{algoritmo}. $C_{X_0}$ is {\fold}-invariant if and only if $X_0$ is a factor of $A_{\delta,1},\;B_{\delta,0}$ and $B_{\delta,1}$.

If $\gamma_1=1$ in Step $(5)$ of Algorithm \ref{algoritmo}, then $X_0$ divides $A_{\delta,1}$ and $B_{\delta,1}$, and, by Step $(6)$, also divides $B_{\delta,0}$. If $\gamma_1=0$ in Step $(5)$, it means that $X_0$ divides $\delta Y_1B_{\delta,1}-X_1A_{\delta,1}$. If $X_0$ does not divide $B_{\delta,1}$ neither $A_{\delta,1}$, it may divide $\delta Y_1B_{\delta,1}-X_1A_{\delta,1}$ for only one single value of $\delta$. This proves Part ($1$).

To show Part ($2$), notice that $C_{Y_0}$ is {\fold}-invariant if and only if $Y_0$ is a factor of $A_{\delta,0},\;A_{\delta,1}$ and $B_{\delta,1}$. If, in Step $(3)$, $m_2\leq 0$, then, at the beginning of Step $(4)$, $Y_0$ divides $A_{\delta,1}$ and does not divide $B_{\delta,1}$. It means that $\gamma_2=1$ in Step $(4)$ and then $Y_0=0$ is invariant. If $m_2>0$ in Step $(3)$, $Y_0$ does not divide $A_{\delta,1}$ because $\gamma_2=0$ in Step $(4)$. As $\deg_yA(x,y)- \deg_yB(x,y)-1=m_2$ in Step $(3)$, the proof becomes complete.

\end{proof}

Next we introduce some notation that will be used later. Given a subset $A$ of $NS(Z_{\mathcal F})$, we set \[
A^\perp:=\{{\bold x}\in NS(Z_{\mathcal F})\,|\,{\bold a}\cdot {\bold x}=0 \text{ for all }{\bold a}\in A\}.\]
 Also, given a divisor $D$ on $Z_{\mathcal F}$, $[D]_{=1}$ denotes the affine hyperplane of $NS(Z_{\mathcal F})$ formed by those ${\bold x}\in NS(Z_{\mathcal F})$ such that $[D]\cdot {\bold x}=1$.

When ${\mathcal F}$ is algebraically integrable on $\PP^2$ (respectively, $\fd$) and $\Sigma$ is a set of independent algebraic solutions of ${\mathcal F}$, it holds that the class $[T_{{\mathcal F}}]$ of the divisor $\tf$ introduced at the end of Subsection \ref{rfi} belongs to $V(\Sigma)^\perp\cap [G]_{=1}$, where $G=L^*$ (respectively, $F^*$). This property follows from the next Lemma \ref{clave3}. 

The following definition refines the concept of set of independent algebraic solutions of a foliation on $S_0$.

\begin{definition}\label{def_restricted}
Let \fol be the extension to $S_0=\PP^2$ (respectively, $\fd$) of a polynomial foliation on $\C^2$. A \emph{restricted set of independent algebraic solutions of \fol} (of length $\sigma$) is a set $\Sigma$ of $\sigma$ independent algebraic solutions of \fol such that $[L^*]$ (respectively, $[F^*]$) does not belong to the linear span of $V(\Sigma)$.
\end{definition}

\begin{lemma}\label{clave3}
Let $\mathcal{F}$ be the extension to $S_0=\PP^2$ (respectively, $\fd$) of a polynomial foliation on $\mathbb{C}^2$. Consider a set $\Sigma=\left\{C_1, \ldots, C_\sigma\right\}$ of independent algebraic solutions of $\mathcal{F}$ and set $\ell:=d-\sigma-1$ (respectively, $d-\sigma$), $d$ being the number of terminal dicritical singularities of $\mathcal{F}$. Then the following statements hold:
\begin{itemize}
\item [(a)]  $\operatorname{dim}_{\mathbb{R}} V(\Sigma)^{\perp}=\ell+1$.
\item[(b)] If $\Sigma$ is a restricted set of independent algebraic solutions, then $V(\Sigma)^{\perp} \cap\left[G\right]_{=1}$ is an affine subspace of $NS(\zf)$ of dimension $\ell$, where $G=L^*$ (respectively, $F^*$).
\item[(c)] If $\Sigma=\emptyset$, then $\Sigma$ is a restricted set of independent algebraic solutions of $\mathcal{F}$.
\item[(d)] If $\mathcal{F}$ is algebraically integrable, then $\Sigma$ is a restricted set of independent algebraic solutions of $\mathcal{F}$, and the class of the divisor $T_{\mathcal{F}}$ (introduced below Proposition \ref{befe}) satisfies
$$
\left[T_{\mathcal{F}}\right] \in V(\Sigma)^{\perp} \cap\left[G\right]_{=1} ,
$$
where $G=L^*$ (respectively, $F^*$).
\end{itemize}
\end{lemma}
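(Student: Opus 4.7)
The plan is to address the four parts in order, leveraging the non-degeneracy of the intersection form on $NS(Z_\mathcal F)$ (from Hodge index on the smooth projective surface $Z_\mathcal F$) together with the structural facts gathered in Subsection~\ref{rfi}.

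\textbf{For (a)}, a direct count suffices: by Definition~\ref{indep}, the $\sigma+1+(n-d)$ classes in $V(\Sigma)$ are $\mathbb R$-linearly independent, and since $\dim NS(Z_\mathcal F)$ equals $n+1$ (resp.\ $n+2$) when $S_0=\PP^2$ (resp.\ $\fd$), the orthogonal-complement identity gives $\dim V(\Sigma)^\perp = d-\sigma = \ell+1$ (resp.\ $d-\sigma+1 = \ell+1$). \textbf{For (b)}, non-degeneracy of the intersection pairing yields $(V(\Sigma)^\perp)^\perp = \operatorname{span} V(\Sigma)$; hence $[G]\notin \operatorname{span} V(\Sigma)$ (the restrictedness hypothesis) is equivalent to the linear functional $\mathbf x\mapsto [G]\cdot \mathbf x$ being not identically zero on the $(\ell+1)$-dimensional linear subspace $V(\Sigma)^\perp$. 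Under that hypothesis the intersection $V(\Sigma)^\perp\cap [G]_{=1}$ is non-empty and of dimension $(\ell+1)-1 = \ell$.

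\textbf{For (d)}, I propose to use $[T_\mathcal F]$ itself as the witness. By Lemma~\ref{clave}, $[D_\mathcal F]\cdot[C]=0$ for every $\widetilde{\mathcal F}$-invariant curve $C$, so $[T_\mathcal F]=\frac{1}{b}[D_\mathcal F]$ pairs to zero against every $[C_i]\in V(\Sigma)$ (their strict transforms being $\widetilde{\mathcal F}$-invariant since the $C_i$ are $\mathcal F$-invariant) and against every $[\widetilde{E}_i]$ with $E_i$ non-dicritical. By Remark~\ref{K-K}, $K_{\widetilde{\mathcal F}}-K_{Z_\mathcal F}$ is linearly equivalent to a combination of $\widetilde{\mathcal F}$-invariant curves, so $[T_\mathcal F]\cdot[K_{\widetilde{\mathcal F}}-K_{Z_\mathcal F}]=0$ as well. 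A direct computation from (\ref{eqn_def_tf}) gives $[G]\cdot[T_\mathcal F]=1$ (using $L^{*2}=1$ in the $\PP^2$ case and $F^*\cdot(hF^*+M^*)=F^*\cdot M^*=1$ in the $\fd$ case). Hence $[T_\mathcal F]\in V(\Sigma)^\perp\cap [G]_{=1}$, and its very existence, combined with part~(b), forces $[G]\notin \operatorname{span} V(\Sigma)$, so $\Sigma$ is restricted.

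\textbf{For (c)}, the empty set is independent by the convention in Definition~\ref{indep}, so only restrictedness requires work, and this is the step I expect to be the main obstacle, since (c) cannot appeal to algebraic integrability the way (d) does. Assuming for contradiction that $[G]=c_0[K_{\widetilde{\mathcal F}}-K_{Z_\mathcal F}]+\sum_{i\in J} c_i[\widetilde{E}_i]$ (with $J$ indexing the non-dicritical exceptional divisors) and expanding both sides in the basis $\{[G]\}\cup\{[E_i^*]\}_i$ (or $\{[G],[M^*]\}\cup\{[E_i^*]\}_i$ in the $\fd$-case), the $[G]$-component equation forces $c_0\neq 0$. Matching $[E_k^*]$-coefficients at each terminal dicritical $k$---where $\epsilon_{p_k}=1$ and no $c_k[\widetilde{E}_k]$ contribution is available, since $k\notin J$---produces the relation $\sum_{i\in J,\,p_k\to p_i} c_i = -c_0(\nu_{p_k}(\mathcal F)+1)$; a top-down analysis of the analogous equations for $k\in J$, using the proximity structure of $\mathcal B_{\mathcal F}$, determines these $c_i$ with signs incompatible with the identity above, yielding the desired contradiction.
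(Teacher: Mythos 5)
Parts (a), (b) and (d) are correct and essentially the paper's argument: (a) is the same dimension count, (b) is the same consistency observation phrased via non-degeneracy of the intersection form, and in (d) your route is in fact slightly cleaner than the paper's — you exhibit $[T_{\mathcal F}]$ as an explicit element of $V(\Sigma)^{\perp}$ with $[G]\cdot[T_{\mathcal F}]=1$ and deduce restrictedness from that, whereas the paper first proves restrictedness by a separate contradiction argument (pairing a putative relation $[F^*]=\sum\gamma_j(\cdots)$ with $[D_{\mathcal F}]$ to force $b=0$ against Proposition \ref{befe}).

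Part (c) is where the gap is. Your argument stops at a plan: after forcing $c_0\neq 0$ from the $[G]$-component, the decisive step — ``a top-down analysis \ldots determines these $c_i$ with signs incompatible with the identity above'' — is asserted, not carried out. It can be completed (the recursion from the $E_k^*$-coefficients at non-dicritical $k$ reads $c_k=c_0\,\nu_{p_k}(\mathcal F)+\sum_{i\in J,\,p_k\to p_i}c_i$, so a \emph{bottom-up} induction on the proximity graph gives $c_i>0$ for all $i\in J$ once $c_0>0$, while your terminal-dicritical relation $\sum_{i\in J,\,p_k\to p_i}c_i=-c_0(\nu_{p_k}(\mathcal F)+1)<0$ equates a non-negative sum with a negative number), but as written it is not a proof. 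Moreover, for $S_0=\fd$ your own setup already contains a one-line finish that you overlooked: the $[M^*]$-component of the relation $[F^*]=c_0[K_{\widetilde{\mathcal F}}-K_{Z_{\mathcal F}}]+\sum_{i\in J}c_i[\widetilde E_i]$ gives $0=c_0(d_2+2)$, hence $c_0=0$, contradicting $c_0\neq 0$ — this is exactly the paper's argument (it pairs the relation with $[F^*]$, using $[F^*]^2=0$ and $[F^*]\cdot[\widetilde E_i]=0$, and then kills the remaining $c_i$ by pairing with the curvette classes $\hat E_j$, for which $\hat E_j\cdot\widetilde E_i=-\delta_{ij}$). Your heavier proximity analysis is genuinely needed only for $S_0=\PP^2$, where $[L^*]^2=1$ blocks the quick elimination of $c_0$; there the paper is itself terse (``runs similarly''), so completing your sign argument would actually add value — but you must complete it.
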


\begin{proof}
We prove the lemma for the case $S_0=\fd$. The case when $S_0=\PP^2$ runs similarly by replacing $F$ by $L$ and considering the integer $\ell$ and the corresponding divisors $K_{\mathcal F},\,K_{Z_{\mathcal F}}$ and $\df$.

Recall that \[
V(\Sigma)=\left\{[C]|C\in\Sigma\right\}\cup\{[K_{\widetilde{F}}-K_{S_{\mathcal{F}}}]\}\cup\{[\widetilde{E}_i]|E_i\text{ is non-dicritical}\}\]
and hence, $\#V(\Sigma)=\sigma+1+n-d$. Since the set $V(\Sigma)$ is free, $\sigma+1+n-d$ is the rank of the matrix whose rows are the coordinates (in the basis $\{F^*,M^*\}\cup\{E_i^*\}_{1\leq i\leq n}$) of the vectors in $V(\Sigma)$. Then, considering the system of linear equations
\begin{equation}\label{conditions}
{\mathbf a}\cdot {\mathbf x}=0,\;{\mathbf a}\in V(\Sigma),
\end{equation}
one gets $\dim_{\mathbb{R}}\; V(\Sigma)^\perp=n+2-\#V(\Sigma)=\ell+1$ which proves (a).

Part (b) follows from the fact that, if $\Sigma$ is a restricted set of independent algebraic solutions, then the system of linear equations that results from adding the equation $[F^*]\cdot {\mathbf x}=1$ to the set of equations given in (\ref{conditions}) is consistent.

Let us prove (c). Denote by $\{E_{k_j}\}_{j=1}^{n-d}$ the set of non-dicritical divisors. By definition $V(\emptyset)=\{[K_{\widetilde{\mathcal F}}-K_{S_{\mathcal{F}}}]\}\cup\{[\widetilde{E}_{k_j}]\}_{j=1}^{n-d}$ and we know that 
$K_{\widetilde{\mathcal{F}}}-K_{S_{\mathcal{F}}}$ is linearly equivalent to $$(d_1-\delta+2)F^*+(d_2+2)M^*-\sum_{i=1}^n (\nu_{p_i}(\mathcal{F})+\epsilon_{p_i}(\mathcal{F}))E_i^*.$$
It suffices to prove that $[F^*]$ is not a linear combination of the elements in $V(\emptyset)$. Indeed, reasoning by contradiction, if $[F^*] =\gamma_0 [K_{\widetilde{\mathcal{F}}}-K_{S_{\mathcal{F}}}]+\sum_{j=1}^{n-d} \gamma_j [\widetilde{E}_{k_j}]$, with $\gamma_j\in \mathbb{R}$ for all $j=0,\ldots,n-d$, then, taking intersection product with $[F^*]$ at both sides of the equality, one has that $0 = \gamma_0(d_2 + 2)$; therefore $\gamma_0 = 0$ because $d_2 + 2 > 0$ \cite[Proposition 3.2]{GalMonOliv}. Now, taking intersection product with $[\hat{E}_j]$ we conclude that $\gamma_j = 0$ for all $j=1,\ldots,n-d$ leading to a contradiction. 

To prove the first statement in (d), assume that \fol is algebraically integrable and let $\df=aF^*+bM^*-\sum_{i=1}^nm_iE_i^*$ be the characteristic divisor of \fol. As before, we are going to show that $[F^*]$ is not a linear combination of the elements in $V(\Sigma)$. Again reasoning by contradiction, suppose that $[F^*]=\gamma_0 [K_{\widetilde{\mathcal{F}}}-K_{S_{\mathcal{F}}}]+\sum_{j=1}^{n-d} \gamma_j [\widetilde{E}_{k_j}]+\sum_{r=1}^{\sigma}\gamma'_r[\widetilde{C}_r]$, with $\gamma_j,\gamma'_r\in \mathbb{R}$ for all $j=0,\ldots,n-d,\,r=1,\ldots,\sigma$, then, taking intersection product with $[\df]$ at both sides of the equality, one gets that $b=0$ (by Lemma \ref{clave} and Remark \ref{K-K}), leading, by Proposition \ref{befe}, to a contradiction.

Finally, we prove the last statement in (d). Notice that by Lemma \ref{clave}, $D_{\mathcal{F}}\cdot \widetilde{E}_i=0$ (respectively, $D_{\mathcal{F}}\cdot \widetilde{C}=0$) if $E_i$ is non-dicritical (respectively, $C\in \Sigma$). Moreover, $D_{\mathcal{F}}\cdot (K_{\widetilde{\mathcal{F}}}-K_{Z_{\mathcal F}})=0$ (see Remark \ref{K-K}), and therefore $[\tf]\in V(\Sigma)^\perp$. The fact that $\tf \cdot F^*=1$ concludes the proof.
 
\end{proof}

Let \fol be the extension to $S_0$ of a polynomial foliation on $\C^2$. We are going to use the following notation:
\begin{equation}\label{haches}
{\everymath={\displaystyle}
\begin{array}{c}
h_0:=1,\,h_j:=0,\,1\leq j\leq d\,\text{ if }S_0=\PP^2,\,\text{ and otherwise (}S_0=\fd)\\[5pt]
h_0:=-\frac{d_1-\delta+2}{d_2+2}-\delta ,\, h_j:=\frac{\sum\limits_{i=1}^n(\nu_{p_i}(\mathcal F)+\epsilon_{p_i}(\mathcal F))\lambda_{ij}}{d_2+2},\, 1\leq j \leq d,
\end{array}}
\end{equation}
where the values $\lambda_{ij}$ were defined in Subsection \ref{rfi}. Notice that, when $S_0=\fd$, $h_0, h_1,\ldots,h_d$ are well-defined (because $d_2>-2$ \cite[Proposition 3.2.]{GalMonOliv}) and that they can be computed from the dicritical reduction of singularities of \fol. In addition, the values $\lambda_{ij}$ can be computed from the proximity graph of $\mathcal{B}_{\mathcal F}$ (see Remark \ref{lambdas}).

Our next lemma considers the case $\Sigma=\emptyset$.

\begin{lemma}\label{clave4}

Set $Y(S_0)=V(\emptyset)^\perp \cap [L^*]_{=1}$ (respectively, $\left(V(\emptyset)\setminus\{[K_{\widetilde{\mathcal F}}-K_{Z_\mathcal{F}}]\}\right)^\perp$ $\cap [F^*]_{=1}$) when $S_0=\PP^2$ (respectively, $\fd$). An element ${\bold x}\in NS(Z_{\mathcal F})$ belongs to $Y(S_0)$ if and only if there exists a vector ${\alpha}=(\alpha_1,\ldots,\alpha_d)\in \mathbb{R}^d$ such that 
{\small{\begin{equation}\label{eee}
{\everymath={\displaystyle}
\begin{array}{c}
{\bold x}={\bold v}({\alpha}):=[L^*]- \sum_{i=1}^n\left(\sum_{j=1}^d \lambda_{ij}\alpha_j\right) [E^*_{i}]\\
\left(\text{respectively, }{\bold x}={\bold v}({\alpha}):=\left(h_0+\sum_{j=1}^d h_j\alpha_j\right)[F^*]+[M^*]- \sum_{i=1}^n\left(\sum_{j=1}^d \lambda_{ij}\alpha_j\right) [E^*_{i}]\right),
\end{array}}
\end{equation}}}
whenever $S_0=\PP^2$ (respectively, $\fd$).
\end{lemma}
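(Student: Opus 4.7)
The plan is to reduce both equivalences to routine linear algebra in $NS(Z_{\mathcal F})$ once a convenient basis and a single duality identity are in place. Concretely, I would rewrite any class $\mathbf{x} \in NS(Z_{\mathcal F})$ in the basis obtained by replacing $\{[E_i^*]\}_{i=1}^n$ with $\hat{\mathcal E} = \{[\hat{E}_r]\}_{r=1}^n$ introduced before (\ref{epsilonpico}); equivalently, $\mathbf{x} = c_0[L^*] + \sum_{r} d_r[\hat{E}_r]$ when $S_0 = \PP^2$, and $\mathbf{x} = c_F[F^*] + c_M[M^*] + \sum_r d_r[\hat{E}_r]$ when $S_0 = \fd$. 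This is legitimate because $\hat{\mathcal E}$ is a $\mathbb{Z}$-basis of $\bigoplus_i \mathbb{Z}E_i^*$ through $\hat{E}_r = \sum_i a_{ir}E_i^*$.

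The crucial technical input is the duality
\begin{equation*}
\hat{E}_r \cdot \widetilde{E}_i = -\delta_{ir}, \qquad i, r \in \{1,\ldots,n\}.
\end{equation*}
I would prove it by substituting $\widetilde{E}_i = E_i^* - \sum_{p_\ell \to p_i} E_\ell^*$ and $E_s^* \cdot E_t^* = -\delta_{st}$ to rewrite the pairing as $-a_{ir} + \sum_{p_\ell \to p_i} a_{\ell r}$, and then doing a three-way case analysis ($i=r$; $p_i \in (\mathcal{B}_{\mathcal F})^{p_r} \setminus \{p_r\}$; $p_i \notin (\mathcal{B}_{\mathcal F})^{p_r}$) that collapses the expression using the identities satisfied by the $a_{ir}$'s stated after (\ref{epsilonpico}): $a_{rr}=1$ together with $a_{\ell r}=0$ for every $p_\ell$ in the configuration proximate to $p_r$ in the first case; the proximity relation $a_{ir}=\sum_{p_\ell\to p_i} a_{\ell r}$ in the second; and, in the third, the observation that $a_{\ell r}\ne 0$ with $p_\ell\to p_i$ would force $p_i \in (\mathcal{B}_{\mathcal F})^{p_r}$, a contradiction.

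With this identity each defining condition of $Y(S_0)$ collapses into a linear constraint on the coefficients. The affine hyperplane condition $[L^*]\cdot \mathbf{x}=1$ (resp.\ $[F^*]\cdot \mathbf{x}=1$) fixes $c_0=1$ (resp.\ $c_M=1$), since $[L^*]$ (resp.\ $[F^*]$) is orthogonal to every $[\hat{E}_r]$. The orthogonality $\widetilde{E}_i\cdot \mathbf{x}=0$ at each non-dicritical $i$ becomes, via the duality, $d_i=0$ for such $i$, so only the $d$ coefficients indexed by the terminal dicritical singularities $d_{k_1},\ldots,d_{k_d}$ remain free. Setting $\alpha_j := -d_{k_j}$ and unwinding $\hat{E}_{k_j} = \sum_i \lambda_{ij}E_i^*$ recovers the exceptional part of $\mathbf{v}(\alpha)$ in (\ref{eee}), proving both directions of the equivalence in the $\PP^2$ case.

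The remaining point, specific to $S_0 = \fd$, is to determine $c_F$ from the condition $(K_{\widetilde{\mathcal F}}-K_{Z_{\mathcal F}})\cdot \mathbf{x}=0$. Expanding $K_{\widetilde{\mathcal F}}$ and $K_{Z_{\mathcal F}}$ as recalled before the lemma, and pairing with $\mathbf{x}$ using $\hat{E}_{k_j}\cdot E_i^* = -\lambda_{ij}$, this condition linearises into $c_F = h_0 + \sum_j h_j\alpha_j$ with $h_0, h_j$ as in (\ref{haches}); here the denominator $d_2+2$ is nonzero by \cite[Proposition 3.2]{GalMonOliv}, so the formula is well-defined. This reproduces exactly the $[F^*]$-coefficient in $\mathbf{v}(\alpha)$ and closes the equivalence. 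The main obstacle is clean bookkeeping in the duality case analysis and in the $K_{\widetilde{\mathcal F}}-K_{Z_{\mathcal F}}$ expansion; everything else is standard bilinear algebra.
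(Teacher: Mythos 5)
Your argument is correct and complete, but it is a genuinely more explicit route than the one the paper takes. The paper's proof is a soft dimension count: it checks by direct computation that every $\mathbf{v}(\alpha)$ lies in $Y(S_0)$, observes (by the same rank argument as in Lemma \ref{clave3}) that $Y(S_0)$ is an affine subspace of dimension $d$, and concludes equality because $\{\mathbf{v}(\alpha)\mid \alpha\in\R^d\}$ is a $d$-dimensional affine subspace contained in it. You instead solve the defining linear system of $Y(S_0)$ outright, and the price of admission is the duality $\hat{E}_r\cdot\widetilde{E}_i=-\delta_{ir}$; your three-case verification of that identity from $a_{rr}=1$, the proximity relation $a_{ir}=\sum_{p_\ell\rightarrow p_i}a_{\ell r}$, and the vanishing of $a_{ir}$ outside the complete chain is exactly right. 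What your approach buys is that both inclusions come out of a single computation, with no need to check separately that the defining constraints of $Y(S_0)$ are independent; what the paper's approach buys is brevity, at the cost of leaving the ``straightforward computations'' invisible. Your determination of $c_F$ from the $(K_{\widetilde{\mathcal F}}-K_{Z_{\mathcal F}})$-orthogonality, recovering $h_0+\sum_j h_j\alpha_j$ with the nonvanishing of $d_2+2$, is also correct.

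One caveat: what you prove is not the statement as literally printed, because the two cases in the definition of $Y(S_0)$ appear to be transposed in the paper. You impose $(K_{\widetilde{\mathcal F}}-K_{Z_{\mathcal F}})\cdot\mathbf{x}=0$ only in the Hirzebruch case and drop it for $\PP^2$; the printed definition does the opposite. Your reading is the correct one: for $S_0=\PP^2$ the class $\mathbf{v}(\alpha)=[L^*]-\sum_i\bigl(\sum_j\lambda_{ij}\alpha_j\bigr)[E_i^*]$ pairs with $[K_{\widetilde{\mathcal F}}-K_{Z_{\mathcal F}}]$ to give $(r+2)-\sum_{i,j}(\nu_{p_i}(\mathcal F)+\epsilon_{p_i}(\mathcal F))\lambda_{ij}\alpha_j$, which is not identically zero, so the inclusion $W\subseteq Y$ would fail with the printed definition; and for $S_0=\fd$ the set $\bigl(V(\emptyset)\setminus\{[K_{\widetilde{\mathcal F}}-K_{Z_{\mathcal F}}]\}\bigr)^\perp\cap[F^*]_{=1}$ has affine dimension $d+1$ and cannot coincide with the $d$-parameter family $\{\mathbf{v}(\alpha)\}$. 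The version you prove is the one actually invoked in the proof of Theorem \ref{teo1}. So there is no gap in your argument, but you have silently corrected the statement, and that correction should be made explicit rather than left implicit.
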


\begin{proof}

Let $W(S_0)$ be the affine subspace of $NS(Z_{\mathcal F})$ given by the set $\{{\mathbf v}(\alpha)\mid \alpha\in \mathbb{R}^d\}$, where ${\mathbf v}(\alpha)$ is the $\R$-divisor on $S_0$ defined in the statement. On the one hand, straightforward computations show that $W(S_0)\subseteq Y(S_0)$. On the other hand, a similar reasoning to that of the proof of Lemma \ref{clave3} proves that the dimension of the affine subspace $Y(S_0)$ is equal to $d$. This concludes the proof.

\end{proof}

Keep the previous notation, given a restricted set $\Sigma=\{C_1,\ldots,C_\sigma\}$ of independent algebraic solutions of \fol, by Lemma \ref{clave4} it holds that when $S_0=\PP^2$ (respectively, $\fd$) ${\bold x}\in V(\Sigma)^\perp \cap [G]_{=1}$, where $G=L^*$ (respectively, $F^*$), if and only if ${\bold x}={\bold v}(\alpha)$, for some solution $\alpha$ of the following system of $\sigma+1$ (respectively, $\sigma$) linear equations in $d$ real unknowns $\alpha_1,\ldots,\alpha_d$: 
{\small{ $${\bold v}({\alpha})\cdot [K_{\widetilde{\mathcal F}}-K_{Z_\mathcal{F}}]=0,{\bold v}({\alpha})\cdot [\widetilde{C}_i]=0,\,1\leq i\leq\sigma\text{ (respectively, }{\bold v}({\alpha})\cdot [\widetilde{C}_i]=0,\,1\leq i\leq\sigma).$$}}
  By Lemma \ref{clave3}, the dimension of $V(\Sigma)^\perp \cap [G]_{=1}$ (as an affine subspace) is  $\ell=d-\sigma-1$ when $S_0=\PP^2$ and $d-\sigma$ otherwise. Hence, using Gauss-Jordan elimination (and, possibly, reordering the infinitely near terminal dicritical singularities $p_{k_1},\ldots,p_{k_d}$), we conclude the existence of rational numbers $\mu_{k,s}$, $0\leq k\leq \ell$, $\ell+1\leq s\leq d$, such that the solution set of the mentioned system is the set
{\small{$$\Delta:=\left\{ \left(\alpha_1,\;\ldots, \alpha_\ell,\; \mu_{0,\ell+1}+\sum_{k=1}^\ell \mu_{k, \ell+1} \alpha_k, \ldots, \mu_{0,d}+\sum_{k=1}^\ell \mu_{k,d} \alpha_k\right)\mid \alpha_1,\ldots,\alpha_\ell \in \mathbb{R}\right\}.$$}}
Thus, with the given definition of $G$ depending on the considered surface $S_0$, we have deduced that
\begin{equation}\label{v}
V(\Sigma)^\perp\cap [G]_{=1}=\{{\bold v}(\alpha)\mid \alpha\in \Delta\}.
\end{equation}

With the above notation, especially that after (\ref{tt2}) and in (\ref{haches}), let us consider the following rational numbers:
\begin{equation}\label{eqn_Lmbd}
{\everymath={\displaystyle}
\begin{array}{ll}
\Lambda_{i0}&:=\sum_{s=\ell+1}^d \lambda_{is}\mu_{0,s}\text{, for } 1\leq i\leq n,\\
\Lambda_{ik}&:=\lambda_{ik}+\sum_{s=\ell+1}^d \lambda_{is}\mu_{k,s}\text{, for } 1\leq i\leq n, 1\leq k\leq \ell,\\
H_0&:=h_0+\sum_{s=\ell+1}^d h_s\mu_{0,s},\;\; \mbox{ and }\\
H_k&:=h_k+\sum_{s=\ell+1}^d h_s\mu_{k,s}\text{, for } 1\leq k\leq \ell.
\end{array}}
\end{equation}
Equalities (\ref{v}) and (\ref{epsilonpico}) lead to the following result, which gives an explicit description of $V(\Sigma)^\perp\cap [G^*]_{=1}$ in terms of free real parameters $\alpha_1,\ldots,\alpha_\ell$.

\begin{proposition}\label{39}
Let \fol be the extension to $S_0$ of a polynomial foliation on $\C^2$.  Let $\Sigma$ be a restricted set of independent algebraic solutions of ${\mathcal F}$ of length $\sigma$. Keep the above notation, in particular, set $G=L^*$ (respectively, $F^*$) and $\ell=d-\sigma-1$ (respectively, $d-\sigma$) when $S_0=\PP^2$ (respectively, $\fd$), $d$ being the cardinality of the set of terminal dicritical singularities. 

Then, an element ${\bold x}\in NS(Z_{{\mathcal F}})$ belongs to $V(\Sigma)^\perp\cap [G]_{=1}$ if and only if there exist $\alpha_1,\ldots,\alpha_\ell\in \mathbb{R}$ such that 
\begin{gather*}
{\bold x}=[L^*]-\sum_{i=1}^n \left(\Lambda_{i0}+\sum_{k=1}^\ell \Lambda_{ik}\alpha_k\right) [E_i^*]\,\\
\left(\text{respectively, }{\bold x}=\left(H_0+\sum_{k=1}^\ell H_k\alpha_k\right)[F^*]+[M^*]-\sum_{i=1}^n \left(\Lambda_{i0}+\sum_{k=1}^\ell \Lambda_{ik}\alpha_k\right) [E_i^*]\right).
\end{gather*}
\end{proposition}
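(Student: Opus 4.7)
The plan is to obtain the statement as a direct algebraic consequence of Lemma \ref{clave4} combined with the parametrization of the solution set $\Delta$ given just before equation (\ref{v}). Specifically, Lemma \ref{clave4} tells us that every element of $V(\emptyset)^\perp\cap[G]_{=1}$ has the form $\bold{v}(\alpha)$ displayed in (\ref{eee}), and equality (\ref{v}) refines this by identifying $V(\Sigma)^\perp\cap[G]_{=1}$ with $\{\bold{v}(\alpha)\mid \alpha\in\Delta\}$. Since $\Delta$ is described via $\ell$ free real parameters $\alpha_1,\ldots,\alpha_\ell$ and rational coefficients $\mu_{k,s}$, the proposition is essentially the explicit rewriting of $\bold{v}(\alpha)$ after substituting those parameters.

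The first step is to recall that by (\ref{v}), $\bold{x}\in V(\Sigma)^\perp\cap[G]_{=1}$ if and only if $\bold{x}=\bold{v}(\alpha)$ for some tuple $\alpha=(\alpha_1,\ldots,\alpha_d)$ belonging to the set $\Delta$, i.e.\ such that $\alpha_s=\mu_{0,s}+\sum_{k=1}^{\ell}\mu_{k,s}\alpha_k$ for every $s$ with $\ell+1\leq s\leq d$, while $\alpha_1,\ldots,\alpha_\ell$ are free in $\mathbb{R}$. The second step is the substitution itself: for each $i\in\{1,\ldots,n\}$, split the sum $\sum_{j=1}^{d}\lambda_{ij}\alpha_j$ into the free part $\sum_{k=1}^{\ell}\lambda_{ik}\alpha_k$ and the constrained part $\sum_{s=\ell+1}^{d}\lambda_{is}\alpha_s$, replace each $\alpha_s$ in the latter by its expression in terms of the $\mu_{k,s}$ and the free $\alpha_k$, and then regroup the result according to powers of the free parameters. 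Using the definitions in (\ref{eqn_Lmbd}) the constant term becomes exactly $\Lambda_{i0}$ and the coefficient of $\alpha_k$ becomes exactly $\Lambda_{ik}$, yielding the $[E_i^*]$-coefficient stated in the proposition. This suffices when $S_0=\PP^2$.

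In the Hirzebruch case $S_0=\fd$, one repeats the same manipulation on the $[F^*]$-coefficient appearing in (\ref{eee}). Writing $h_0+\sum_{j=1}^{d}h_j\alpha_j=h_0+\sum_{k=1}^{\ell}h_k\alpha_k+\sum_{s=\ell+1}^{d}h_s\alpha_s$ and inserting the expressions for the constrained $\alpha_s$, one obtains, after regrouping, $H_0+\sum_{k=1}^{\ell}H_k\alpha_k$ with $H_0,H_k$ as in (\ref{eqn_Lmbd}). The $[M^*]$ term is unaffected by the substitution and the computation of the $[E_i^*]$-coefficients proceeds verbatim as in the projective plane case.

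This is essentially a bookkeeping argument, so no real obstacle is expected; the only thing to verify with care is that the free parameters $\alpha_1,\ldots,\alpha_\ell$ actually range independently over $\mathbb{R}$ (so that the parametrization is both necessary and sufficient), which is guaranteed by the fact that $\dim_{\mathbb{R}} V(\Sigma)^\perp\cap[G]_{=1}=\ell$ as established in Lemma \ref{clave3}(b) and used to derive (\ref{v}). Thus the proposition follows.
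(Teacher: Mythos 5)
Your proposal is correct and follows essentially the same route as the paper, which derives Proposition \ref{39} directly from the parametrization $V(\Sigma)^\perp\cap[G]_{=1}=\{\mathbf{v}(\alpha)\mid\alpha\in\Delta\}$ in (\ref{v}) together with (\ref{epsilonpico}) and the definitions in (\ref{eqn_Lmbd}), i.e.\ by substituting the constrained coordinates $\alpha_s=\mu_{0,s}+\sum_{k=1}^{\ell}\mu_{k,s}\alpha_k$ into $\mathbf{v}(\alpha)$ and regrouping. Your closing remark that the independence of the free parameters rests on the dimension count $\dim\bigl(V(\Sigma)^\perp\cap[G]_{=1}\bigr)=\ell$ from Lemma \ref{clave3}(b) is exactly the point the paper relies on when setting up $\Delta$ via Gauss--Jordan elimination.
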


The above proposition suggests to introduce the following family of $\R$-divisors depending on a parameter vector $\alpha=(\alpha_1,\ldots,\alpha_\ell)\in \mathbb{R}^\ell$:
\begin{equation}\label{eqn_Talfa}
{\everymath={\displaystyle}
\begin{array}{c}
T_{\alpha}:=L^*-\sum_{i=1}^n \left(\Lambda_{i0}+\sum_{k=1}^\ell \Lambda_{ik}\alpha_k\right) E_i^*,\,\text{ if }S_0=\PP^2\text{ and}\\
T_{\alpha}:=\left(H_0+\sum_{k=1}^\ell H_k\alpha_k\right)F^*+M^*-\sum_{i=1}^n \left(\Lambda_{i0}+\sum_{k=1}^\ell \Lambda_{ik}\alpha_k\right) E_i^*,\,\text{ if }S_0=\fd.
\end{array}}
\end{equation}
The following result keeps the notation as in Proposition \ref{39} and shows that the above family of $\R$-divisors contains an interesting distinguished element.

\begin{theorem}\label{teo1}
Assume that ${\mathcal F}$ is algebraically integrable, let $\Sigma$ be a restricted set of independent algebraic solutions of ${\mathcal F}$ and consider the $\Q$-divisor $\tf$ defined in (\ref{eqn_def_tf}). Then 
\begin{itemize}
\item[(a)] There exists a specific vector $\alpha=(\alpha_1,\ldots,\alpha_\ell)$ such that $\alpha_i$ is a strictly positive rational number for all $i=1,\ldots,\ell$, and $T_{{\mathcal F}}=T_{\alpha}$.
\item[(b)] Moreover, for such an $\alpha$,
\begin{multline*}
T_{\alpha}^2=-\sum_{k,k'=1}^\ell\left(\sum_{i=1}^n \Lambda_{ik}\Lambda_{ik'}\right)\alpha_k\alpha_{k'} -2 \sum_{k=1}^\ell \left(\sum_{i=1}^n\Lambda_{i0}\Lambda_{ik}\right)\alpha_k-\sum_{i=1}^n\Lambda_{i0}^2+1=0\\
\text{ if }S_0=\PP^2,
\end{multline*}
\begin{multline*}
T_{\alpha}^2=-\sum_{k,k'=1}^\ell\left(\sum_{i=1}^n \Lambda_{ik}\Lambda_{ik'}\right)\alpha_k\alpha_{k'} + \sum_{k=1}^\ell \left(2H_k-2\left(\sum_{i=1}^n\Lambda_{i0}\Lambda_{ik}\right)\right)\alpha_k+2H_0\\
-\sum_{i=1}^n\Lambda_{i0}^2+\delta=0\text{ if }S_0=\fd.
\end{multline*}
\end{itemize}

\end{theorem}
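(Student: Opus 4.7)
My plan is to derive both parts as direct consequences of what has already been established about $T_{\mathcal F}$ combined with the parametrization in Proposition \ref{39}.

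For Part (a), I would start from Lemma \ref{clave3}(d), which gives $[T_{\mathcal F}]\in V(\Sigma)^\perp\cap [G]_{=1}$ where $G=L^*$ or $F^*$. Proposition \ref{39} then asserts that every element of $V(\Sigma)^\perp\cap [G]_{=1}$ has the form $T_\alpha$ for some $\alpha=(\alpha_1,\ldots,\alpha_\ell)\in\R^\ell$; applying this to $[T_{\mathcal F}]$ yields an $\alpha$ with $T_{\mathcal F}=T_\alpha$ (the equality in $NS(Z_{\mathcal F})$ already forces equality as $\R$-divisors, since both are explicit linear combinations of the same basis $\{L^*,E_i^*\}$ or $\{F^*,M^*,E_i^*\}$). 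To see that this $\alpha$ has strictly positive rational entries, I would trace the identification. The formula (\ref{tt2}) exhibits $T_{\mathcal F}$ with coefficients $\beta_j=\rho_{k_j}/b$ on the basis vectors $\hat{E}_{k_j}$; these coefficients are strictly positive rationals because $\rho_{k_j}>0$ (the strict positivity was justified just after (\ref{epsilonpico}) from $\widetilde{E}_{k_j}$ not being $\widetilde{\mathcal F}$-invariant together with nefness of $D_{\mathcal F}$) and $b>0$ (by Bézout's Theorem or Proposition \ref{befe}). The vector $\beta=(\beta_1,\ldots,\beta_d)$ therefore lies in the solution set $\Delta$ of the linear system defining $V(\Sigma)^\perp\cap[G]_{=1}$ (after the Gauss-Jordan reordering of $p_{k_1},\ldots,p_{k_d}$), and the first $\ell$ of its entries are precisely the free parameters; these are the $\alpha_k$ of the statement, and they inherit positivity and rationality from the $\beta_j$.

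For Part (b), the two displayed expressions for $T_\alpha^2$ are a routine expansion. Using the standard intersection numbers on $Z_{\mathcal F}$ (recalled in Subsection \ref{rfi}), namely $(L^*)^2=1$, $(F^*)^2=0$, $(M^*)^2=\delta$, $F^*\cdot M^*=1$, $E_i^*\cdot E_j^*=-\delta_{ij}$, and $L^*\cdot E_i^*=F^*\cdot E_i^*=M^*\cdot E_i^*=0$, applied to (\ref{eqn_Talfa}), I would compute
\[
T_\alpha^2=1-\sum_{i=1}^n\Bigl(\Lambda_{i0}+\sum_{k=1}^\ell\Lambda_{ik}\alpha_k\Bigr)^{\!2}
\]
in the $\PP^2$ case, and
\[
T_\alpha^2=2\Bigl(H_0+\sum_{k=1}^\ell H_k\alpha_k\Bigr)+\delta-\sum_{i=1}^n\Bigl(\Lambda_{i0}+\sum_{k=1}^\ell\Lambda_{ik}\alpha_k\Bigr)^{\!2}
\]
in the $\fd$ case. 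Expanding the square and collecting by monomials in $\alpha_k\alpha_{k'}$, $\alpha_k$ and constants produces exactly the two formulas in the statement.

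Finally, the vanishing $T_\alpha^2=0$ is immediate once Part (a) is in hand: $T_\alpha=T_{\mathcal F}=\frac{1}{b}D_{\mathcal F}$, so $T_\alpha^2=\frac{1}{b^2}D_{\mathcal F}^2$, and this is $0$ by Lemma \ref{clave2}(a). The only delicate step above is the positivity/rationality argument in Part (a); I expect the main obstacle there is to ensure that the $\ell$ distinguished parameters surviving Gauss-Jordan elimination can indeed be taken to be (a subset of) the $\beta_j$'s. This is handled by the clause "possibly reordering the infinitely near terminal dicritical singularities" in the discussion preceding the definition of $\Delta$, which grants us freedom to place the positive rational coordinates $\beta_1,\ldots,\beta_\ell$ in the free positions.
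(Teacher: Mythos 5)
Your proposal is correct and follows essentially the same route as the paper: Part (a) is obtained by placing $[T_{\mathcal F}]$ in the parametrized family via Lemma \ref{clave3}(d)/Lemma \ref{clave4} and Proposition \ref{39}, identifying the parameters with the strictly positive rationals $\beta_j=\rho_{k_j}/b$ from (\ref{tt2}) after the Gauss--Jordan reordering, and Part (b) is the intersection-form expansion of (\ref{eqn_Talfa}) combined with $D_{\mathcal F}^2=0$ from Lemma \ref{clave2}. The only difference is that you spell out the expansion of $T_\alpha^2$ explicitly, which the paper leaves implicit.
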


\begin{proof}
Let us prove Part (a). Since $\tf\in \left(V(\emptyset)\setminus\{[K_{\widetilde{\mathcal F}}-K_{Z_\mathcal{F}}]\}\right)^\perp \cap [L^*]_{=1}$ (respectively, $V(\emptyset)\cap[F^*]_{=1}$) when $S_0=\PP^2$ (respectively, $\fd$), by Lemma \ref{clave4}, there exist $\alpha_1,\ldots,\alpha_d\in \mathbb{R}$ such that $T_{{\mathcal F}}={\bold v}(\alpha_1,\ldots,\alpha_d)$ (see Lemma \ref{clave4}). These values $\alpha_1,\ldots,\alpha_d$ coincide with the values $\beta_1,\ldots,\beta_d$ in Equality (\ref{tt2}) and, therefore, they are strictly positive rational numbers. Now, Proposition \ref{39} and the paragraph after the proof of Lemma \ref{clave4} show that, after reordering (if necessary) the infinitely near dicritical singularities (and, consequently, the values $\alpha_1,\ldots,\alpha_d$), it holds the equality of divisors $T_{{\mathcal F}}=T_\alpha$, where $\alpha=(\alpha_1,\ldots,\alpha_\ell)$ and Part (a) is proved.

Part (b) follows from by Lemma \ref{clave2}.
    
\end{proof}

Keep the above notation where we have a foliation \fol and a restricted set $\Sigma$ of independent algebraic solutions. $\Sigma$ allows us to compute the values in (\ref{eqn_Lmbd}). Then consider the following system of linear equations (whose unknowns are $\alpha_1,\ldots,\alpha_\ell$):
\begin{equation}\label{system1}
{\everymath={\displaystyle}
\left\lbrace\begin{array}{c}
\sum_{k= 1}^\ell \left(\sn \Lambda_{i1}\Lambda_{ik}\right)\alpha_k=H_1-\left(\sum_{i=1}^n\Lambda_{i0}\Lambda_{i1}\right)\\
\sum_{k=1}^\ell \left(\sn \Lambda_{i2}\Lambda_{ik}\right)\alpha_k=H_2-\left(\sum_{i=1}^n\Lambda_{i0}\Lambda_{i2}\right)\\
\vdots\\
\sum_{k=1}^\ell \left(\sn \Lambda_{i\ell}\Lambda_{ik}\right)\alpha_k=H_\ell-\left(\sum_{i=1}^n\Lambda_{i0}\Lambda_{i\ell}\right).
\end{array}\right.}
\end{equation}
The coefficient matrix of (\ref{system1}) is the Gram matrix $\mathfrak{G}$ of the set of vectors $\{(\Lambda_{1j},\ldots,\Lambda_{nj})\}_{j=1}^\ell\subseteq \mathbb{R}^\ell$ with respect to the Euclidean inner product. These vectors are linearly independent and, therefore, $\mathfrak{G}$ is a positive definite matrix. In particular, \emph{System (\ref{system1}) has a unique solution $\alpha_{{\mathcal F}}^\Sigma$}. Notice that this value is a rational vector.

Set $h:\mathbb{R}^\ell\rightarrow \mathbb{R}$ the map defined by $h[\alpha=(\alpha_1,\ldots,\alpha_\ell)]:=T_{\alpha}^2$. It satisfies the following result.

\begin{lemma}\label{lemm}
The map $h$ has an absolute maximum, which is only reached at $\alpha_{{\mathcal F}}^\Sigma$.
\end{lemma}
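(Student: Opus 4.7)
The plan is to observe that, by the explicit formulas displayed in Theorem \ref{teo1}(b), the function $h(\alpha_1,\ldots,\alpha_\ell) = T_\alpha^2$ is a quadratic polynomial in $\alpha_1,\ldots,\alpha_\ell$ whose homogeneous degree-two part has symmetric matrix $-\mathfrak{G}$, where $\mathfrak{G}$ is the Gram matrix (with respect to the Euclidean inner product) of the vectors $v_k := (\Lambda_{1k},\ldots,\Lambda_{nk})\in\mathbb{R}^n$, $k = 1,\ldots,\ell$. Since these vectors are linearly independent, as noted immediately after System (\ref{system1}), $\mathfrak{G}$ is positive definite, and hence $h$ is strictly concave on $\mathbb{R}^\ell$.

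A strictly concave quadratic function on $\mathbb{R}^\ell$ attains a unique absolute maximum, reached at its unique critical point. I would therefore compute $\nabla h$ and solve $\nabla h = 0$. Differentiating the formulas in Theorem \ref{teo1}(b), for both $S_0 = \PP^2$ and $S_0 = \fd$ one obtains, for every $1\leq k\leq\ell$,
$$\frac{1}{2}\,\frac{\partial h}{\partial \alpha_k} \;=\; -\sum_{k'=1}^\ell \left(\sum_{i=1}^n \Lambda_{ik}\Lambda_{ik'}\right)\alpha_{k'} \;+\; H_k \;-\; \sum_{i=1}^n \Lambda_{i0}\Lambda_{ik},$$
using that, in the case $S_0 = \PP^2$, the coefficients $H_k$ vanish for all $k\geq 1$ by definition (\ref{haches}) (since there $h_j = 0$ for $1\leq j\leq d$). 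Equating these partial derivatives to zero for all $k$ yields exactly System (\ref{system1}).

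Finally, I would invoke the paragraph preceding the statement of the lemma: the coefficient matrix of System (\ref{system1}) is precisely $\mathfrak{G}$, which is positive definite, so the system admits a unique solution, namely $\alpha_{\mathcal{F}}^\Sigma$. Consequently $h$ attains its unique global maximum at $\alpha_{\mathcal{F}}^\Sigma$, proving the lemma. The only mild obstacle is making sure that the linear part of $h$ matches the right-hand side of System (\ref{system1}) uniformly across the two cases $S_0 = \PP^2$ and $S_0 = \fd$; this is resolved cleanly by the convention $h_j = 0$ for $1\leq j\leq d$ in the projective case, which collapses the $H_k$-contributions there and makes the two formulas in Theorem \ref{teo1}(b) produce the same gradient expression above.
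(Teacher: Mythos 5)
Your proposal is correct and follows essentially the same route as the paper: the paper likewise observes that $h$ is the sum of an affine map and a negative definite quadratic form with matrix $-\mathfrak{G}$, hence bounded above, and that its Jacobian and Hessian show the maximum is attained exactly at the unique solution $\alpha_{\mathcal F}^\Sigma$ of System (\ref{system1}). Your explicit verification that $\nabla h=0$ reproduces System (\ref{system1}) in both cases (using $H_k=0$ when $S_0=\PP^2$) is just a more detailed write-up of the same computation.
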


\begin{proof}
The map $h$ is a sum of an affine map and a negative definite quadratic form whose matrix is $-\mathfrak{G}$, where $\mathfrak{G}$ is the Gram matrix given by System (\ref{system1}). Thus, $h$ is bounded from above. The Jacobian vector and the Hessian matrix of $h$ (which is $-2\mathfrak{G}$) show that $h$ has a local, and hence absolute, maximum, only reached at $\alpha_{{\mathcal F}}^\Sigma$.
    
\end{proof}

Now we state the main result in this section, which supports our algorithms for deciding about algebraic integrability.

\begin{theorem}\label{teo2}
Let $\Sigma$ be a restricted set of independent algebraic solutions of the extension \fol to $S_0$ of a polynomial foliation on $\C^2$. Set $T_\alpha$ the divisor depending on a parameter vector $\alpha=(\alpha_1,\ldots,\alpha_\ell)$ introduced in (\ref{eqn_Talfa}) and keep the above notation.
\begin{itemize}
    \item[(a)] If  $T^2_{\alpha_{{\mathcal F}}^\Sigma}<0$, then \fol is not algebraically integrable.
    \item [(b)] If $T^2_{\alpha_{{\mathcal F}}^\Sigma}=0$ and \fol is algebraically integrable, then the divisor $\tf$ introduced in (\ref{eqn_def_tf}) satisfies $T_{{\mathcal F}}=T_{\alpha_{{\mathcal F}}^\Sigma}$ and  $\alpha_{{\mathcal F}}^\Sigma\in (\mathbb{Q}_{>0})^\ell$.
\end{itemize}
\end{theorem}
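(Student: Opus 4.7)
The plan is to derive both parts of Theorem \ref{teo2} as essentially immediate consequences of Theorem \ref{teo1} and Lemma \ref{lemm}. The strategy is to contrast the optimization information provided by Lemma \ref{lemm} (the map $h\colon\alpha\mapsto T_\alpha^2$ attains its absolute maximum at the unique point $\alpha_{\mathcal F}^\Sigma$) with the arithmetic/geometric information provided by Theorem \ref{teo1} (algebraic integrability forces the existence of an $\alpha\in(\mathbb{Q}_{>0})^\ell$ with $T_{\mathcal F}=T_\alpha$ and $T_\alpha^2=0$).

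For part (a) I would argue by contrapositive. Assuming \fol is algebraically integrable, Theorem \ref{teo1}(a) produces a vector $\alpha^\star\in(\mathbb{Q}_{>0})^\ell$ with $T_{\mathcal F}=T_{\alpha^\star}$, and Theorem \ref{teo1}(b) gives $h(\alpha^\star)=T_{\alpha^\star}^2=0$. Since $\alpha_{\mathcal F}^\Sigma$ is an absolute maximizer of $h$ by Lemma \ref{lemm}, we have $T_{\alpha_{\mathcal F}^\Sigma}^2=h(\alpha_{\mathcal F}^\Sigma)\geq h(\alpha^\star)=0$, contradicting the hypothesis $T_{\alpha_{\mathcal F}^\Sigma}^2<0$. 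Hence \fol cannot be algebraically integrable.

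For part (b), assume \fol is algebraically integrable and $T_{\alpha_{\mathcal F}^\Sigma}^2=0$. Again Theorem \ref{teo1} yields $\alpha^\star\in(\mathbb{Q}_{>0})^\ell$ with $T_{\mathcal F}=T_{\alpha^\star}$ and $h(\alpha^\star)=0$. Now Lemma \ref{lemm} asserts that the absolute maximum of $h$ is attained \emph{only} at $\alpha_{\mathcal F}^\Sigma$; since $h(\alpha^\star)=0=h(\alpha_{\mathcal F}^\Sigma)$ and this common value is the absolute maximum, uniqueness of the maximizer forces $\alpha^\star=\alpha_{\mathcal F}^\Sigma$. Consequently $T_{\mathcal F}=T_{\alpha_{\mathcal F}^\Sigma}$, and $\alpha_{\mathcal F}^\Sigma=\alpha^\star\in(\mathbb{Q}_{>0})^\ell$.

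There is no real obstacle here beyond stitching the two prior results together; the only subtle point to check is that the $\alpha^\star$ produced by Theorem \ref{teo1}(a) has exactly $\ell$ components matching the parametrization used in Lemma \ref{lemm}, which is guaranteed by the explicit description of $V(\Sigma)^\perp\cap[G]_{=1}$ established in Proposition \ref{39} and the paragraph preceding it (the reordering of the dicritical singularities done there ensures that the $\ell$ free parameters $\alpha_1,\ldots,\alpha_\ell$ of $T_\alpha$ coincide with the first $\ell$ coordinates of the rational vector $(\beta_1,\ldots,\beta_d)$ from (\ref{tt2})). Once this bookkeeping is verified, parts (a) and (b) each reduce to a one-line comparison with the maximum of $h$.
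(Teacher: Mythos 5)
Your proposal is correct and follows essentially the same route as the paper's own (very terse) proof: both parts are obtained by combining Theorem \ref{teo1} (which supplies a strictly positive rational $\alpha$ with $T_{\mathcal F}=T_\alpha$ and $T_\alpha^2=0$ under algebraic integrability) with Lemma \ref{lemm} (the map $\alpha\mapsto T_\alpha^2$ attains its absolute maximum only at $\alpha_{\mathcal F}^\Sigma$). Your additional remark about matching the $\ell$-component parametrization is a reasonable bookkeeping check, already handled in the paper by the discussion surrounding Proposition \ref{39}.
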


\begin{proof}
Part (a) follows by Lemma \ref{lemm} and Theorem \ref{teo1}. 

To prove (b), by Lemma \ref{lemm}, $T_{\alpha}^2=0$ if and only if $\alpha=\alpha_{{\mathcal F}}^\Sigma$, and the result follows by Theorem \ref{teo1}.
    
\end{proof}

Let \fol be the extension to $S_0$ of a foliation on the complex plane and $\zf$ the surface obtained after reducing the dicritical singularities of \fol (see Subsection \ref{rfi}). For each $\mathbb{Q}$-divisor $D$ on $Z_{\mathcal{F}}$, let $\mathcal{R}(D):=\{m\in \mathbb{Z}_{>0}\mid mD \mbox{ is a divisor}\}$. Define $e(D):=0$ if $\dim |mD|<1$ for every $m\in \mathcal{R}(D)$ and, otherwise,
\begin{equation}\label{eqn_eD}
e(D):=\min \{m\in \mathcal{R}(D) \mid \dim |mD|\geq 1\},
\end{equation}
where $\dim$ stands for projective dimension.

The next result relates the divisor $\df$ introduced in (\ref{divisor}) to the divisor $\tf$ whenever \fol is algebraically integrable.

\begin{proposition}\label{conditionC}
If \fol is algebraically integrable, then $D_{\mathcal{F}}=e(T_{\mathcal{F}})T_{\mathcal{F}}$.
\end{proposition}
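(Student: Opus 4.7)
The goal reduces to showing $e(T_{\mathcal{F}})=b$, since $D_{\mathcal{F}}=b\,T_{\mathcal{F}}$ by the very definition of $T_{\mathcal{F}}$. The upper bound is essentially free: $b\,T_{\mathcal{F}}=D_{\mathcal{F}}$ is a divisor, so $b\in\mathcal{R}(T_{\mathcal{F}})$, and $\dim|b\,T_{\mathcal{F}}|=\dim|D_{\mathcal{F}}|\ge 1$ because by Lemma~\ref{clave2}(b) one has $(\pi_{\mathcal{F}})_{*}|D_{\mathcal{F}}|=\mathcal{P}_{\mathcal{F}}$, a pencil. Hence $e(T_{\mathcal{F}})\le b$.

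For the reverse inequality, I would fix any $m\in\mathcal{R}(T_{\mathcal{F}})$ with $\dim|m\,T_{\mathcal{F}}|\ge 1$ and prove $m\ge b$. Pick two general members $G_{1},G_{2}\in|m\,T_{\mathcal{F}}|$. Since $T_{\mathcal{F}}^{2}=b^{-2}D_{\mathcal{F}}^{2}=0$ by Lemma~\ref{clave2}(a), any $G\in|m\,T_{\mathcal{F}}|$ satisfies $G\cdot D_{\mathcal{F}}=mb\,T_{\mathcal{F}}^{2}=0$, so by Lemma~\ref{clave} each integral component of $G$ is $\widetilde{\mathcal{F}}$-invariant. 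Consequently $R':=G_{1}/G_{2}\in\mathbb{C}(Z_{\mathcal{F}})$ is constant on the leaves of $\widetilde{\mathcal{F}}$ and, via the birational identification $\mathbb{C}(Z_{\mathcal{F}})\cong\mathbb{C}(S_{0})$, descends to a non-constant rational first integral $R'_{0}$ of $\mathcal{F}$. By primitivity of the first integral $R$ attached to $\mathcal{P}_{\mathcal{F}}$, there exists $u\in\mathbb{C}(t)$ with $k:=\deg u\ge 1$ such that $R'_{0}=u\circ R$.

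The argument closes with a class comparison on $S_{0}$. A generic level curve of $R'_{0}=u\circ R$ is $\sum_{i=1}^{k}R^{-1}(t_{i})$, where $u^{-1}(c)=\{t_{1},\dots,t_{k}\}$ is general; each $R^{-1}(t_{i})$ is an integral member of $\mathcal{P}_{\mathcal{F}}$ of class $b[L]$ (respectively $a[F]+b[M]$ if $S_{0}=\mathbb{F}_{\delta}$), so the level curve has class $kb[L]$ (respectively $k(a[F]+b[M])$). On the other hand, this same level curve is the movable part of the push-forward $(\pi_{\mathcal{F}})_{*}G_{1}$, whose class is $(\pi_{\mathcal{F}})_{*}[m\,T_{\mathcal{F}}]=m[L]$ (respectively $(ma/b)[F]+m[M]$), since $(\pi_{\mathcal{F}})_{*}[E_{i}^{*}]=0$. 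The difference of the two classes is represented by the fixed part of the push-forward pencil, which is effective; reading off the coefficient of $[L]$ (respectively of $[M]$) yields $m-kb\ge 0$, whence $m\ge kb\ge b$. Combined with the upper bound, $e(T_{\mathcal{F}})=b$, so $D_{\mathcal{F}}=e(T_{\mathcal{F}})\,T_{\mathcal{F}}$. The main delicacy is this last step: one must verify that $(\pi_{\mathcal{F}})_{*}G_{1}$ and the generic level curve of $R'_{0}=u\circ R$ differ by an honestly effective divisor (namely the common factor of the bihomogeneous polynomials representing $R'_{0}$ in lowest terms), so that the effectiveness criterion on $\mathrm{Pic}(S_{0})$ converts into the clean numerical inequality $m\ge kb$.
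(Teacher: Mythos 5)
Your proof is correct, but its second half takes a genuinely different route from the paper's. Both arguments get $e(T_{\mathcal{F}})\le b$ immediately from $\dim|D_{\mathcal{F}}|=1$, and both use Lemma \ref{clave} together with $D_{\mathcal{F}}\cdot C=mb\,T_{\mathcal{F}}^2=0$ and nefness to see that every member of $|mT_{\mathcal{F}}|$ is $\widetilde{\mathcal{F}}$-invariant. From there the paper argues by contradiction and by counting: if $e(T_{\mathcal{F}})<b$, the infinitely many curves of the positive-dimensional system $|e(T_{\mathcal{F}})T_{\mathcal{F}}|$ would produce infinitely many reduced irreducible $\widetilde{\mathcal{F}}$-invariant curves not linearly equivalent to $D_{\mathcal{F}}$, contradicting the fact that all but finitely many such curves are strict transforms of the non-special members of $\mathcal{P}_{\mathcal{F}}$. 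You instead manufacture a second first integral $G_1/G_2$ from a sub-pencil of $|mT_{\mathcal{F}}|$, factor it through the primitive integral as $u\circ R$ using the properties recalled from \cite{bost}, and compare pushed-forward classes to get $m\ge kb\ge b$. Your route is longer and needs two extra ingredients: the primitivity statement, and the bookkeeping of the fixed part that you rightly flag — since $\dim|mT_{\mathcal{F}}|$ may exceed $1$, one should split $G_1=\Phi+G_1'$ with $\Phi$ the fixed part of the sub-pencil before identifying the generic level curve of $u\circ R$ inside $(\pi_{\mathcal{F}})_*G_1$; effectiveness of the discrepancy then follows because each integral fibre component $C_{t_i}$ occurs with multiplicity at least one in $(\pi_{\mathcal{F}})_*G_1$. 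In exchange your argument yields slightly more ($m\ge kb$, with $k$ the topological degree of $u$, i.e.\ $\#u^{-1}(c)=\max(\deg u_1,\deg u_2)$, not the difference of degrees as the paper's notation might suggest), and it sidesteps the point the paper's counting leaves implicit, namely that no integral component of a member of $|e(T_{\mathcal{F}})T_{\mathcal{F}}|$ can be linearly equivalent to $D_{\mathcal{F}}$. The only remaining care point is to note that the factorization $R_0'=u\circ R$ applies to any non-constant rational function constant on the leaves, so the indeterminacy-locus clause in the paper's definition of rational first integral causes no trouble.
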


\begin{proof}
  Let $b$ be a positive integer such that $D_{\mathcal{F}}=bT_{\mathcal{F}}$. Since $\dim |D_{\mathcal{F}}|=1$, it holds that $b\geq  e(T_{\mathcal{F}})$. 
  
To conclude the proof and reasoning by contradiction, assume that $e(T_{\mathcal{F}})<b$. Then, any curve $C\in |e(T_{\mathcal{F}})T_{\mathcal{F}}|$ satisfies $D_{\mathcal{F}}\cdot C=0$ and, therefore, it is $\widetilde{\mathcal{F}}$-invariant (by Lemma \ref{clave}). This implies that there are infinitely many $\widetilde{\mathcal{F}}$-invariant reduced and irreducible curves which are not linearly equivalent to $D_{\mathcal{F}}$, which is a contradiction.

\end{proof}

The following algorithm decides about the existence of a rational first integral of \fol (and computes it in the affirmative case). It runs under certain assumptions that depend on the vector $\alpha_{\mathcal F}^\Sigma$ introduced before Lemma \ref{lemm} and concerns the attached divisor $T_{\alpha_{\mathcal F}^\Sigma}$ and value $e(T_{\alpha_{\mathcal F}^\Sigma})$.

\begin{algorithm}\label{algoritmo2}

$ $\vspace{0.25cm}

\textbf{Input:} A differential $1$-form $\Omega$ defining a foliation \fol on $S_0$, a restricted set of independent algebraic solutions $\Sigma$, the dicritical configuration ${\mathcal B}_{{\mathcal F}}$, the vector $\alpha_{\mathcal F}^\Sigma\in\R^\ell$ and the attached $\Q$-divisor $T_{\alpha_{\mathcal F}^\Sigma}$ satisfying at least one of the following conditions:
\begin{itemize}
    
    \item[(a)] $T^2_{\alpha_{{\mathcal F}}^\Sigma}<0$.
    \item[(b)] $T^2_{\alpha_{{\mathcal F}}^\Sigma}=0$ and $\alpha_{{\mathcal F}}^\Sigma\not\in (\mathbb{Q}_{>0})^\ell$.
    \item[(c)] $T^2_{\alpha_{{\mathcal F}}^\Sigma}=0$ and $K_{Z_{{\mathcal F}}}\cdot T_{\alpha_{{\mathcal F}}^\Sigma}<0$.
    \item[(d)] $T^2_{\alpha_{{\mathcal F}}^\Sigma}=0$ and $e(T_{\alpha_{{\mathcal F}}^\Sigma})>0$.
    \item[(e)] $e(T_{\alpha_{{\mathcal F}}^\Sigma})=0$.
\end{itemize}

\textbf{Output:} A rational first integral if \fol is algebraically integrable, and $0$ otherwise. \\

\begin{enumerate} [label=(\arabic*)]

\item If Conditions (a), (b) or (e) are satisfied, then return $0$. 
\item If Condition (c) is satisfied and $-2/(K_{Z_{{\mathcal F}}}\cdot T_{\alpha_{{\mathcal F}}^\Sigma})\in \mathcal{R}(T_{\alpha_{{\mathcal F}}^\Sigma})$ --defined after Theorem \ref{teo2}-- then, let $\gamma:=-2/(K_{Z_{{\mathcal F}}}\cdot T_{\alpha_{{\mathcal F}}^\Sigma})$. Otherwise (that is, Condition (d) holds) let $\gamma:=e(T_{\alpha_{{\mathcal F}}^\Sigma})$.
\item[(3)] Compute the linear system $|\gamma T_{\alpha_{{\mathcal F}}^\Sigma}|$. If it is not base point free or its (projective) dimension is not $1$, then return $0$. Otherwise, compute the equations of two curves on $S_0$, $F=0$ and $G=0$, corresponding to a basis of $(\pi_{\mathcal{F}})_*|\gamma T_{\alpha_{{\mathcal F}}^\Sigma}|$ (recall that $\pi_{\mathcal{F}}$ was defined in Subsection \ref{rfi}) and compute $\Omega\wedge (FdG-GdF)$. If the last result is $0$, then return $F/G$; otherwise return $0$.

\end{enumerate}
\end{algorithm}

\begin{remark}
Conditions (a), (b) and (c) are easily verifiable (once the dicritical reduction of singularities has been computed). However, we do not know a general effective characterization neither for Condition (e) nor for the second part of Condition (d). 
\end{remark}

\begin{remark}
If Condition (c) holds and \fol is algebraically integrable, then the genus of a rational first integral is 0 (see the forthcoming justification).
\end{remark}

\noindent \emph{Justification of Algorithm \ref{algoritmo2}}. Step (1) is justified by Theorem \ref{teo2} and Proposition \ref{conditionC}. 

Assume that Condition (c) holds. If \fol is algebraically integrable, then, by Part (b) of Theorem \ref{teo2}, the $\mathbb{Q}$-divisor $T_{\mathcal{F}}$ must coincide with $T_{\alpha_{{\mathcal F}}^\Sigma}$. Moreover there exists a positive integer $b$ such that $D_{\mathcal{F}}=b T_{\mathcal{F}}$. By Bertini's Theorem, the general elements of $|D_{\mathcal{F}}|$ are non-singular. Therefore, by Part (a) of Lemma \ref{clave2} and the adjunction formula, $1+\frac{b}{2}K_{Z_{{\mathcal F}}}\cdot T_{\mathcal{F}} =g$, where $g$ is the genus of the rational first integral. Since $K_{Z_{{\mathcal F}}}\cdot T_{\mathcal{F}}<0$, we conclude that $g=0$ and $b=\gamma:=-2/(K_{Z_{{\mathcal F}}}\cdot T_{\mathcal{F}})$. This fact and Proposition \ref{conditionC} justify Steps (2) and (3).

\begin{flushright}
\qedsymbol{}
\end{flushright}

The next result shows that, if one knows a priori that \fol is algebraically integrable and one has a restricted set of independent algebraic solutions $\Sigma$ such that $[D_{\mathcal{F}}]$ belongs to the linear span of $V(\Sigma)$, then Algorithm \ref{algoritmo2} returns a rational first integral of \fol.

\begin{proposition}\label{hhh}
Let \fol be an algebraically integrable foliation on $S_0$. Assume that $\mathcal{F}$ admits a restricted set $\Sigma$ of independent algebraic solutions such that the class $[D_{\mathcal{F}}]$ belongs to the linear span of $V(\Sigma)$. Then, Condition (d) in Algorithm \ref{algoritmo2} holds, and therefore, this algorithm returns a rational first integral of $\mathcal{F}$.
\end{proposition}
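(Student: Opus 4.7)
The plan is to verify that Condition (d) of Algorithm \ref{algoritmo2} holds, namely $T^2_{\alpha_{\mathcal F}^\Sigma}=0$ and $e(T_{\alpha_{\mathcal F}^\Sigma})>0$, and then to check that Step (3) produces a rational first integral of \fol.

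The main point, and the one requiring real argument, is the identification $T_{\mathcal F}=T_{\alpha_{\mathcal F}^\Sigma}$. Since \fol is algebraically integrable, Theorem \ref{teo1}(a) yields a (unique) parameter vector $\alpha\in(\mathbb{Q}_{>0})^\ell$ with $T_{\mathcal F}=T_\alpha$, and by Lemma \ref{clave2}(a) one has $h(\alpha):=T_\alpha^2=0$. By Lemma \ref{lemm}, $\alpha_{\mathcal F}^\Sigma$ is the unique critical point of $h$, so it suffices to show that $\alpha$ is itself a critical point. A direct computation gives $\partial h/\partial \alpha_k = 2\,T_\alpha\cdot v_k$ with $v_k:=\partial T_\alpha/\partial \alpha_k$, so the claim reduces to $T_{\mathcal F}\cdot v_k=0$ for every $k$. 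Since every $T_\alpha$ lies in $V(\Sigma)^\perp\cap [G]_{=1}$ by Proposition \ref{39}, differences $T_{\alpha+e_k}-T_\alpha$ lie in $V(\Sigma)^\perp$, whence $v_k\in V(\Sigma)^\perp$. The hypothesis $[D_{\mathcal F}]\in\operatorname{span} V(\Sigma)$ gives $[T_{\mathcal F}]=\tfrac{1}{b}[D_{\mathcal F}]\in\operatorname{span} V(\Sigma)$, and the identity $T_{\mathcal F}\cdot v_k=0$ then follows from the very definition of $V(\Sigma)^\perp$ as the orthogonal complement of $\operatorname{span} V(\Sigma)$ with respect to the intersection form. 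Thus $\alpha=\alpha_{\mathcal F}^\Sigma$, giving $T_{\alpha_{\mathcal F}^\Sigma}=T_{\mathcal F}$ and $T^2_{\alpha_{\mathcal F}^\Sigma}=0$. For the second half of Condition (d), note that $bT_{\mathcal F}=D_{\mathcal F}$ places $b\in\mathcal R(T_{\mathcal F})$, while the morphism $\varphi:Z_{\mathcal F}\to\mathbb{P}^1$ is induced by the complete linear system $|D_{\mathcal F}|$, so $\dim|D_{\mathcal F}|\geq 1$; hence $e(T_{\alpha_{\mathcal F}^\Sigma})=e(T_{\mathcal F})$ is a positive integer bounded above by $b$.

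With Condition (d) secured, Step (2) sets $\gamma:=e(T_{\mathcal F})$, and Proposition \ref{conditionC} gives $\gamma\,T_{\alpha_{\mathcal F}^\Sigma}=D_{\mathcal F}$. Consequently $|\gamma\,T_{\alpha_{\mathcal F}^\Sigma}|=|D_{\mathcal F}|$ is base point free of projective dimension one, and by Lemma \ref{clave2}(b) its pushforward under $\pi_{\mathcal F}$ is exactly the pencil $\mathcal P_{\mathcal F}$. Any basis $\{F=0,\,G=0\}$ of that pencil defines a rational first integral $F/G$ of \fol, which automatically satisfies $\Omega\wedge(F\,dG-G\,dF)=0$, so Step (3) returns $F/G$. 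The only subtle step is the orthogonality argument above, which exploits the perfect pairing between $\operatorname{span} V(\Sigma)$ and $V(\Sigma)^\perp$; everything else is routine bookkeeping from the results already established earlier in the paper.
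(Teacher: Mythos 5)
Your proof is correct, and it reaches the central fact $T^2_{\alpha_{\mathcal F}^\Sigma}=0$ by a genuinely different route from the paper. The paper argues globally: by Proposition \ref{39} and Lemma \ref{lemm}, $T^2_{\alpha_{\mathcal F}^\Sigma}$ is the maximum of $\{\mathbf{x}^2 \mid \mathbf{x}\in V(\Sigma)^\perp\cap[G]_{=1}\}$; the hypothesis gives $V(\Sigma)^\perp\subseteq[D_{\mathcal F}]^\perp$, every class orthogonal to the nef divisor $D_{\mathcal F}$ (which satisfies $D_{\mathcal F}^2=0$) has non-positive self-intersection --- a Hodge-index/signature fact --- and $[T_{\mathcal F}]$ realizes the value $0$, so the maximum is $0$. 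You instead show that the parameter $\alpha$ with $T_{\mathcal F}=T_\alpha$ is a critical point of the strictly concave quadratic $h(\alpha)=T_\alpha^2$, using only that the directions $v_k=\partial T_\alpha/\partial\alpha_k$ lie in $V(\Sigma)^\perp$ while $[T_{\mathcal F}]=\frac{1}{b}[D_{\mathcal F}]$ lies in the linear span of $V(\Sigma)$; uniqueness of the critical point (Lemma \ref{lemm}) then forces $\alpha=\alpha_{\mathcal F}^\Sigma$. Your argument is purely linear-algebraic, avoids the signature argument about nef classes, and yields $T_{\mathcal F}=T_{\alpha_{\mathcal F}^\Sigma}$ directly rather than through Theorem \ref{teo2}(b); the paper's version is shorter on the page because it delegates the geometry to nefness. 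Your treatment of $e(T_{\alpha_{\mathcal F}^\Sigma})>0$ and of Step (3) agrees with the paper's justification of Algorithm \ref{algoritmo2}. One cosmetic caveat: the phrase ``perfect pairing'' claims more than you use --- all that is needed is that $V(\Sigma)^\perp$ annihilates the linear span of $V(\Sigma)$, which is immediate from its definition (the intersection form is far from definite here, so no duality statement should be invoked).
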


\begin{proof}
By Proposition \ref{39} and Lemma \ref{lemm} (with the notation as in these results) the self-intersection $T_{\alpha_{{\mathcal F}}^\Sigma}^2$ is the maximum of the set $R:=\{\mathbf{x}^2\mid \mathbf{x}\in V(\Sigma)^\perp\cap  [G]_{=1}\}$, where $G=L^*$ (respectively, $F^*$) when $S_0=\PP^2$ (respectively, $\fd$). 

Now, $V(\Sigma)^\perp\subseteq [D_{\mathcal{F}}]^\perp$ since $[D_{\mathcal{F}}]$ belongs to the linear span of $V(\Sigma)$.
Moreover, any element of the hyperplane $[D_{\mathcal{F}}]^\perp$ has non-positive self-intersection (because $D_{\mathcal{F}}$ is a nef divisor). Finally, $[T_{\mathcal{F}}]\in V(\Sigma)^\perp\cap [G]_{=1}$ and $T_{\mathcal{F}}^2=0$ (by Lemma \ref{clave2}). As a consequence, $0$ belongs to $R$ and therefore $T_{\alpha_{{\mathcal F}}^\Sigma}^2=0$. This equality and the fact that, by Proposition \ref{conditionC}, $e(T_{\alpha_{{\mathcal F}}^\Sigma})>0$, conclude the proof.

\end{proof}

The following result shows that the computation of the integral components of a fiber of the pencil ${\mathcal P}_{\mathcal{F}}$ introduced in Subsection \ref{rfi} leads us to obtain a rational first integral.

\begin{corollary}\label{remmm}
Assume that $\mathcal{F}$ is algebraically integrable and let $\Sigma'$ be a finite set of integral $\mathcal{F}$-invariant curves containing all the integral components of a curve of the pencil ${\mathcal P}_{\mathcal{F}}$. Then
\begin{itemize}
\item[(a)] If $\Sigma\subseteq \Sigma'$ is such that $V(\Sigma)$ is a basis of the linear span of $V(\Sigma')$, then $\Sigma$ is a restricted set of independent algebraic solutions (of $\mathcal{F}$) and $[D_{\mathcal{F}}]$ belongs to the linear span of $V(\Sigma)$.
\item[(b)] For any subset $\Sigma\subseteq \Sigma'$ satisfying the condition given in (a), Algorithm \ref{algoritmo2} (applied to $\mathcal{F}$, the dicritical configuration $\mathcal{B}_{\mathcal{F}}$ and $\Sigma$) returns a rational first integral of $\mathcal{F}$.
\end{itemize}

\end{corollary}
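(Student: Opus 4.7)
My plan is to reduce everything to Proposition \ref{hhh}, so the work concentrates on establishing part (a). For (a), the first claim, that $\Sigma$ is a restricted set of independent algebraic solutions, is essentially free: by hypothesis the vectors in $V(\Sigma)$ are $\R$-linearly independent (they form a basis of $\operatorname{span}_{\R} V(\Sigma')$), so $\Sigma$ is a set of independent algebraic solutions in the sense of Definition \ref{indep}; then, since $\mathcal{F}$ is algebraically integrable, part (d) of Lemma \ref{clave3} already guarantees that $\Sigma$ is restricted.

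The substantive point in (a) is that $[D_{\mathcal{F}}]$ lies in $\operatorname{span}_{\R} V(\Sigma)=\operatorname{span}_{\R} V(\Sigma')$. To see this, pick the curve $C\in \mathcal{P}_{\mathcal{F}}$ all of whose integral components lie in $\Sigma'$; write $C=\sum_j n_j C_j$ with $C_j\in\Sigma'$. As recalled in the proof of Lemma \ref{clave2} (and in the proof of Beauville's Theorem II.7 cited in Subsection \ref{rfi}), there exists an effective divisor $E$ with exceptional support such that $\widetilde{C}+E\in |D_{\mathcal{F}}|$, i.e.\ $\widetilde{C}+E$ is the fiber of the morphism $\varphi:Z_{\mathcal{F}}\to \mathbb{P}^1$ over the corresponding point of $\mathbb{P}^1$. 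Every integral component of this fiber is contracted by $\varphi$, hence is $\widetilde{\mathcal{F}}$-invariant, so by Lemma \ref{clave} each integral component of $E$ is the strict transform of a non-dicritical exceptional divisor. Writing $E=\sum_i a_i \widetilde{E}_i$ with the $E_i$ non-dicritical, we obtain
\[
[D_{\mathcal{F}}]=\sum_j n_j [\widetilde{C}_j]+\sum_i a_i [\widetilde{E}_i],
\]
and every term on the right lies in $V(\Sigma')$. Hence $[D_{\mathcal{F}}]\in \operatorname{span}_{\R} V(\Sigma')=\operatorname{span}_{\R} V(\Sigma)$, proving (a).

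For part (b), once (a) is established the hypotheses of Proposition \ref{hhh} are met: $\Sigma$ is a restricted set of independent algebraic solutions of the algebraically integrable foliation $\mathcal{F}$, and $[D_{\mathcal{F}}]$ belongs to the linear span of $V(\Sigma)$. Proposition \ref{hhh} then yields that Condition (d) of Algorithm \ref{algoritmo2} is satisfied, and consequently Algorithm \ref{algoritmo2} applied to $\mathcal{F}$, $\mathcal{B}_{\mathcal{F}}$ and $\Sigma$ outputs a rational first integral of $\mathcal{F}$. The only place where one has to be a little careful is the identification of $E$ as supported on non-dicritical divisors; this is the conceptual core of the argument and everything else is bookkeeping.
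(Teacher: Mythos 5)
Your proposal is correct and follows essentially the same route as the paper: identify the distinguished curve of $\mathcal{P}_{\mathcal{F}}$ with a fiber of $\varphi$, decompose that fiber into strict transforms of curves in $\Sigma'$ plus strict transforms of non-dicritical exceptional divisors (all of whose classes lie in $V(\Sigma')$), conclude $[D_{\mathcal{F}}]\in\operatorname{span}_{\R}V(\Sigma)$, and then invoke Proposition \ref{hhh} for part (b). Your explicit appeal to Lemma \ref{clave} to justify that the exceptional part of the fiber is supported on non-dicritical divisors, and to Lemma \ref{clave3}(d) for the "restricted" claim, only makes explicit what the paper leaves implicit.
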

\begin{proof}
Let $n=\#\mathcal{B}_{\mathcal{F}}$. Firstly we prove Part (a). Clearly $\Sigma$ is a restricted set of independent algebraic solutions. The curve of the pencil ${\mathcal P}_{\mathcal{F}}$ whose integral components are in $\Sigma'$ corresponds to a fiber $G$ of the morphism $\varphi: Z_\mathcal{F}\rightarrow \mathbb{P}^1$ induced by the complete linear system $|D_{\mathcal{F}}|$ and, then, $G$ has the form \[
\sum_{C\in \Sigma'}a_C C+\sum_i b_i \widetilde{E}_i,\]
where the indices $i$ of the second summand run over those indices in the set $\{1,\ldots,n\}$ such that the exceptional divisor $E_i$ is non-dicritical. Moreover, $a_C$ and $b_i$ are not negative. Since $D_{\mathcal{F}}$ and $G$ are linearly equivalent, it holds that $[D_{\mathcal{F}}]$ belongs to the linear span of $V(\Sigma)$.

Part (b) follows from Part (a) and Proposition \ref{hhh}.

\end{proof}

We are ready to state our \emph{main algorithm}. It decides whether a foliation \fol on $S_0$ (recall that it is  either the projective plane or a Hirzebruch surface) has a rational first integral of genus $g\neq 1$ and compute it in the affirmative case. We warn that under a specific condition we will state, the algorithm may not finish, giving an ambiguous output. Later we will show that a slight modification of our algorithm always decide if there is a polynomial first integral of genus $g\neq 1$ of a polynomial foliation of the complex plane.

\begin{algorithm}\label{algoritmo3}

$ $\vspace{0.25cm}

\textbf{Input:} A differential $1$-form $\Omega$ defining a foliation ${\mathcal F}$ on $S_0$, a restricted set of independent algebraic solutions $\Sigma$, the dicritical configuration ${\mathcal B}_{{\mathcal F}}$, the $\Q$-divisor $T_{\alpha_{{\mathcal F}}^\Sigma}$ (see the definition below Proposition \ref{39} and Lemma \ref{lemm}) and a non-negative integer $g\neq 1$.\\

\textbf{Output:} Either a rational first integral of genus $g$ of ${\mathcal F}$, or $0$ (what means that $\mathcal{F}$ has no rational first integral of genus $g$), or $-1$ (what means that we are not able to decide about the algebraic integrability of \fol).\\

\begin{enumerate} [label=(\arabic*)]
\item If $T_{\alpha_{{\mathcal F}}^\Sigma}^2<0$, then return $0$. 
\item If $T_{\alpha_{{\mathcal F}}^\Sigma}^2=0$ and $K_{Z_{\mathcal{F}}}\cdot T_{\alpha_{{\mathcal F}}^\Sigma}=0$, then return $0$. This condition is equivalent to the fact that, in the case of algebraic integrability of \fol, the genus of a \ipr of \fol is $1$.
\item If $T_{\alpha_{{\mathcal F}}^\Sigma}^2=0$ and $K_{Z_{\mathcal{F}}}\cdot T_{\alpha_{{\mathcal F}}^\Sigma}\neq 0$, then compute $\gamma:=2(g-1)/(K_{Z_{{\mathcal F}}}\cdot T_{\alpha_{{\mathcal F}}^\Sigma})$ and perform the following steps:
\begin{enumerate}[label=(\arabic{enumi}.\arabic{enumii})]
    \item If $\gamma\not\in \mathcal{R}(T_{\alpha_{{\mathcal F}}^\Sigma})$, then return $0$.
    \item If the linear system $|\gamma T_{\alpha_{{\mathcal F}}^\Sigma}|$ has (projective) dimension different from $1$ or it is not base point free, then return $0$. Otherwise, compute the equations of two curves on $S_0$, $F=0$ and $G=0$, corresponding to a basis of $(\pi_{\mathcal{F}})_*|\gamma T_{\alpha_{{\mathcal F}}^\Sigma}|$ ($\pi_{\mathcal{F}}$ was defined in (\ref{www})), and compute $\Omega\wedge (FdG-GdF)$. If the last result is $0$, then return $F/G$; otherwise return $0$.
\end{enumerate}
\item If $T_{\alpha_{{\mathcal F}}^\Sigma}^2>0$, then consider the set of $\ell$-tuples 
\begin{equation}\label{eqn_Delta}
\mathfrak{D}=\{\alpha\in (\mathbb{Q}_{\geq 0})^\ell\mid T_{\alpha}^2=0\}
\end{equation}
 and compute the real values $p_{\inf}:=\inf \{K_{\mathcal{F}}\cdot T_{\alpha}\mid \alpha\in \mathfrak{D}\}$ and $p_{\sup}:=\sup \{K_{\mathcal{F}}\cdot T_{\alpha}\mid \alpha\in \mathfrak{D}\}$. If $p_{\inf}\cdot p_{\sup}\leq 0$, then return $-1$. Otherwise:
\begin{enumerate}[label=(\arabic{enumi}.\arabic{enumii})]
\item If ($p_{\sup}<0$ and $g\neq 0$) or ($p_{\inf}>0$ and $g=0$), then return $0$.
\item If $p_{\sup}<0$, then consider the set of integers $V:=[\frac{-2}{p_{\inf}},\frac{-2}{p_{\sup}}]\cap \mathbb{Z}$. 
\item If $p_{\inf}>0$, then consider the set of integers $V:=[\frac{2(g-1)}{p_{\sup}},\frac{2(g-1)}{p_{\inf}}]\cap \mathbb{Z}$.
\item If $V=\emptyset$, then return $0$. Otherwise, let ${\rm pr}_k: (\mathbb{Q}_{\geq 0})^\ell \rightarrow \mathbb{Q}_{\geq 0}$ be the projection map onto the $k$th coordinate, $1\leq k\leq l$, and compute two non-negative rational numbers $\alpha_k^{-}$ and $\alpha_k^{+}$ such that\[
{\rm pr}_k(\mathfrak{D})\subseteq [\alpha_k^{-},\alpha_k^{+}].\]
 Also, for each $t\in V$, consider the finite set 
$$A_t:=\bigcap_{k=1}^\ell {\rm pr}_k^{-1}([t\alpha_k^{-},t\alpha_k^{+}]\cap \mathbb{Z}).$$

\item For each $t\in V$ and for each $s\in A_t$, check whether:
\begin{enumerate}[label=(\arabic{enumi}.\arabic{enumii}.\arabic{enumiii})]
\item $tT_{s/t}$ is a divisor on $Z_{\mathcal F}$, $T_{s/t}^2=0$ and $|tT_{s/t}|$ is a base point free linear system of (projective) dimension $1$. In the affirmative case, compute the equations $F=0$ and $G=0$ of two curves on $S_0$ corresponding to a basis of $(\pi_{\mathcal{F}})_*|tT_{s/t}|$ and verify whether $\Omega\wedge (FdG-GdF)$ vanishes. In the positive case, return $F/G$.
 \end{enumerate}
 \item Return $0$.
\end{enumerate}
\end{enumerate}
\end{algorithm}

\noindent \emph{Justification of Algorithm \ref{algoritmo3}}. Step (1) is justified by Part (a) of Theorem \ref{teo2} while Step (2) is justified by Part (b) of the same theorem and the adjunction formula (since $g\neq 1$). Moreover, Part (b) of Theorem \ref{teo2}, Proposition \ref{conditionC} and Lemma \ref{clave2} justify Step (3).

In order to justify Step (4), assume that $T_{\alpha_{{\mathcal F}}^\Sigma}^2>0$. Notice that the set $\mathfrak{D}$ is non-empty and bounded and, therefore, $p_{inf}$ and $p_{sup}$ are well-defined and can be computed. For instance, by using Lagrange multipliers to obtain the extrema of the map $f:\mathbb{R}^\ell\rightarrow \mathbb{R}$ defined by $f(\alpha_1,\ldots,\alpha_\ell):=K_{\mathcal{F}}\cdot T_{\alpha^2}$ subject to the restriction $T_{\alpha^2}^2=0$, where $\alpha:=(\alpha_1,\ldots,\alpha_\ell)$ and $\alpha^2:=(\alpha_1^2,\ldots,\alpha_\ell^2)$. To adapt $f$ to our specific requirements, we have introduced a slight modification by using $\alpha^2$ instead of $\alpha$, as would be natural. This adaptation is motivated by our interest in obtaining solutions within the domain of positive numbers. While Lagrange multipliers are conventionally used for real-valued functions, working with $\alpha^2:=(\alpha_1^2,\ldots,\alpha_\ell^2)$ allows us to ensure that the restriction remains applicable to positive values.

This approach leads us to an efficient computation of a sufficiently accurate approximation of the values $p_{\inf}$ and $p_{\sup}$. Notice that, for our purposes, it suffices good approximations of $p_{\inf}$ and $p_{\sup}$.

Assume now that $\mathcal{F}$ has a rational first integral of genus $g$. Let $\beta\in (\mathbb{Q}_{\geq 0})^\ell$ such that $T_{\mathcal{F}}=T_{\beta}$ and let $e(T_{\beta})$ be the integer defined in (\ref{eqn_eD}). Notice that $e(T_{\beta})$ is equal to the coefficient $b$ in (\ref{divisor}) and it satisfies $[D_{\mathcal{F}}]=[b T_{\beta}]$). Since $\beta\in\mathfrak{D}$,
\begin{equation}\label{eqn_pi_ps}
b\,p_{\inf}\leq 2g-2\leq b\,p_{\sup}
\end{equation}
 by the adjunction formula. This proves that if $p_{\sup}<0$ (respectively, $p_{\inf}>0$), $g=0$ (respectively, $g\neq 0$) which justifies Step (4.1).

If $p_{\sup}<0$ and $g=0$ (respectively, $p_{\inf}>0$ and $g\neq 0$), Inequalities (\ref{eqn_pi_ps}) show that $b=t$ for some $t\in V$. Moreover, $b\beta$ belongs to the set $A_b$ defined in the algorithm; therefore, the divisor $D_{\mathcal{F}}$ (introduced in (\ref{divisor})) equals $b T_{s/b}$ for some $s\in A_b$. These facts and Lemma \ref{clave2} show that Step (4) works. It is convenient to add that the bounds $\alpha_k^-$ and $\alpha_k^+$ ($k=1,\ldots,\ell$) can be computed with the help of similar procedures to those used to compute $p_{\sup}$ and $p_{\inf}$.
\begin{flushright}
\qedsymbol{}
\end{flushright}

\begin{remark}
With the above notation, assume that $p_{\inf}\cdot p_{\sup}\leq 0$. Then, the inequalities in (\ref{eqn_pi_ps}) provide two lower bounds for $b$; let us denote by $b_{\max}$ the largest one. Then, it means that $b\in [b_{\max},\infty)\cap \mathbb{Z}$, which is not a finite set. This is the reason why the algorithm returns $-1$ in this case.\end{remark}

One can avoid the output $-1$ in Algorithm \ref{algoritmo3} when looking for specific types of rational first integrals. We explain these cases in the following two results.
\begin{remark}\label{rr1}
Fix $r\geq 1$ irreducible polynomials $f_1,\ldots,f_r$ in $\mathbb{C}[x,y]$. A simplified form of Algorithm \ref{algoritmo3} allows us to decide whether a polynomial foliation \folc on $\mathbb{C}^2$ has, or not, a rational first integral of genus $g\neq 1$ of the form 
\begin{equation}\label{type}
\frac{f(x,y)}{{f_1(x,y)}^{a_1}\cdots {f_r(x,y)}^{a_r}},
\end{equation}
where $f\in \mathbb{C}[x,y]$ and $a_s$, $1\leq s\leq r$, are positive integers.

Indeed, let $C_s$ be the closure of the image of the affine curve with equation $f_s(x,y)=0$ by the inclusion $U_Z\rightarrow \PP^2$ (respectively, $U_{00}\rightarrow \mathbb{F}_{\delta}$), after identifying the affine plane with $U_Z$ (respectively, $U_{00}$). Consider a maximal (with respect to the inclusion) restricted set $\Sigma$ of independent algebraic solutions of the extension \fol of \folc to $\PP^2$ (respectively, $\fd$) contained in $\{C_1,\ldots,C_r\}$ (respectively, $\{C_1,\ldots,C_r,C_{X_0},C_{Y_0}\}$).

In Algorithm \ref{algoritmo3}, replace Step (4) by\medskip
\begin{center}
(4') If $T_{\alpha_{{\mathcal F}}^\Sigma}^2>0$, then return $0$.\medskip 
\end{center}
This modified algorithm, where the input is the same as that in Algorithm \ref{algoritmo3} but $\Sigma$ must be as above, gives an output which is either a rational first integral of \fol of genus $g$, or $0$ (that means that \folc has no rational first integral of genus g and type (\ref{type})).

The algorithm runs because if \folc has a rational first integral of type (\ref{type}), then, by Part (a) of Corollary \ref{remmm}, the class $[D_{\mathcal{F}}]$ belongs to the linear span of $V(\Sigma)$ and hence $T_{\alpha^\Sigma_\mathcal{F}}^2=0$ by Proposition \ref{hhh}.

\end{remark}

We conclude this section with an interesting corollary whose proof follows from Remark \ref{rr1}.

\begin{corollary}\label{rr2}
Let \folc be a polynomial foliation on $\C^2$. Then, the algorithm described in Remark \ref{rr1}, with $r=1$ and $f_1(x,y)=1$, decides whether or not \folc has a polynomial first integral of fixed genus $g\neq 1$. In the affirmative case, it computes the first integral.

\end{corollary}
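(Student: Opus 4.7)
My plan is to deduce Corollary \ref{rr2} as a direct specialization of Remark \ref{rr1}, identifying polynomial first integrals with a degenerate case of rational first integrals of the form (\ref{type}). The key tautological observation is that a polynomial $f\in\C[x,y]$ is a first integral of \folc if and only if the rational function $f/1 = f/f_1^{a_1}$ (with $r=1$, $f_1(x,y)=1$ and any $a_1\in\Z_{>0}$) is a rational first integral of \folc of the form (\ref{type}). Consequently, existence of a polynomial first integral of \folc of genus $g\neq 1$ is equivalent to existence of a rational first integral of type (\ref{type}) of that genus with $r=1$ and $f_1=1$.

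Next, I would verify that the algorithm of Remark \ref{rr1} remains well-posed under this specialization. Since $f_1=1$, the closure $C_1$ of $\{f_1=0\}$ in $S_0$ is empty, so the set in which a maximal restricted $\Sigma$ of independent algebraic solutions is to be chosen reduces to $\emptyset$ (on $S_0=\PP^2$) or to $\{C_{X_0},C_{Y_0}\}$ (on $S_0=\F\delta$). By Lemma \ref{clave3}(c) the empty set is always a restricted set, so a maximal restricted $\Sigma$ exists in either case, and all remaining inputs of Algorithm \ref{algoritmo3} (the $1$-form $\Omega$, the dicritical configuration $\mathcal B_{\mathcal F}$, the $\Q$-divisor $T_{\alpha_{\mathcal F}^\Sigma}$ and the genus $g$) are available.

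The heart of the argument then reuses Remark \ref{rr1}'s justification verbatim: under the existence of a rational first integral of type (\ref{type}), the pencil $\mathcal P_{\mathcal F}$ admits a special fibre whose integral components lie in the candidate set (when $f_1=1$ this is the fibre at infinity of the level-set pencil, supported on the line $L_\infty$ on $\PP^2$ and on $C_{X_0}\cup C_{Y_0}$ on $\F\delta$); Corollary \ref{remmm}(a) then places $[D_{\mathcal F}]$ in the linear span of $V(\Sigma)$, and Proposition \ref{hhh} forces $T^2_{\alpha_{\mathcal F}^\Sigma}=0$. Hence Step (4) of Algorithm \ref{algoritmo3} is never reached, the replacement of Step (4) by Step (4') is vacuously correct, and Steps (1)--(3) either produce the desired polynomial first integral or correctly return $0$.

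The main foreseeable obstacle is verifying the structural claim just invoked: that for a polynomial first integral the pencil on the compactification $S_0$ has a distinguished fibre supported on the boundary $S_0\setminus\C^2$, with integral components in the allowed set. On $\F\delta$ this follows from Algorithm \ref{algoritmo}'s explicit bihomogenization, which sends the affine pencil $\{f=c\}_{c\in\C}$ to a bihomogeneous pencil whose fibre over $c=\infty$ is supported on $\{X_0Y_0=0\}$, combined with Proposition \ref{prop_X0}(1) to ensure that $C_{X_0}$ is $\mathcal F^\delta$-invariant for a suitable choice of $\delta$. On $\PP^2$ the verification additionally requires that $[\widetilde{L_\infty}]$ lie in the linear span of $V(\emptyset)$, which can be read off from Remark \ref{K-K} together with the explicit expression of $K_{\widetilde{\mathcal F}}-K_{Z_{\mathcal F}}$ recorded at the start of Subsection \ref{subsec_alg_int}. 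Once this structural claim is in place, Corollary \ref{rr2} follows at once from Remark \ref{rr1}.
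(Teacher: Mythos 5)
Your proposal is correct and follows essentially the same route as the paper: the paper's entire proof of Corollary \ref{rr2} is the one-line observation that it is the specialization of Remark \ref{rr1} to $r=1$, $f_1=1$, and that remark is in turn justified exactly as you describe, via Corollary \ref{remmm}(a) and Proposition \ref{hhh} applied to the fibre of $\mathcal{P}_{\mathcal F}$ supported on the boundary $S_0\setminus\C^2$. You in fact go further than the paper by making explicit the structural verification it leaves implicit (that this boundary fibre's integral components lie in the allowed candidate set --- automatic on $\F{\delta}$, where $C_{X_0},C_{Y_0}$ are admitted, and requiring the extra check on $[\widetilde{L_\infty}]$ on $\PP^2$), which is a worthwhile clarification rather than a deviation.
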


\section{Examples}\label{sec_examples}

To finish this paper we supply some examples which explain how our algorithms work. We hope this also helps the reader to understand our results and algorithms.

Let us denote by $C_{f}$ the curve on $S_0$, which can be either the projective plane $\PP^2$ or a Hirzebruch surface $\mathbb{F}_{\delta}$ with equation $f=0$.

\begin{example}\label{ex_noip}
Let \folc be the polynomial foliation on $\mathbb{C}^2$ defined by the following $1$-form:
 \begin{multline*}
\omega:=(-8 y + 9 x^2 y + 3 y^3 - 3 x^2 y^3)dx+(8 x - 3 x^3 - 9 x y^2 + 3 x^3 y^2 - 2 y^3)dy.\end{multline*}
 Consider its extended foliation $\mathcal{F}^1$ to $\mathbb{F}_1$ given by the output of Algorithm \ref{algoritmo} with inputs $\omega$ and $\delta=1$. Its canonical sheaf is $\mathcal{K}_{\mathcal{F}^1}=\mathcal{O}_{\mathbb{F}_1}(2,3)$ and its dicritical configuration ${\mathcal B}_{{\mathcal F}^1}$ consists of 5 points, $p_1,\ldots,p_5$, such that $p_1,p_3,p_4,p_5\in \mathbb{F}_1$ and $p_2$ is infinitely near $p_1$. Moreover, $p_2,\,p_3,\,p_4$ and $p_5$ are the infinitely near terminal dicritical singularities, thus $d=4$. $\Sigma'=\{C_{X_0},C_{Y_0},C_{Y_1}\}$ is a set of $\mathcal{F}^1$-invariant curves. From the proximity relations among the points of ${\mathcal B}_{{\mathcal F}^1}$ and the equalities
$$[K_{\widetilde{\mathcal F}^1}-K_{Z_{{\mathcal F}^1}}]=3[F^*] +5[M^*] -4[E_1^*] -3[E_2^*] -2[E_3^*] -2[E_4^*] -2[E_5^*],$$
 $$[\widetilde{C}_{X_0}]=[F^*] -[E_1^*] \mbox{, } 
[\widetilde{C}_{Y_0}]=[M^*] -\sum_{i=1}^2[E_i^*] \mbox{ and } 
[\widetilde{C}_{Y_1}]=-[F^*]+[M^*] -\sum_{i=3}^5[E_i^*],$$
we can deduce that $\Sigma=\{C_{X_0},C_{Y_0}\}\subset \Sigma'$ is a restricted set of independent algebraic solutions of $\mathcal{F}^1$ of length $\sigma=2$ and $\Sigma'$ is not. Then, $\ell=d-\sigma=2$.

Considering parameters $\alpha_1,\alpha_2,\alpha_3$ and $\alpha_4$ associated, respectively, to the infinitely near terminal dicritical singularities $p_3, p_4, p_5$ and $p_2$ and expressing $\alpha_3$ and $\alpha_4$ in terms of of $\alpha_1$ and $\alpha_2$ (as explained before Proposition \ref{39}), one gets that $\alpha_{{\mathcal F}^1}^\Sigma=(1,1)$ and
$$T_{\alpha_{{\mathcal F}^1}^\Sigma}=[F^*] +[M^*] -[E_1^*] -[E_2^*] - [E_3^*] - [E_4^*] - [E_5^*].$$

Since $T^2_{\alpha^\Sigma_{\mathcal{F}^1}}=-2<0$, by Algorithm \ref{algoritmo2} we conclude that ${\mathcal F}^1$, and hence \fol, is not algebraically integrable.
\end{example}

\begin{example}\label{ex_ip1}
Consider now the polynomial foliation \folc on $\C^2$ defined by the $1$-form \begin{multline*}
\omega:=(x^4 - x^3 y + x^4 y^3 + 5 x^3 y^4 + 9 x^2 y^5 + 7 x y^6 + 2 y^7)dx+\\
(2 x^4 - 3 x^5 y^2 - 13 x^4 y^3 - 21 x^3 y^4 - 15 x^2 y^5 - 4 x y^6)dy.\end{multline*}
Take the extended foliation $\mathcal{F}^2$ of \folc to $\mathbb{F}_2$ given by the output of Algorithm \ref{algoritmo}, where the input is $(2,\omega)$. 

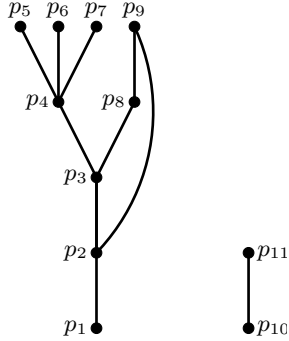
\begin{figure}[H]

\begin{center}
\definecolor{ccqqqq}{rgb}{0.8,0,0} 
\definecolor{xdxdff}{rgb}{0.49019607843137253,0.49019607843137253,1} 
\definecolor{ududff}{rgb}{0.30196078431372547,0.30196078431372547,1} 
\begin{tikzpicture}[line cap=round,line join=round,>=triangle 45,x=1cm,y=1cm]
\clip(1.5,-1.5) rectangle (5.5,3.5);
\draw [line width=1pt] (3,-1)-- (3,1);
\draw [line width=1pt] (3,0)-- (3,1);
\draw [line width=1pt] (3,1)-- (2.5,2);
\draw [line width=1pt] (3,1)-- (3.5,2);
\draw [line width=1pt] (2.5,2)-- (2,3);
\draw [line width=1pt] (2.5,2)-- (2.5,3);
\draw [line width=1pt] (2.5,2)-- (3,3);
\draw [line width=1pt] (3.5,3)-- (3.5,2);
\draw [shift={(1.031621621621621,1.8697297297297297)},line width=1pt]  plot[domain=-0.7597014056253908:0.4294040507961376,variable=\t]({1*2.7148485598100502*cos(\t r)+0*2.7148485598100502*sin(\t r)},{0*2.7148485598100502*cos(\t r)+1*2.7148485598100502*sin(\t r)});
\draw [line width=1pt] (5,-1)-- (5,0);
\begin{scriptsize}
\draw [fill] (3,-1) circle (2pt) node[anchor=east]{$p_1$};
\draw [fill] (3,0) circle (2pt) node[anchor=east]{$p_2$};
\draw [fill] (3,1) circle (2pt) node[anchor=east]{$p_3$};
\draw [fill] (3.5,2) circle (2pt) node[anchor=east]{$p_8$};
\draw [fill] (2.5,2) circle (2pt) node[anchor=east]{$p_4$};
\draw [fill] (2,3) circle (2pt) node[anchor=south]{$p_5$};
\draw [fill] (2.5,3) circle (2pt) node[anchor=south]{$p_6$};
\draw [fill] (3,3) circle (2pt) node[anchor=south]{$p_7$};
\draw [fill] (3.5,3) circle (2pt) node[anchor=south]{$p_9$};
\draw [fill] (5,-1) circle (2pt) node[anchor=west]{$p_{10}$};
\draw [fill] (5,0) circle (2pt) node[anchor=west]{$p_{11}$};
\end{scriptsize}
\end{tikzpicture}
  \caption{Proximity graph of ${\mathcal B}_{\mathcal{F}^{2}}$}
  \label{figex1}
\end{center}
\end{figure}

 The dicritical configuration $\mathcal{B}_{\mathcal F^2}$ of the foliation $\mathcal{F}^2$ (whose proximity graph is depicted in Figure \ref{figex1}) has
$11$ points: $p_1,\ldots,p_{11}$; five of them ($p_5,\,p_6,\,p_7,\,p_9$ and $p_{11}$) are the infinitely near terminal dicritical singularities. The canonical sheaf of $\mathcal{F}^2$ is $\mathcal{K}_{\mathcal{F}^2}=\mathcal{O}_{\mathbb{F}_{2}}(6,6)$. From the proximity relations among the points of ${\mathcal B}_{{\mathcal F}^2}$ and the equalities
\begin{multline*}
[K_{\widetilde{\mathcal F}^2}-K_{Z_{{\mathcal F}^2}}]=6[F^*] +8[M^*] -8[E_1^*] -8[E_2^*]  -5[E_3^*] -4[E_4^*] -2\sum_{i=5}^7[E_i^*]\\
-[E_8^*]-2[E_9^*] -4[E_{10}^*] -5[E_{11}^*],
\end{multline*}
$$[C_{X_0}]=[F^*] -[E_1^*], [C_{X_1}]=[F^*] -[E_{10}^*]-[E_{11}^*] \mbox{ and } 
[C_{Y_0}]=[M^*] -[E_1^*] -[E_2^*] -[E_3^*],$$
\noindent it can be checked that $\Sigma=\{C_{X_0},C_{X_1},C_{Y_0}\}$ is a restricted set of independent algebraic solutions of $\mathcal{F}^2$. 

Considering parameters $\alpha_1,\alpha_2,\alpha_3, \alpha_4$ and $\alpha_5$ associated, respectively, to the infinitely near terminal dicritical singularities $p_5, p_6, p_7, p_9$ and $p_{11}$, and expressing $\alpha_3,\alpha_4$ and $\alpha_5$ in terms of of $\alpha_1$ and $\alpha_2$ (as explained before Proposition \ref{39}), we deduce (keeping the notation of Proposition \ref{39}) that the divisors $T_\alpha$ are of the form
\begin{multline*}
T_{\alpha}=\frac{2}{3}F^*+M^* -E_1^* -E_2^*-\frac{2}{3}E_3^*-\frac{1}{2}E_4^* -\alpha_1E_5^* -\alpha_2E_6^* -\left(\frac{1}{2}-\alpha_1-\alpha_2\right)E_7^* \\
-\frac{1}{6}E_8^*-\frac{1}{6}E_9^* -\frac{1}{2}E_{10}^* -\frac{1}{2}E_{11}^*,
\end{multline*}
\noindent and $\alpha^\Sigma_{\mathcal{F}^2}=(\frac{1}{6},\frac{1}{6})$. Then, $T^2_{\alpha^\Sigma_{\mathcal{F}^2}}=0$ and $K_{Z_{\mathcal{F}^2}}\cdot T_{\alpha^\Sigma_{\mathcal{F}^2}}=-\frac{1}{3}$, which means that we are under Condition (c) of Algorithm \ref{algoritmo2}. Applying this algorithm, $\gamma=6$ in Step (2) and \[
6 T_{\alpha^\Sigma_{\mathcal{F}^2}}=4F^*+6M^*-6E_1^*-6E_2^*-4E_3^*-3E_4^*-\sum_{i=5}^9E_i^*-3E_{10}^*-3E_{11}^*.\]
The algorithm returns a \ipr of $\mathcal{F}^2$ (of genus $0$):\[
\frac{ X_1^4 Y_0^6 + 2 X_0^3 X_1^3 Y_0^5 Y_1 + X_0^6 X_1^2 Y_0^4 Y_1^2}{X_0 X_1^3 Y_0^6 + X_0^7 X_1^3 Y_0^3 Y_1^3 + 3 X_0^{10} X_1^2 Y_0^2 Y_1^4 + 3 X_0^{13} X_1 Y_0 Y_1^5 + X_0^{16} Y_1^6},\]
which provides a rational first integral of \folc, \[
 \frac{f}{g}=\frac{ x^4+2x^3y+x^2y^2}{x^3+x^3y^3+3x^2y^4+3xy^5+y^6}.\]
\end{example}

\begin{example}\label{ex_ip2}
In this new example, \folc denotes the polynomial foliation on $\C^2$ defined by the $1$-form \[
\omega:=(-4 x^5 y - y^6 - 5 x^4 y^6)dx+(x^2 + x^6 + 6 x y^5 + 6 x^5 y^5)dy.\]
 Consider its extended foliation $\mathcal{F}^{\PP^2}$ to $\mathbb{P}^2$. Its canonical sheaf is $\mathcal{K}_{\mathcal{F}^{\PP^2}}=\mathcal{O}_{\mathbb{P}^2}(9)$ and its dicritical configuration, $\mathcal{B}_{\mathcal{F}^{\PP^2}}=\{p_i\}_{i=1}^{36}$, consists of $36$ points, where $p_1, p_7$ and $p_i$, $i\geq 13$, belong to $\mathbb{P}^2$, $p_j$ belongs to the first infinitesimal neighbourhood of $p_{j-1}$ for all $j\in \{2,\ldots,6\}$ and for all $j\in \{8,\ldots,12\}$. The set of infinitely near terminal dicritical singularities (of cardinality $d=26$) is $\{p_6\}\cup \{p_i\}_{i=12}^{36}$.
  
The proximity relations among the points of ${\mathcal B}_{{\mathcal F}^{\PP^2}}$ and the equalities
$$[K_{\widetilde{\mathcal F}^{\PP^2}}-K_{Z_{{\mathcal F}^{\PP^2}}}]=12[L^*] -2\sum_{i=1}^6[E_i^*] -5[E_7^*] -2[E_8^*] -\sum_{i=9}^{11}[E_i^*] -2\sum_{i=12}^{36}[E_i^*],$$
$$[\widetilde{C}_{X}]=[L^*] -\sum_{i=1}^6[E_i^*],\; [\widetilde{C}_{Y}]=[L^*] - [E_1^*] - [E_7^*] -\sum_{i=33}^{36}[E_i^*]$$
$$\mbox{and}\; [\widetilde{C}_{Z}]=[L^*] - \sum_{i=7}^{12} [E_{i}^*]$$
\noindent allow us to deduce that $\Sigma=\{C_{X},C_{Y},C_{Z}\}$ is a restricted set of independent algebraic solutions of $\mathcal{F}^{\PP^2}$ of length $\sigma=3$. Hence, $\ell=d-\sigma-1=22$. Considering parameters $\alpha_1,\ldots,\alpha_{26}$ associated, respectively, with the infinitely near terminal dicritical singularities $p_{14}, p_{15}, \ldots,p_{36}$, $p_6,p_{12},p_{13}$ and expressing $\alpha_{23},\ldots,\alpha_{26}$ in terms of $\alpha_1,\ldots,\alpha_{22}$, one gets that $\alpha^\Sigma_{\mathcal{F}^{\PP^2}}=(\frac{1}{6},\ldots,\frac{1}{6})\in \mathbb{R}^{22}$ and
$$
T_{\alpha^\Sigma_{\mathcal{F}^{\PP^2}}}=L^* -\frac{1}{6}\sum_{i=1}^{36}E_i^*.
$$
Notice that $T_{\alpha^\Sigma_{\mathcal{F}^{\PP^2}}}^2=0$. Running Algorithm \ref{algoritmo3} for ${\mathcal F}^{\PP^2}$, $\Sigma$ and $g=10$, one obtains  that  
$$\frac{ XYZ^4 + Y^6}{XZ^5 + X^5 Z}$$
is a rational first integral of ${\mathcal F}^{\PP^2}$ of genus $10$ (whose algebraic invariant curves are given by the pencil $(\pi_{{\mathcal F}^{\PP^2}})_{*}|6T_{\alpha^\Sigma_{\mathcal{F}^{\PP^2}}}|$). This provides a rational first integral of \folc: $$\frac{ x y + y^6}{x + x^5}.$$
\end{example}
\medskip 

Our last example explains how to use the modification of Algorithm \ref{algoritmo3}, described in Corollary \ref{rr2}, to decide about the existence of a polynomial first integral of genus $g$.

\begin{example}\label{ex_ip3}
Let \folc be a polynomial foliation on $\C^2$ defined by the $1$-form \[
\omega:=(2x+4x^3 y^3)dx+(3y^2+3x^4y^2)dy.\]
Consider the extended foliation $\mathcal{F}^2$ to the Hirzebruch surface $\mathbb{F}_2$ given by the output of Algorithm \ref{algoritmo} with input $(2,\omega)$. Its canonical sheaf is $\mathcal{K}_{\mathcal{F}^2}=\mathcal{O}_{\fd}(3,2)$ and its dicritical configuration, $\mathcal{B}_{\mathcal{F}^2}=\{p_i\}_{i=1}^{22}$, is such that $p_1, p_{11}, p_{14}, p_{17}, p_{20}$ belong to the support of the divisor $C_{X_0}\cup C_{Y_0}$, $\{p_i\}_{i=1}^{10}$, $\{p_i\}_{i=11}^{13}$, $\{p_i\}_{i=14}^{16}$, $\{p_i\}_{i=17}^{19}$ and $\{p_i\}_{i=20}^{22}$ are chains and the unique satellite points are $p_3$ and $p_4$ (which are both proximate to $p_1$). The infinitely near terminal dicritical singularities are $p_{10},\,p_{13},\,p_{16},\,p_{19}$ and $p_{22}$. 

Here $\Sigma'=\{C_{X_0},C_{Y_0}\}$ is a set of $\mathcal{F}^2$-invariant curves and, from the proximity relations among the points of $\mathcal{B}_{\mathcal{F}^2}$ and the equalities
\begin{multline*}
[K_{\widetilde{\mathcal F}^2}-K_{Z_{{\mathcal F}^2}}]=3[F^*] +4[M^*] -4[E_1^*]-2\sum_{i=2}^4 [E_i^*]  -\sum_{i=5}^9 [E_i^*] -2 [E_{10}^*]- [E_{11}^*] -[E_{12}^*]\\
 -2[E_{13}^*] -[E_{14}^*] -[E_{15}^*] -2[E_{16}^*]-[E_{17}^*] -[E_{18}^*] -2[E_{19}^*] -[E_{20}^*] -[E_{21}^*] -2[E_{22}^*],
 \end{multline*}
$$[\widetilde{C}_{X_0}]=[F^*] -[E_1^*]\mbox{ and } \;[\widetilde{C}_{Y_0}]=[F^*] -[E_1^*] -[E_2^*] -[E_{11}^*] -[E_{14}^*] -[E_{17}^*] -[E_{20}^*],$$ 
it can be checked that $\Sigma=\{C_{X_0}\}\subset \Sigma'$ is a restricted set of independent algebraic solutions of $\mathcal{F}^2$ and $\Sigma'$ is not. Considering parameters $\alpha_1,\ldots,\alpha_{5}$ associated, respectively, to the infinitely near terminal dicritical singularities $p_{13},\,p_{16},\,p_{19},\,p_{22}$ and $p_{10}$, and expressing  $\alpha_{5}$ in terms of $\alpha_1,\ldots,\alpha_{4}$ (as explained before Proposition \ref{39}), one gets  that $\alpha^\Sigma_{\mathcal{F}^2}=(\frac{1}{3},\frac{1}{3},\frac{1}{3},\frac{1}{3})$ and
$$
T_{\alpha^\Sigma_{\mathcal{F}^2}}=\frac{2}{3}F^*+M^*-E_1^*-\frac{1}{3}\sum_{i=2}^{22}E_i^*.
$$
Since $T_{\alpha^\Sigma_{\mathcal{F}^2}}^2=0$, running the modification of Algorithm \ref{algoritmo3} given in Remark \ref{rr1}, for $g=5$, one obtains  that  
$$\frac{ X_1^2Y_0^3+X_0^8Y_1^3+X_0^4X_1^4Y_1^3}{X_0^2Y_0^3}$$
is a rational first integral of ${\mathcal F}^2$ of genus $5$ (whose algebraic invariant curves are given by the pencil $(\pi_{{\mathcal F}^2})_{*}|3T_{\alpha^\Sigma_{\mathcal{F}^2}}|$). This provides a polynomial first integral of \folc, which is $x^2+y^3+x^4y^3$.

\end{example}

\bibliographystyle{plain}
\bibliography{MIBIBLIO}

\begin{thebibliography}{10}

\bibitem{aut}
L.~Autonne.
\newblock Sur la th\'eorie des \'equations diff\'erentielles du premier ordre
  et du premier degr\'e.
\newblock {\em J. \'Ecole Polytech.}, 61:35--122; ibid. 62 (1892), 47--180,
  1891.

\bibitem{Beau}
A.~Beauville.
\newblock {\em Complex {A}lgebraic {S}urfaces}, volume~34 of {\em London Math.
  Soc. Student Texts}.
\newblock Cambridge University Press, Cambridge, second edition, 1996.

\bibitem{bost}
A.~Bostan, G.~Chèze, T.~Cluzeau, and J.A. Weil.
\newblock Efficient algorithms for computing rational first integrals and
  {D}arboux polynomials of planar polynomial vector fields.
\newblock {\em Math. Comp.}, 85:1393--1425, 2016.

\bibitem{Bru}
M.~Brunella.
\newblock {\em Birational geometry of foliations}.
\newblock IMPA Monographs. Springer, 2015.

\bibitem{ca-ca}
A.~Campillo and M.~Carnicer.
\newblock Proximity inequalities and bounds for the degree of invariant curves
  by foliations of $\mathbb{P}_{\mathbb{c}}^2$.
\newblock {\em Trans. Amer. Math. Soc.}, 349 (9):2211 -- 2228, 1997.

\bibitem{cam-car-gar}
A.~Campillo, M.~Carnicer, and J.~Garc\'{\i}a de~la Fuente.
\newblock Invariant curves by vector fields on algebraic varieties.
\newblock {\em J. London Math. Soc.}, 62:56 -- 70, 2000.

\bibitem{car}
M.~Carnicer.
\newblock The {P}oincar\'{e} problem in the nondicritical case.
\newblock {\em Ann. Math.}, 140:289--294, 1994.

\bibitem{Cas}
E.~Casas-Alvero.
\newblock {\em {S}ingularities of {p}lane {c}urves}, volume 276 of {\em London
  Math. Soc. Lect. Notes Ser.}
\newblock Cambridge Univ. Press, 2000.

\bibitem{c-l}
V.~Cavalier and D.~Lehmann.
\newblock On the {P}oincar\'e inequality for one-dimensional foliations.
\newblock {\em Compositio Math.}, 142:529--540, 2006.

\bibitem{ce-li}
D.~Cerveau and A.~Lins-Neto.
\newblock Holomorphic foliations in $\mathbb{C} \mathbb{P}(2)$ having an
  invariant algebraic curve.
\newblock {\em Ann. Inst. Fourier}, 41 (4):883--903, 1991.

\bibitem{cha-lli}
J.~Chavarriga and J.~Llibre.
\newblock Invariant algebraic curves and rational first integrals for planar
  polynomial vector fields.
\newblock {\em J. Differ. Equ.}, 169 (1):1--16, 2001.

\bibitem{cheze}
G.~Ch\`eze.
\newblock Computation of {D}arboux polynomials and rational first integrals
  with bounded degree in polynomial tyme.
\newblock {\em J. Complexity}, 27:246--262, 2011.

\bibitem{Cox}
D.~A. Cox.
\newblock The homogeneous coordinate ring of a toric variety.
\newblock {\em J. Algebraic Geom.}, 4(1):17--50, 1995.

\bibitem{dar}
G.~Darboux.
\newblock M\'emoire sur les \'equations diff\'erentielles alg\'ebriques du
  premier ordre et du premier degr\'e ({M}\'elanges).
\newblock {\em Bull. Sci. Math.}, 32:60--96; 123--144; 151--200, 1878.

\bibitem{TesJulio}
J.~García de~la Fuente.
\newblock {\em Geometría de los sistemas lineales de series de potencias en
  dos variables}.
\newblock PhD thesis, 1989 (in Spanish).

\bibitem{dua}
L.~G.~S. Duarte, S.~E.~S. Duarte, L.~A. C.~P. da~Mota, and J.~E.~F. Skea.
\newblock Solving second-order ordinary differential equations by extending the
  {P}relle-{S}inger method.
\newblock {\em J. Phys. A}, 34(14):3015, apr 2001.

\bibitem{d-l-a}
F.~Dumortier, J.~Llibre, and J.C. Art\'es.
\newblock {\em Qualitative theory of planar differential systems}.
\newblock UniversiText. Springer-Verlag, New York, 2006.

\bibitem{es-kl}
E.~Esteves and S.~Kleiman.
\newblock Bounds on leaves of one-dimensional foliations.
\newblock {\em Bull. Braz. Math. Soc.}, 34(1):145--169, 2003.

\bibitem{fergal1}
A.~Ferragut, C.~Galindo, and F.~Monserrat.
\newblock A class of polynomial planar vector fields with polynomial first
  integral.
\newblock {\em J. Math. Anal. Appl.}, 430:354--380, 2015.

\bibitem{fergia}
A.~Ferragut and H.~Giacomini.
\newblock A new algorithm for finding rational first integrals of polynomial
  vector fields.
\newblock {\em Qual. Theory Dyn. Syst.}, 9:89--99, 2010.

\bibitem{GalMonPerC2022}
C.~Galindo, F.Monserrat, and E.~P\'erez-Callejo.
\newblock Algebraic integrability of planar polynomial vector fields by
  extension to {H}irzebruch surfaces.
\newblock {\em Qual. Theory Dyn. Syst.}, 21(126), 2022.

\bibitem{GalMon2006}
C.~Galindo and F.~Monserrat.
\newblock Algebraic integrability of foliations of the plane.
\newblock {\em J. Differ. Equ.}, 231(1):611--632, 2006.

\bibitem{GalMon2010}
C.~Galindo and F.~Monserrat.
\newblock On the characterization of algebraically integrable plane foliations.
\newblock {\em Trans. Amer. Math. Soc.}, 362:4557--4568, 2010.

\bibitem{GalMon2014}
C.~Galindo and F.~Monserrat.
\newblock The {P}oincaré problem, algebraic integrability and dicritical
  divisors.
\newblock {\em J. Differ. Equ.}, 256(1):3614--3633, 2014.

\bibitem{GalMonOliv}
C.~Galindo, F.~Monserrat, and J.~Olivares.
\newblock Foliations with isolated singularities on {H}irzebruch surfaces.
\newblock {\em Forum Math.}, 33(6):1471--1483, 2021.

\bibitem{Har}
R.~Hartshorne.
\newblock {\em Algebraic Geometry}, volume~52 of {\em Graduate Texts in
  Mathematics}.
\newblock Springer-Verlag, New York, 1977.

\bibitem{hew}
C.~Hewitt.
\newblock Algebraic invariant curves in cosmological dynamical systems and
  exact solutions.
\newblock {\em Gen. Relativity Gravitation}, 23:1363--1384, 1991.

\bibitem{Joua}
J.P. Jouanolou.
\newblock {\em Equations de Pfaff Algébriques}, volume 708 of {\em Lecture
  Notes in Math.}
\newblock Springer, New York, 1966.

\bibitem{l-n}
A.~Lins-Neto.
\newblock Some examples for the {P}oincar\'e and {P}ainlev\'e problems.
\newblock {\em Ann. Sc. \'Ec. Norm. Sup.}, 35:231--266, 2002.

\bibitem{lli2}
J.~Llibre and G.~Rodr\'{\i}guez.
\newblock Configuration of limit cycles and planar polynomial vector fields.
\newblock {\em J. Differ. Equ.}, 198:374--380, 2004.

\bibitem{lli}
J.~Llibre and G.~\'Swirszcz.
\newblock Relationships between limit cycles and algebraic invariant curves for
  quadratic systems.
\newblock {\em J. Differ. Equ.}, 229:529--537, 2006.

\bibitem{man2}
Y.-K. Man and M.~A.~H. MacCallum.
\newblock A rational approach to the {P}relle-{S}inger algorithm.
\newblock {\em J. Symb. Comp.}, 24:31--43, 1997.

\bibitem{pai}
P.~Painlev\'e.
\newblock {\em ``Sur les int\'egrales alg\'ebriques des \'equations
  diff\'erentielles du premier ordre" and ``M\'emoire sur les \'equations
  diff\'erentielles du premier ordre"}.
\newblock Ouvres de Paul Painlev\'e, Tome II. \'Editions du Centre National de
  la Recherche Scientifique 15, quai Anatole-France, 75700, Paris, 1974.

\bibitem{pere}
J.V. Pereira.
\newblock On the {P}oincar\'e problem for foliations of the general type.
\newblock {\em Math. Ann.}, 323:217--226, 2002.

\bibitem{PerSva}
J.V. Pereira and R.~Svaldi.
\newblock Effective algebraic integration in bounded genus.
\newblock {\em Algebraic Geom.}, 6:454--485, 2019.

\bibitem{poi11}
H.~Poincar\'e.
\newblock M\'emoire sur les courbes d\'efinies par une \'equation
  diff\'erentiellle (i).
\newblock {\em J. Math. Pures Appl.}, 7:375--442, 1881.

\bibitem{poi12}
H.~Poincar\'e.
\newblock M\'emoire sur les courbes d\'efinies par une \'equation
  diff\'erentiellle (ii).
\newblock {\em J. Math. Pures Appl.}, 8:251--296, 1882.

\bibitem{poi13}
H.~Poincar\'e.
\newblock M\'emoire sur les courbes d\'efinies par une \'equation
  diff\'erentiellle (iii).
\newblock {\em J. Math. Pures Appl.}, 1:167--244, 1885.

\bibitem{poi21}
H.~Poincar\'e.
\newblock Sur l'int\'egration alg\'ebrique des \'equations diff\'erentielles du
  premier ordre et du premier degr\'e (i).
\newblock {\em Rend. Circ. Mat. Palermo}, 5:161--191, 1891.

\bibitem{pr-si}
M.J. Prelle and M.F. Singer.
\newblock Elementary first integrals of differential equations.
\newblock {\em Trans. Amer. Math. Soc.}, 279:215--229, 1983.

\bibitem{sch}
D.~Scholomiuk.
\newblock Algebraic particular integrals, integrability and the problem of the
  centre.
\newblock {\em Trans. Amer. Math. Soc.}, 338:799--841, 1993.

\bibitem{Seiden}
A.~Seidenberg.
\newblock Reduction of singularities of the differentiable equation ${A} dy =
  {B} dx$.
\newblock {\em Amer. J. Math.}, 90:248--269, 1968.

\bibitem{serrano}
F.~Serrano.
\newblock Fibered surfaces and moduli.
\newblock {\em Duke Math. J.}, 67:407--421, 1992.

\bibitem{soa2}
M.~Soares.
\newblock Projective varieties invariant for one-dimensional foliations.
\newblock {\em Ann. Math.}, 152:369--382, 2000.

\bibitem{Walcher}
S.~Walcher.
\newblock On the {P}oincar\'e problem.
\newblock {\em J. Differ. Equ.}, 166:51--78, 2000.

\bibitem{zam1}
A.G. Zamora.
\newblock Foliations in algebraic surfaces having a rational first integral.
\newblock {\em Publ. Mat.}, 41:357--373, 1997.

\bibitem{zam2}
A.G. Zamora.
\newblock Sheaves associated to holomorphic first integrals.
\newblock {\em Ann. Inst. Fourier}, 50:909--919, 2000.

\end{thebibliography}

\end{document}